\documentclass[a4paper,10pt,reqno]{amsart}
\usepackage{amscd,amssymb,graphicx,color,epigraph}

\usepackage[alphabetic,initials]{amsrefs}

\usepackage[a4paper, top=1in, bottom=1in, left=1.2in, right=1.2in]{geometry}

\usepackage{amssymb}
\usepackage{amsthm}
\usepackage{calligra}
\usepackage{bm}
\usepackage{latexsym}
\usepackage{amsmath}
\usepackage[dvipsnames]{xcolor}
\usepackage[T1]{fontenc}
\usepackage[colorlinks=true, linkcolor=Bittersweet, citecolor=blue, urlcolor=teal]{hyperref}
\usepackage[capitalize,nameinlink]{cleveref}
\usepackage{mathrsfs}
\usepackage{quiver}
\usepackage{tikz-cd}

\usepackage[all,cmtip]{xy}

\usepackage{listings}
\usepackage[center]{caption}
\theoremstyle{plain}
\usepackage[nodisplayskipstretch]{setspace}
\usepackage{ stmaryrd }

 \numberwithin{equation}{section}

\newtheorem{theorem}{Theorem}[section]
\newtheorem{proposition}[theorem]{Proposition}

\newtheorem{Question}[theorem]{Question}
\newtheorem{lemma}[theorem]{Lemma}

\newtheorem{corollary}[theorem]{Corollary}

\newtheorem{conjecture}[theorem]{Conjecture}

\newtheorem{example}[theorem]{Example}

\newtheorem*{procedure}{Riso-stratification procedure}

\theoremstyle{definition}

\newcommand{\appsection}[1]{\let\oldthesection\thesection
\renewcommand{\thesection}{Appendix \oldthesection}
\section{#1}\let\thesection\oldthesection}

\newtheorem{definition}[theorem]{Definition}

\theoremstyle{remark}

\newtheorem{remark}[theorem]{Remark}

\DeclareMathOperator{\spec}{Spec}

\DeclareMathOperator{\Hom}{Hom}

\DeclareMathOperator{\HomA}{\mathscr{H}\text{\kern -3pt {\calligra\large om}}\,}

\def\Z{{\mathbb{Z}}}

\def\A{{\mathbb{A}}}

\def\O{{\mathcal{O}}}
\def\m{{\mathfrak{m}}}

\DeclareMathOperator{\Spec}{Spec}

\def\arrow#1{\mathop{\longrightarrow}\limits^{#1}}
\makeatletter
\providecommand{\leftsquigarrow}{%
  \mathrel{\mathpalette\reflect@squig\relax}%
}
\newcommand{\reflect@squig}[2]{%
  \reflectbox{$\m@th#1\rightsquigarrow$}%
}
\makeatother

\title{Functorial stratifications
of singularities\\ in characteristic 0}

\author[Vicente Monreal]{Vicente Monreal}
\email{vicente.monreal@hhu.de}
\address{Mathematisches Institut, Heinrich-Heine-Universit\"at D\"usseldorf, Germany.}

\begin{document}

\begin{abstract}
We prove that the riso-stratifications of singularities of affine algebraic sets introduced by Bradley-Williams--Halupczok are embedding independent and can be computed \'etale locally. Building on this, we upgrade their procedure, originally formulated in non-archimedean and model-theoretic language, into a sharp canonical and functorial stratification process for schemes of finite type over fields of characteristic 0. Our work enables comparisons with classical approaches to singularities and makes these stratifications available without requiring familiarity with the original methods.
\end{abstract}

\maketitle

\section{Introduction}

The notion of stratifying a scheme $X$ of finite type over a field refers to a decomposition of $X$ into a disjoint union of locally closed subschemes which are smooth and of pure dimension. Conceptually, a stratification should detect the smooth locus $X^\mathrm{sm}$ and reflect how singular each point is within the singular locus. 

A given object usually admits more than one stratification. Determining a notion of well behaved stratification that allows canonical constructions is a central problem in singularity theory. This is difficult both because there are several candidates for what well behaved should mean (Whitney's conditions \cite{W65}, Verdier's regularity \cite{V76}, compatibility with classical algebraic invariants \cite{Z65}), and because canonical constructions are rarely available, even when the desired properties are fixed. This problem is particularly important in the context of resolution of singularities, where stratifications play a central role in the resolution algorithm introduced by Hironaka over fields of characteristic 0 \cite{Hir64}. Here, stratifications are needed for selecting the smooth centers of the blow-ups defining the algorithm. The canonicity of the stratification is what makes the algorithm itself canonical, and sharpness of the stratification can be converted into both termination and functoriality of the resolution process. The classical solution to this requires, at each step of selecting a blow-up center, a choice of a hypersurface of maximal contact. The canonicity problem raised by these choices was addressed by Villamayor \cite{V89} and Bierstone–Milman \cite{BM97}. Further refinements of functoriality of the process have been obtained by Encinas–Hauser \cite{EH02} and Włodarczyk \cite{W05} in the classical setting, and by Abramovich–Temkin–Włodarczyk \cite{ATW24}, who replaced ordinary blowups by stack-theoretic weighted blow-ups. However, all of these constructions still depend, directly or indirectly, on the notion of maximal contact. This remains the principal obstruction to extending resolutions to positive characteristic \cite{H10}. 

Some approaches to resolutions try to avoid stratifications altogether. The most prominent heuristic is the Nash blow-up proposed by Nash \cite{N95} and Semple \cite{S54}, which is known to resolve curves and surfaces when iterated with normalization \cite{S90}, and some further families in higher dimensions \cite{GT14}. However, this method was recently shown to fail in dimensions four and higher \cites{C25,C26}. The problem of producing canonical stratifications therefore seems unavoidable.

A recent line of work has approached the problem of canonical stratifications in characteristic 0 from a completely different direction. Using non-archimedean and model-theoretic techniques, in particular cell decomposition in valued fields, Bradley-Williams and Halupczok \cite{BWH25} introduced a stratification process for algebraic subsets of a given affine space $\A_k^n$ (or more generally, definable objects in a suitable language, e.g. semialgebraic sets). For a fixed embedded object $X\subset \A_k^n$, their process uniquely determines a stratification of $X$ which they name the ``riso-stratification of $X\subset \A_k^n$''. Riso-stratifications of embedded objects are strictly finer than any minimal Whitney stratification, and capture motivic information not visible to classical regularity conditions. In this paper, we prove that such stratifications are embedding independent and can be computed \'etale locally. From this, we obtain an improved version of them in global contexts:   
\begin{theorem}[Theorems \ref{thm: propiedades de riso-strat}, \ref{thm: riso-strat are funcotiral} and \ref{Prop: Speading properties}]\label{thm: A}
    Let $X$ be a reduced scheme of pure dimension which is of finite type over a field $k$ of characteristic 0. There is a canonical sequence of subschemes of $X$:
    $$\mathcal{S}_\bullet(X/k)=\{\mathcal{S}_d(X/k)\}_{0\leq d\leq \dim X},$$
    determining what we call the riso-stratification of $X$ over $k$. The following properties and regularity conditions are satisfied by this decomposition $\mathcal{S}_\bullet(X/k)$: 
    \begin{enumerate}
        \item[\textbf{(1)}] It is a stratification. Namely, $X=\bigsqcup_{0\leq d\leq \dim X}\mathcal{S}_d(X/k),$ each finite union $\mathcal{S}_{\leq d}(X/k)$ is a closed subscheme of $X$ and each stratum $\mathcal{S}_{d}(X/k)$ is either empty or a smooth subscheme of $X$ of pure dimension $d$. In particular, $\mathcal{S}_{\dim X}(X/k)=X^\mathrm{sm}$.\vspace{0.5ex}
        
         \item[\textbf{(2)}] It satisfies Whitney conditions, in the sense of \cite{BWH25}*{4.6.4}. In particular, multiplicities are constant along connected components of each stratum \cites{T73,T77,T82}.
         \vspace{0.5ex}

        \item[\textbf{(3)}] It captures motivic information of $X$. More precisely, local motivic Poincar\'e series are trivial along each stratum in the sense of \cite{BWH25}*{5.4.4(2)}.
        \vspace{0.5ex}

        \item[\textbf{(4)}] It is compatible with base change along smooth morphisms. That is, for every smooth morphism $f\colon Y\to X$ over $k$ of relative dimension $N$, we have $\mathcal{S}_{r}(Y/k)=\varnothing$ for $r<N$ and: 
        $$ \mathcal{S}_{N+d}(Y/k)=f^{-1}\mathcal{S}_d(X/k),$$
        for every $0\leq d\leq \dim X$. In particular, riso-stratifications over $k$ commute with \'etale localizations.
        \vspace{0.5ex}

        \item[\textbf{(5)}] It is compatible with base field extensions. That is, for every field extension $F/k$ we have:
        $$\mathcal{S}_d(X_F/F)=\mathcal{S}_d(X/k)\times_k\spec(F),$$
        for every $0\leq d\leq \dim X$, where $X_F=X\times_k\spec(F)$ is regarded as a scheme of finite type over $F$.
        \vspace{0.5ex}

        \item[\textbf{(6)}] It has good spreading properties. More precisely, suppose  there is a domain $R$ with $\mathrm{Frac}(R)=k$ and a reduced scheme $\tilde{X}$ of finite presentation over $R$ such that $X\simeq \tilde{X}\times_R\spec(k)$. Then, there are constructible subschemes $\mathcal{S}_d(\tilde{X})\subset \tilde{X}$ uniquely determined by $\tilde{X}$ such that:$$\mathcal{S}_d(X/k)=\mathcal{S}_d(\tilde{X})\times_R\spec(k) ,$$
        for each $0\leq d\leq \dim X$.
    \end{enumerate}
\end{theorem}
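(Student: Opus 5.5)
The plan is to build $\mathcal{S}_\bullet(X/k)$ by gluing local riso-stratifications. The central technical input is an embedding-independence statement for the affine construction of \cite{BWH25}.

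\textbf{Affine case and embedding independence.} For an affine reduced $X$ with two closed embeddings $X\hookrightarrow \A^n_k$ and $X\hookrightarrow\A^m_k$, I compare both with the diagonal embedding into $\A^{n+m}_k$. It therefore suffices to treat an embedding $X\subset \A^n_k$ and its composite with a graph embedding $\A^n_k\hookrightarrow\A^{n+m}_k$, $x\mapsto (x,g(x))$. The riso-stratification of \cite{BWH25} is defined by iteratively removing the locus where the set is not ``risometrically trivial'' along a $d$-dimensional space of directions. I expect to show that the graph map, being a polynomial isomorphism onto its image, transports risometries to risometries over a neighbourhood of each point, after rescaling valuative balls. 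Concretely, $g$ is Lipschitz with a first-order Taylor expansion, and the riso-triviality condition of \cite{BWH25} only sees the leading-order behaviour on small balls.

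I expect this to be the main obstacle. Risometries are only required to preserve the leading term of differences, and a general polynomial map distorts distances. The rescue is that riso-triviality is tested on tangent cones (the ``rainbow''/translation spaces in \cite{BWH25}), and a graph map induces a linear isomorphism on these. I would first prove that the translation spaces $\mathrm{tsp}$ are transformed by $d(\mathrm{id},g)$, and then run induction on strata from the top dimension down.

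\textbf{Étale locality and globalization.} Next I prove compatibility with étale maps $U\to X$ between affines. Via the valued-field description, an étale map induces an analytic (Hensel) isomorphism of small valuative neighbourhoods of points. Riso-triviality is local in this valuative topology, so the strata pull back. Product with $\A^N$ is handled by a direct check that the extra coordinates are trivial directions, so $\mathcal{S}_{N+d}(X\times\A^N)=\mathcal{S}_d(X)\times\A^N$. Since a smooth morphism is étale locally $X\times\A^N\to X$, this gives (4). Embedding independence and étale locality then let the affine strata glue on any affine open cover, which defines $\mathcal{S}_\bullet(X/k)$ canonically.

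\textbf{Remaining properties.} Properties (1)--(3) are local, so I deduce them from the corresponding results of \cite{BWH25}*{4.6.4, 5.4.4}, together with the fact that the top stratum is the smooth locus. For (5), riso-conditions are first-order definable in the language of valued fields over $k$. Quantifier elimination for henselian valued fields of residue characteristic 0 then implies that the defining formulas are stable under passing to $F$, so the strata base change. For (6), I spread the $k$-definable formulas over a finitely generated subring and use constructibility to obtain $\mathcal{S}_d(\tilde X)$. Uniqueness comes from defining these subschemes fiberwise at all points of $\spec R$ via (5), applied to residue fields.
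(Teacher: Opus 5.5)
Your overall plan (do the affine case, prove embedding independence and \'etale locality, handle products with $\A^N$, glue, and cite \cite{BWH25} for (1)--(3)) is viable, and it is close in spirit to the paper's. However, the step that carries all the weight is misidentified, and several lemmas it needs are missing.

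Your first instinct, first-order Taylor expansion, is the right one, and the ``main obstacle'' you name is not real. For $g$ with coefficients in $\kappa$ and $\alpha,\beta\in B_{>0}(z)$ one has $g(\alpha)-g(\beta)=\sum_i(\alpha_i-\beta_i)h_i(\alpha,\beta)$ with $h_i(\alpha,\beta)\equiv\partial_i g(z)$ modulo $\mathfrak{m}_{\O_K}$. Hence $\Phi(\alpha)-\Phi(\beta)\sim_{\mathrm{rv}}L(\alpha-\beta)$ at every scale, where $\Phi=(\mathrm{id},g)$ and $L=(\mathrm{id},Dg(z))$. So the graph map is a risometry composed with a $\kappa$-linear injection, and it even preserves valuative distances.

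The ``rescue'' you propose instead is false: riso-triviality is not detected by tangent cones. For the cusp $y^2=x^3$ the cone is invariant under $\{y=0\}$, yet the riso-triviality space is $0$ (Example~\ref{ex: cusp}); Theorem~\ref{Prop: invariant image in the cone} gives only a necessary condition.

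What you actually need, and do not supply, is the following:
\begin{itemize}
\item A riso-triviality space of a set lying in a linear subspace $V$ is contained in $V$ and can be computed inside $V$ (Lemma~\ref{almost-preserve-directions}).
\item Straighteners defined only on the set extend to risometries of the whole ball, as the definition in \cite{BWH25} requires (Lemma~\ref{Lemma: Risometrias-extendibles}).
\item For products, the inequality $\mathrm{rtd}(\mathcal{A}\times\A^N)\le\mathrm{rtd}(\mathcal{A})+N$ is not a direct check. It needs additivity of riso-triviality plus slicing a straightener adapted to the projection (Proposition~\ref{Prop: Improved riso}, Corollary~\ref{Cor: riso-triv en familias triviales}).
\end{itemize}

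For \'etale maps, ``locality in the valuative topology'' is beside the point. The bijection $B_{U,u}\to B_{X,x}$ does not come from an injective map of ambient balls. It is rv-linear only because leading terms of differences of arcs on $U$ lie in $T_uU$, where the differential is injective. This is exactly Proposition~\ref{Prop: Projection is risometry}. The paper uses it to get formal-germ invariance (Corollary~\ref{Cor: rtd invariante formal}), and then treats every smooth morphism at once via $\widehat{\O_{Y,y}}\simeq\widehat{\O_{Z,z}}\llbracket l_1,\dots,l_N\rrbracket$.

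Finally, the stratification is not built top-down by removing loci. Each shadow step recomputes riso-triviality of $X$ together with all earlier shadow strata. So your comparison must be proved for tuples and inducted over the shadow index $s$, as in Theorem~\ref{Thm: Local Riso-Strat Commutes}.

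For (5), your quantifier-elimination argument only gets started once you cite the definability theorem of \cite{BWH25}. Riso-triviality quantifies over arbitrary risometries, so it is not first-order a priori. The paper simply takes a tower $K/\kappa/F/k$, notes $(X_F)_\kappa(K)=X_\kappa(K)$, and uses independence of $K$.

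For (6), your uniqueness argument fails. Property (5) concerns field extensions of $k$. The residue fields $\kappa(\mathfrak{p})$ of $R$ at $\mathfrak{p}\neq 0$ are not extensions of $k$, and they may have positive characteristic (e.g.\ $R=\Z$), where no riso-stratification is available. Moreover, spreading formulas over a finitely generated subring of $k$ does not produce canonical subschemes of the given $\tilde X$, since a constructible subset of $\tilde X$ is far from determined by its generic fiber. The paper instead runs the procedure of \cite{BWH25}, with constants for $R$ itself, directly on $\tilde X\subset\A^n_R$. It then recovers $\mathcal{S}_d(X/k)$ by scalar extension along $R\subset k$ (Corollary~\ref{cor: riso-strat no depende delembedding}).
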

We point out that local properties (1) to (3) were already described by Bradley-Williams--Halupczok in their work. Similarly, properties (5) and (6) essentially follow from the definability results already obtained in the embedded context. On the other hand, property (4) and the globalization of the previous are due to our reconstruction of the stratification process.

The local stratification process underlying Theorem \ref{thm: A} is complicated and, for the moment, can only be performed following its original formulation. That is, we need to appeal to some heavy model-theoretic results to even define riso-stratifications. Let us briefly describe this in the embedded situation, leaving a more detailed discussion for the later section \S2. If $X\subset \A_k^n$ is an affine scheme of pure dimension over a field of characteristic 0, its riso-stratification is computed by first considering a base change of the form: 
$$X_K=X\times_k\spec(K) \subset \A_K^n,$$
where $K$ is a suitable valued field extending $k$ (see Definition \ref{def:admissible}). Upstairs, the non-archimedean techniques involving \emph{risometries} (which play the same role as isometries in classical analytic situations) are used to measure the singularities of $X_K$ via the invariants called \emph{riso-triviality spaces}. These invariants are used to decompose $X_K(K)\subset K^n$ into sets whose regularity properties are inductively refined through the \emph{shadow iteration process}. This iterative construction outputs a decomposition $(S_d)_{0\leq d\leq \dim X}$ of $X_K(K)$ into constructible sets (this is highly non-trivial, since every notion involved is formulated in terms of analytic information captured by the valuation of $K$ and the additive structure of the ambient space $\A_K^n$). These constructible sets $S_d$ are proved to descend back to constructible subsets of $X$ over $k$, which define a stratification of $X$ that is independent of the chosen valued field $K$. This output is precisely $\mathcal{S}_\bullet (X/k)$.

Our main results rely on a proof of embedding independence for the riso-stratification process, and this requires, in particular, making sense of the procedure in algebro-geometric terms. For example, the original definition of \emph{riso-triviality spaces} uses a model-theoretic language that obscures their intrinsic nature. We reconstruct them with a perspective that allows us to regard them as invariants of formal germs of singularities. If $k$ is a field of characteristic 0, $X$ is a reduced scheme of finite type over $k$ and $x\in X$ is a closed point, then the riso-triviality space of $X$ at $x$ is a canonical $\kappa(x)$-linear subspace of the tangent space of $X$ at $x$ (see Definition \ref{Def: rtsp of x} and Corollary \ref{Cor: rtdsp independent of K}):
$$ \mathrm{rtsp_x}(X)\subset T_xX.$$
The first step in the riso-stratification process of $X$ is determined by the level sets of riso-triviality dimensions:
$$\mathrm{rtd}_x(X):=\dim_{\kappa(x)}\mathrm{rtsp}_x(X)\in \mathbb{Z}_{\geq 0},$$ as $x$ ranges over closed points of $X$ (it is still not known if these level sets already compute the riso-stratification in algebraic situations. See Definition \ref{def: riso-strat} and Question \ref{Question 1}).

We define riso-triviality spaces in arbitrary characteristic and find the first algebraic constraint for them, expressed as a relation with the tangent cone. This is the main tool for the explicit computation of riso-stratifications of simple objects in Section \S6:

\begin{theorem}[Theorem \ref{Prop: invariant image in the cone} and Corollary \ref{Cor: rtdsp independent of K} ]\label{Thm: B}
    Let $X$ be a scheme of finite type over a field $k$ of arbitrary characteristic, $x\in X$ a closed point. Then, we have:
    $$ C_x(X)(k)+\mathrm{rtsp}_x(X)(k)= C_x(X)(k),$$
    where $C_x(X)$ denotes the tangent cone of $X$ at $x$. In particular, there is a canonical containment $\mathrm{rtsp}_x(X)\subset C_x(X)$ and a dimensional bound $\mathrm{rtd}_x(X)\leq \dim X.$
\end{theorem}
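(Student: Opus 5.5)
We have to guess the definition of the riso-triviality space. In the valued-field setting of \cite{BWH25}, the space $\mathrm{rtsp}_x(X)$ is essentially the set of directions $v$ such that $X$ is riso-trivial along $v$ near $x$. Concretely, there is a risometry $\varphi$ of a ball $B$ around $x$ that maps $X\cap B$ onto itself and is "translation by $v$" to first order. We will reformulate this on the associated graded: a risometry induces the identity on leading terms. Riso-triviality along $v$ then says that the leading-term object of $X$ at $x$, namely the tangent cone $C_x(X)$, is invariant under translation by $v$. So the plan is to prove the displayed identity
$$ C_x(X)(k)+\mathrm{rtsp}_x(X)(k)= C_x(X)(k)$$
by showing $C_x(X)$ is stable under adding vectors of $\mathrm{rtsp}_x(X)$. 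The containment and the dimension bound then follow formally.

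\textbf{Step 1.} Reduce to the embedded affine case $X\subset\A^n_k$ with $x=0$. Pass to an admissible valued field $K$ in the sense of Definition \ref{def:admissible}. Describe $C_0(X)(K)$ valuatively: it is the set of leading terms $\mathrm{lt}(y)=\lim t^{-1}y$ (in the residue/graded sense) of points $y\in X(K)$ close to $0$. Equivalently, it is the image of $X(K)\cap B$ in the graded ring $\bigoplus \m^i/\m^{i+1}$ under rescaling by $\lambda\in K^\times$ with $v(\lambda)\to\infty$. This is the standard valuative characterization of the tangent cone: $\mathrm{in}(f)(\mathrm{lt}(y))=0$ for every $f$ in the ideal. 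It works in any characteristic.

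\textbf{Step 2.} Take $v\in\mathrm{rtsp}_0(X)$ and $w\in C_0(X)$. Choose $y\in X(K)$ near $0$ with leading term $w$, and a scalar $\lambda$ with $v(\lambda)=v(y)$. Apply the translation-like risometry $\varphi$ witnessing riso-triviality along $v$ on the ball of radius $v(y)$. It moves $y$ to a point $y'\in X(K)$ with $y'-y-\lambda v$ of strictly higher valuation; this is exactly the defining property of a risometry along direction $v$. Then $\mathrm{lt}(y')=w+v$ (after normalizing), so $w+v\in C_0(X)$.

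\textbf{Step 3.} Conclude that $C_0(X)+\mathrm{rtsp}_0(X)\subseteq C_0(X)$. The reverse inclusion is trivial since $0\in \mathrm{rtsp}$. Taking $w=0\in C_0(X)$ gives $\mathrm{rtsp}\subset C_0(X)$. Because the tangent cone has dimension $\dim_x X\le\dim X$, the linear subspace has $\mathrm{rtd}_x(X)\le\dim X$. Descent from $K$ to $k$ points uses that both sides are defined over $k$, via Corollary \ref{Cor: rtdsp independent of K}.

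\textbf{Main obstacle.} The hard part is Step 2. One must match the precise definition of riso-triviality, which involves risometries defined on balls at all small radii, with the leading-term map. In particular one needs a single $\lambda$-scaled translation witnessing $v$ at the radius $v(y)$, and this must also cover points $y$ approaching $0$ along all valuations. I would first try to use the definition with a ball containing $y$ centered at $0$, together with the fact that a risometry fixes $0$ up to higher order.
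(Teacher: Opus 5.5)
Your proposal is correct and is essentially the paper's proof: your leading-term map is the paper's $\psi$, whose image is exactly $C_x(X)(\kappa)$ by Proposition~\ref{Prop: RV Determines Reduced Cone}, and the surjectivity you call standard is proved there via the blow-up at $x$ and arcs through the exceptional divisor. Likewise, the translation-like self-map of $X\cap B$ you need is $\varphi^{-1}\circ\tau_{\lambda v}\circ\varphi$ for a $W$-straightener $\varphi$ on $B_{X,x}$, which is precisely how the paper produces the arc $\varphi^{-1}\big(\varphi(\alpha)+t^{v(\alpha)}w\big)$; the only difference is cosmetic, since normalizing an arbitrary arc by a scalar $\lambda$ with $v(\lambda)=v(y)$ lets you skip the paper's RV-closure $\overline{\mathcal{B}}$ of $\kappa\llbracket t\rrbracket$-arcs, as the initial-form computation places every such normalized leading term in the cone whatever the value group.
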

\begin{remark}
    This is the first evidence of riso-triviality spaces being well behaved in characteristic $p>0$. In this context, it is unclear whether the level sets of $\mathrm{rtd}_x(X)$ are constructible, since we cannot appeal to model-theoretic results.  
\end{remark}
\begin{remark}
    This result is conceptually motivated by the results of Garc\'ia-Ramirez about t-stratifications (the model-theoretic predecessors of riso-stratifications), and their relations to tangent cones in more analytic settings \cite{G17}.
\end{remark}

There are two main questions arising naturally from our work. The first one is:
\begin{Question}\label{Question Intro 1}
    Is it possible to describe riso-stratifications without appealing to model-theoretic techniques?
\end{Question}
By this we mean in particular understanding the \emph{shadow iteration process} in more depth (see Question \ref{Question 1} for a more technical phrasing of this). The second, and perhaps more important, question is whether riso-stratifications are compatible with existing algorithms for resolutions of singularities. More precisely:
\begin{Question}\label{Question Intro 2}
    Is it possible to resolve the singularities of a given $k$-scheme $X$ by repeatedly blowing-up regular centers of the form $\mathcal{S}_{\leq d}(X/k)$?
\end{Question}
Of course, just blowing-up the worst stratum of a Whitney stratification is known to fail in general when seeking algorithms of resolutions. However, it is not clear whether those situations can be overcome by blowing-up further finite unions of strata (e.g. Whitney umbrella). Alternatively, one can also use only the worst stratum by replacing ordinary blow-ups with stack-theoretic weighted blow-ups, as explained and performed by McQuillan \cite{McQuillan20}.

 Motivated by our results, we propose a conjecture that aims to give a partial answer to Question \ref{Question Intro 1} and a positive answer to Question \ref{Question Intro 2}. Let us recall some definitions before giving a precise statement:
\begin{definition}
    Let $X$ be a scheme of finite type over a field $k$ and $x\in X$ a closed point. The ridge invariant of $X$ at $x$ is defined by:
    $$\mathrm{Rid}_x(X):=\mathrm{Stab}_{T_xX}\big(C_x(X)\big).$$
Here the stabilizer construction captures non-reduced structure in a delicate manner and can be formalized in terms of representability of a functor, see \cite{BHM10}*{2.1}.
\end{definition}
Stratifying singularities in a way that is compatible with the ridge invariant is one of the key conceptual steps within Hironaka's process of resolution of singularities \cite{Hir70}. Indeed, Giraud's results \cites{Gir72,Gir74,Gi75} characterize the ridge as the tangent cone of a maximal contact variety.
\begin{conjecture}
    Let $X$ be a reduced scheme of finite type over a field $k$ of characteristic 0 and $x\in X$ a closed point. Then, we have a canonical inclusion $$\mathrm{rtsp}_x(X)\subset \mathrm{Rid}_x(X).$$
\end{conjecture}
\begin{remark}
Our Theorem \ref{Thm: B} establishes that $\mathrm{rtsp}_x(X)$ stabilizes 
$C_x(X)_\mathrm{red}$ inside $T_xX$. The conjecture asserts the stronger statement that $\mathrm{rtsp_x}(X)$ also stabilizes the non-reduced structure of $C_x(X)$.
\end{remark}
A proof of this conjecture for normal germs of singularities $x\in X$ will be the subject of forthcoming work.\vspace{1ex}

Our methods are characteristic independent. Therefore, any extension of the results of \cite{BWH25} to positive characteristic would be compatible with our improvements of the riso-stratifying procedure. However, extensions of this kind seem impossible from the model-theoretic side, and so a more promising path to determining whether Theorem \ref{thm: A} extends to positive characteristic is to find an alternative proof that does not appeal to model-theoretic arguments, and this in turn is essentially Question \ref{Question Intro 1}.\vspace{1ex}

The structure of the paper is as follows. In \S2 we fix conventions and put our work into context by giving an informal explanation of the original riso-stratification procedure. In \S3 we develop an algebraic version of the non-archimedean notions of RV-quotients, risometries and riso-triviality dimension invariants. In \S4 we establish some properties of riso-triviality dimensions: They are intrinsic, depend only on formal germs of singularities, and are obstructed by the tangent cone via Theorem \ref{Thm: B}. In \S5 we reconcile our setup with the original by showing that our notions of riso-triviality agree in a suitable sense. We then deduce embedding independence and smooth functoriality of the embedded riso-stratification process from the properties satisfied by our constructions, and culminate in a proof of Theorem \ref{thm: A}. In \S6 we compute explicit examples of riso-stratifications and raise two technical open questions.

\subsection*{Acknowledgments} We thank Hugo Zock, Otto Overkamp, Sebastián Muñoz-Thon, Roberto Villaflor and B\'arbara Ram\'irez for valuable discussions while preparing this document.
We thank Joaqu\'in Moraga, Mark Spivakovsky, Stefan Schröer and Kay Rülling for their feedback which helped improve the exposition of the paper. Special thanks to Immanuel Halupczok, who introduced the author to riso-stratifications and is currently supervising his PhD thesis project. This work was financially supported by the Deutsche Forschungsgemeinschaft, since the author is a member of the research training group GRK 2240: Algebro-Geometric Methods
in Algebra, Arithmetic and Topology.

\tableofcontents

\section{Preliminaries on Riso-Stratifications, Setup and Conventions}
 
The techniques underlying the riso-stratification process were developed systematically over the last decade \cites{BWH25,CH25,G20,Hal10,Hal11,Hal14,Hal14b,HalYin18}. In this paper we reconstruct a modified version of their geometric notions and invariants. Before doing so it is worth having in hand a conceptual picture of \emph{what} the riso-stratification of a scheme is and \emph{how} it is built, avoiding the model-theoretic formalism on which this was originally formulated. The purpose of this section is to provide such a picture, fix our conventions, and make precise the obstructions that arise when trying to translate this model-theoretic machinery to algebro-geometric situations. We refer to \cite{Hal23} for a more elementary introduction, closer to the original heuristic and containing further instructive examples.

\subsection{Risometries: A non-Archimedean Analog of Isometries}
Let $K$ be a valued field of characteristic 0 that is algebraically closed and spherically complete\footnote{We will only use the fact that spherically complete fields have a complete valuative topology, in the sense that every Cauchy sequence has a unique limit; see \cite{BWH25}*{2.2.6} for a formal definition.}, with valuation $v\colon K \to \Gamma \cup \{\infty\}$, valuation ring $\mathcal{O}_K$, and residue field $\kappa$ which is also a subfield of $K$ via a section of the residue map. We use additive notation for $v$.
 
The valuation endows the affine space $K^n$ with a non-archimedean notion of distance: The \emph{valuative distance} between two points $x,y \in K^n$ is the valuation $v(x-y)$ of their difference, where $v(x_1,\dots,x_n) := \min_i v(x_i)$. The closed ball of radius $\gamma \in \Gamma$ centered at $x$ is:
\[
  B_{\ge \gamma}(x) = \{\, x' \in K^n \mid v(x'-x) \ge \gamma \,\}.
\]
Taking these balls as a basis turns $K^n$ into a topological space. Because the valuative distance defines an ultrametric, every element of a ball is a center, every ball is at once open and closed, and the resulting topology on $K^n$ is totally disconnected. This is a genuine complication when one wishes to analyze an embedded algebraic set $X \subset K^n$, since the topological and analytic tools that are so effective over $\mathbb{R}$ or $\mathbb{C}$ cannot be replaced directly.
 
The standard non-archimedean approach to deal with this lack of structure is to improve the space itself. Rather than work with the bare set of points under a single valuation, it is possible to enlarge the input by admitting many valuations, or multiplicative seminorms, at once and equipping the geometric result with a structure sheaf, passing to the analytic spaces of Berkovich or the adic spaces of Huber.
 
The riso-approach proceeds along an entirely different line which allows the study of more general definable objects. Instead of enriching the space in question to restore good topological properties, it leaves $K^n$ untouched and seeks a suitable replacement for the analytic notion of \emph{isometry}, requiring compatibility with the model-theoretic techniques available over valued fields such as cell decomposition or its generalized version of Hensel minimality.
 
A first guess for a replacement would be to keep the analytic definition untouched, calling a map isometry in case it preserves valuative distances of the form $v(x-y)$. This is too coarse: This condition encodes order of vanishing of differences of points but not the direction along which two points separate. A way to fix this is to remember, alongside the valuation of differences, leading terms. This enriched datum is encoded in what we call the $\mathrm{RV}$-quotient of $K^n$, and produces the following genuine analog of isometries:

\begin{definition}[informal; cf.\ Definitions \ref{Def: Risometries} and \ref{def: old risometry}]
A \emph{risometry} is a bijection $\varphi$ between subsets of $K^n$ that preserves the $\mathrm{RV}$-class of every difference. That is, for all $x,y$ in its domain, $\varphi(x)-\varphi(y)$ has the same valuation \emph{and} the same leading term as $x-y$, equivalently
\[
  v\big(\varphi(x)-\varphi(y)-(x-y)\big) > v(x-y).
\]
\end{definition}
 
 The terminology is purely a choice of name: A risometry is an \emph{$\mathrm{RV}$-isometry}. This is the source of the prefix \emph{riso-} which is present in almost every notion built from this equivalence, namely riso-triviality, the riso-stratification, and the riso-tree of \cite{BWH25}. Translations are risometries, and risometries are stable under composition and inversion. This entire story is about how to work \emph{up to risometry}.
\subsection{Riso-triviality: Degree of Smoothness up to Risometries}
 
Our purpose is to measure singularities of algebraic subsets of $K^n$ by non-archimedean means. This requires us to first fix a local notion of smoothness. The geometric insight to be exploited is that, infinitesimally, a smooth point of an algebraic set looks like a linear subspace, and is in particular translation-invariant along as many directions as its dimension. This suggests measuring the local degree of smoothness of an algebraic set $Z\subset K^n$ at a point in relation to the additive structure of the ambient space, namely by the number of directions along which $Z(K)$ is translation-invariant on small valuative ball.

\begin{definition}[informal; cf. Definition \ref{def: translation invariant}]
Let $W \subset \kappa^n$ be a $\kappa$-linear subspace. A subset $A\subset K^n$ is \emph{$W$-translation-invariant on a valuative ball $B\subset K^n$} if $(A + W(K))\cap B = A$.
    
\end{definition}
 Demanding local translation-invariance to measure smoothness is, however, far too rigid. Even smooth algebraic subsets of $K^n$ are, in general, locally invariant under no translation. We therefore relax the definition by asking for local translation-invariance only \emph{up to risometry}.
 
\begin{definition}[informal; cf.\ Definitions \ref{def: riso-triviality} and \ref{def: old riso-triviality}]
A subset $A$ of a valuative ball $B\subset K^n$ is \emph{$W$-riso-trivial on $B$} if there is a risometry $\varphi\colon B \to B$ for which $\varphi(A)$ is $W$-translation-invariant on $B$. The \emph{riso-triviality dimension} of $A$ on $B$ is
\[
  \mathrm{rtd}_B(A) := \max\{\, \dim_\kappa W \mid A \text{ is $W$-riso-trivial on } B \,\}.
\]
\end{definition}
 
This notion can already be used to provide invariants measuring the singularities of geometric objects. Let $Z \subset \kappa^n$ be an algebraic set of pure dimension and $z \in Z(\kappa)$ a closed point. Restricting the set $Z(K)$ to the infinitesimal ball $B_{\geq 0}(z)\subset K^n$ and computing its riso-triviality dimension associates to the point $z$ a non-negative integer:
\[
  \mathrm{rtd}_z(Z) \in \mathbb{Z}_{\ge 0},
\]
the local degree of smoothness of $Z$ at $z$, measured up to risometry. It is a non-trivial fact that this attains its maximal value $\mathrm{rtd}_x(Z) = \dim Z$ at smooth points (Corollary \ref{cor: smooth implies riso-trivial}). However, it is still not known if this invariant alone detects algebraic smoothness.
 
\subsection{The Embedded Riso-Stratification Procedure}
 
Sorting the $\kappa$-points of $Z$ by the value of the riso-triviality dimension invariant produces a first partition called the \emph{first shadow of $Z$}. For each $0\leq d\leq \dim Z$ this first shadow is determining the set:
\[
  \mathrm{Sh}_d\big(Z\big)(\kappa) = \{\, z \mid \mathrm{rtd}_z(Z) = d \,\}\subset Z(\kappa).
\]

A first surprising result that comes from the model theoretic formalisms is constructibility of these $\kappa$-point sets. The first shadow of $Z$ is a good approximation to a stratification of its singularities, but not yet a stratification in the strict sense. The further obstruction is the \emph{locally closed} requirement on the strata. There is no immediate reason for the partial unions $\bigsqcup_{e \le d}\mathrm{Sh}_e$ to be Zariski closed. This is expected to be true for algebraic sets (see question \ref{Question 1}), but does not hold in the full generality of definable objects such as semi-algebraic ones, see \cite{BWH25}*{4.2.4}.
 
This first shadow construction can be refined by an inductive procedure, the \emph{iterated shadow}. In order to formalize this, one extends the definition of riso-triviality dimensions to handle tuples of sets. Then, by knowing that each stratum of a first shadow is constructible, it is possible to again decompose $Z(\kappa)$ using the level sets of the following riso-triviality dimensions:
\[\mathrm{rtd}_{B_{\geq 0}(z)}\big(Z(K),\mathrm{Sh}_0(Z)(K),\dots, \mathrm{Sh}_{\dim Z}(Z)(K)\big)\in \mathbb{Z}_{\geq 0},\]
as $z$ ranges in $Z(\kappa)$. This process can be iterated appealing to the model theoretic results that assert constructible strata at each step, see Definition \ref{def: shadow iteration}. Another non-trivial fact that comes from the model-theoretic world is that this iterative process stabilizes, and the resulting decomposition of $Z(\kappa)$ is precisely what we call the \emph{riso-stratification of $Z\subset K^n$}.
 
This already explains conceptually the riso-stratification procedure when working under a fixed embedding $Z\subset K^n$, for particularly well behaved valued fields $K$. This is rarely a geometric situation of interest. Nonetheless, the techniques can be extended to more general contexts. Let us describe more precisely the general stratifying procedure:
\begin{definition}\label{def:admissible}
A field extension $K/\kappa$ is \emph{admissible over a domain $R$} if:
\begin{itemize}
  \item $K$ is an algebraically closed, spherically complete valued field whose valuation ring $\mathcal{O}_K$, with maximal ideal $\mathfrak{m}_{\mathcal{O}_K}$, satisfies $\mathcal{O}_K/\mathfrak{m}_{\mathcal{O}_K} \simeq \kappa$, inducing a section of the residue map.
  
  \item The algebraically closed field $\kappa$ extends $\mathrm{Frac}(R)$.
\end{itemize}
\end{definition}
 
\begin{procedure} Let $R$ be a domain of characteristic 0 and $I\subset R[x_1,\dots,x_n]$ a finitely generated radical ideal corresponding to a subscheme $Z\subset \A^n_R$ of pure relative dimension over $R$. The riso-stratification of $Z$ with respect to this embedding, in the sense of \cite{BWH25}*{4}, is obtained by applying the following conceptual steps: ~\vspace{1ex}
\begin{enumerate}
    \item Choose an admissible field extension $K/\kappa$ over $R$. \vspace{0.5ex}
    
    \item Base change to obtain the embedding $Z_\kappa\subset \A_\kappa^n$. Perform the shadow iteration construction for the set $Z_\kappa(K)\subset K^n$. This returns uniquely determined constructible subsets $S_d\subset \A_\kappa^n$ stratifying $Z_\kappa$ with further regularity conditions.\vspace{0.5ex}

    \item Prove that the stratification $\{S_d\}_{0\leq d\leq \dim Z_\kappa}$ of $Z_\kappa$ is the base change of a unique stratification of $Z$ which does not depend on the choice made in step (1).
\end{enumerate}
The resulting  constructible sets $\{S_d\}_{0\leq d\leq \dim Z}$ are what we call the \emph{riso-stratification of }$Z\subset R^n$. When $R$ is a field, this is indeed a stratification satisfying further regularity conditions. 
\end{procedure}

This general procedure is based upon Hensel minimality and cell decomposition in valued fields, which are techniques previously developed in \cites{CH11,CHal14,CH20}. Some important remarks, that stress the relevance of these model-theoretic tools, is that Step (2) works without information about $R$. Similarly, step (3) depends only on the field $\kappa$ and not on the choice of a valued field $K$ for which $K/\kappa$ is admissible over $R$. 
 
\subsection{Drawbacks of the Original Formulation}
Despite having results that establish riso-stratifications as meaningful constructions for embedded algebraic sets, in the sense of compatibility with their intrinsic structure (Whitney conditions, motivic information), it is unclear in \cite{BWH25} whether these stratifications are embedding independent. In fact, the model theoretic language and geometric setup they use is incompatible with the mere question, which only makes sense after restricting our attention to algebraic sets rather than definable objects over more general languages. When trying to clarify this situation one encounters the following obstacles:

\begin{itemize}
    \item The RV-relation used to define risometry classes depends on a choice of origin. Even when working up to translations, one can check that this quotient is not compatible with localizations at $K$-points of $Z$ which do not factor through the \'etale stalk of a geometric point $x\in Z(\kappa)$.\vspace{0.5ex}
    
    \item The definition of risometry and riso-triviality depend on the additive structure of an ambient space, even after fixing the RV-quotient. As it was remarked by Halupczok \cite{Hal23}*{3.4.4}, this quotient does not retain the additive structure. This is to say, working up-to-risometries is sensitive to the dimension of the ambient space.\vspace{0.5ex}

    \item Any modified intrinsic construction of riso-triviality dimension invariants used to run step (2) of the riso-stratification procedure must lose information: The original construction recovers in particular the codimension of the embedding.
\end{itemize}
In what follows, we will deal with all these obstacles by redefining riso-triviality dimensions of subsets of $Z_\kappa(\O_K)$. That is, we will only use the restricted information recovered by the inclusion:
$$(\O_K)^n\subset K^n.$$
This has enough geometric content to see valuative balls $B\subset K^n$ of radius $\geq 0$ centered at $\O_K$-points, which will be sufficient for recovering the riso-stratification procedure. With this change of perspective, it is possible to define the valuative topology and RV-quotient in a more intrinsic way. This will be used to prove that riso-triviality dimension invariants are intrinsic to formal germs of singularities. 

\subsection*{Conventions}
 For a ring $R$, the term $R$-scheme will mean a reduced scheme which is of finite presentation over $R$ and of pure relative dimension. For $K/\kappa$ an admissible extension over $R$ and a $\kappa$-scheme $X$, we use the term $\O_K$-points of $X$ when referring to the set $X(\O_K):=\Hom_{k}(\spec \O_K,X).$

\section{Riso-Triviality Spaces and Dimensions}
In this section, we fix a field extension $K/\kappa$ which is admissible over a domain $R$ (of arbitrary characteristic) and develop a modified version of the riso-triviality dimension invariants introduced in \cite{BWH25} for studying algebraic subsets of $\A_\kappa^n(K)$. We use additive notation for the valuation $v\colon K\to \Gamma \cup \{\infty\}$.

\subsection{The Valuative Topology and its RV-structure}
Let $Z$ be a $\kappa$-scheme. The following definitions endow the set of $\O_K$-points of $Z$ with a valuative structure. 

\begin{definition}
    We say that $\alpha\in Z(\O_K)$ is an arc on $Z$. The \emph{base of $\alpha$} is the closed point of $Z$ defined by composition with the canonical morphism $\spec \kappa\to \spec \O_K$.  
\end{definition}

\begin{definition}
    For every $\gamma\in \Gamma$ consider the ideal $I_{>\gamma}:=\{f\in \O_K~|~v(f)> \gamma \}\subset \O_K$ and its associated quotient $\O_K^{\leq \gamma}=\O_K/I_{>\gamma}$. The \emph{base of} $\eta\in Z(\O_K^{\leq \gamma})$ is the $\kappa$-point of $Z$ defined by the image of the closed point of $\spec( \O_K^{\leq\gamma})$.
\end{definition}

\begin{definition}
     Let $\gamma\in\Gamma$. The \emph{$\gamma$-th truncation map}
     $\theta_\gamma\colon Z(\O_K)\longrightarrow Z\big(\O_K^{\leq\gamma}\big)$
     is the one induced by the quotient map $\rho_\gamma\colon \O_K\to \O_K^{\leq\gamma}$. These maps commute, and do not alter base points. More formally, for every $\gamma_1>\gamma_2\in\Gamma$ we have $I_{>\gamma_1}\subset I_{>\gamma_2}$ and the further ring quotient $\rho_{\gamma_2,\gamma_1}\colon \O_K^{\leq\gamma_2}\to \O_K^{\leq\gamma_1}$ induces a map  $\theta_{\gamma_1,\gamma_2}$ that makes the following diagram commute:
     \[\begin{tikzcd}
	{Z(\O_K)} & {Z\big(\O_K^{\leq\gamma_1}\big)} \\
	& {Z\big(\O_K^{\leq\gamma_2}\big)}
	\arrow["{\theta_{\gamma_1}}", from=1-1, to=1-2]
	\arrow["{\theta_{\gamma_2}}"', from=1-1, to=2-2]
	\arrow["{\theta_{\gamma_1,\gamma_2}}", from=1-2, to=2-2]
\end{tikzcd}\]
\end{definition}
\begin{remark}
    Truncation maps are very often not surjective over singular base points.   
\end{remark}
\begin{definition}
    Let $z\subset Z(\O_K)$ and $\gamma\in\Gamma$. The \emph{closed valuative ball on $Z$ of radius $\gamma$ centered at $z$} is 
    $$B_{Z,z}(\gamma):=\theta_\gamma^{-1}\circ\theta_\gamma(z).$$ As a convention, we write $B_{Z,z}=B_{Z,z}(0)$ for the ball consisting of all $\O_K$-points of $Z$ based at $z$. We say that $A\subset Z(\O_K)$ is \emph{infinitesimal} if it is composed of arcs having a same base point.
\end{definition}
Since $\O_K$-points factor through formal neighborhood of their base points, and such local arcs can be lifted uniquely \cite{Artin}, we have a canonical correspondence: 
$$B_{Z,z}\cong \Hom_{\text{loc }\kappa\text{-alg}}\big(\widehat{\O_{Z,z}},\O_K\big).$$
In this way, elements of the free $\kappa$-module $\kappa[B_{Z,z}]$ can be regarded as $\kappa$-linear functions in $\mathrm{Fun}_{\kappa}\big(\widehat{\O_{Z,z}},\O_K\big)$. We use this to extend the valuation of $\O_K$ to a function:
\begin{align*}
    v\colon \kappa[B_{Z,z}]&\longrightarrow \Gamma\\
    \rho&\longmapsto \min_{l\in \mathfrak{m}_z} v\big(\rho(l)\big),
\end{align*} where $\mathfrak{m}_{Z,z}$ denotes the associated finitely generated maximal ideal.
\begin{definition}\label{Def: Val-Distance}
    Let $\alpha,\beta\in Z(\O_K)$ be two arcs based at $z\in Z(\kappa)$. The \emph{valuative distance between $\alpha$ and $\beta$} is:
    $$v(\alpha,\beta):=v(\alpha-\beta)=\min_{l\in\m_{Z,z}}v\big(\alpha(l)-\beta(l)\big).$$
    We also define the valuative order of $\alpha$ by $v(\alpha):=v(\alpha,z)$.
\end{definition}
The assumption of $K$ being spherically complete says that every Cauchy sequence with respect to the valuative distance on $\O_K$-points of $Z$ has a unique limit. This notion will play a role via the following definition:
\begin{definition}
    A set $A\subset Z(\O_K)$ is \emph{valuatively closed} if it is closed under sequential limits. In particular, every closed valuative ball $B\subset Z(\O_K)$ is indeed valuatively closed.
\end{definition}
The objects which we will analyze during the following sections are the following:
\begin{definition}
    A \emph{$\O_K$-tuple of $Z$} is a finite tuple of sets of the form:
    $$ \mathcal{A}=\big(A_1,\dots,A_\rho\big),$$
    such that $A_i\subset Z(\O_K)$ for each $1\leq i\leq \rho$ and its support $\mathrm{supp}(\mathcal{A}):=\bigcup_{i}A_i$ is valuatively closed. We say that $\mathcal{A}$ is trivial if $\mathrm{supp}(\mathcal{A})=\varnothing$. If $f\colon C \to C' $ is a set theoretic function such that $\mathrm{supp}(\mathcal{A})\subset C$ and $C'\subset Z(\O_K)$, then we denote by $f(\mathcal{A})$ the $\O_K$-tuple of $Z$ consisting of elements $f(A)$ with $A\in\mathcal{A}$. For every $C\subset Z(\O_K)$ we denote by $\mathcal{A}|_C$ the $\O_K$-tuple composed of elements of the form $A\cap C$ with $A\in\mathcal{A}$.
\end{definition}

\begin{definition}\label{def: RV-relation}
    For a fixed $\kappa$-scheme $Z$ we define the \emph{rv-relation for arcs $\alpha,\beta\in Z(\O_K)$} as follows:
    $$\alpha\sim_{\mathrm{rv}} \beta \iff \underset{\displaystyle \big(v(\alpha,\beta)>v(\alpha)\big)~\vee~ (\alpha= \beta ).}{\text{ Both arcs have the same base point and}}$$
    We denote $\mathrm{RV}_{\O_K}(Z):=Z(\O_K)/\sim_{\mathrm{rv}}$, and the quotient map by $\mathrm{rv}_Z\colon Z(\O_K)\to \mathrm{RV}_{\O_K}(Z)$. This makes sense because of the following:
\end{definition}
\begin{lemma}\label{Lemma: RV-is-equivalence}
    The relation $\sim_\mathrm{rv}$ on $Z(\O_K)$ is an equivalence relation.
\end{lemma}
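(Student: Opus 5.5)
We need reflexivity, symmetry and transitivity of $\sim_{\mathrm{rv}}$. The plan rests on the ultrametric inequality for the valuative distance of Definition \ref{Def: Val-Distance}. For arcs $\alpha,\beta,\gamma$ based at the same point $z$, and each $l\in\m_{Z,z}$, we have $\alpha(l)-\gamma(l)=(\alpha(l)-\beta(l))+(\beta(l)-\gamma(l))$ in $\O_K$. Taking valuations and then minimizing over $l$ gives
$$v(\alpha,\gamma)\geq \min\{v(\alpha,\beta),v(\beta,\gamma)\}.$$
The distance is also symmetric, since $v(-x)=v(x)$. Note that $v(\alpha)=v(\alpha,z)$, where $z$ denotes the constant arc, which sends every $l\in\m_{Z,z}$ to $0$. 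Hence $v(\alpha)=\min_l v(\alpha(l))$.

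\textbf{Reflexivity.} This holds by the clause $\alpha=\beta$. (The strict inequality alone may fail, since $v(\alpha,\alpha)=\infty$ while $v(\alpha)$ may also be $\infty$ when $\alpha=z$.)

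\textbf{Symmetry.} Suppose $\alpha\neq\beta$ and $v(\alpha,\beta)>v(\alpha)$. We must show $v(\beta,\alpha)>v(\beta)$, and the key step is to prove $v(\beta)=v(\alpha)$. Apply the ultrametric inequality to $\beta,\alpha,z$: this gives $v(\beta)\geq\min\{v(\alpha,\beta),v(\alpha)\}=v(\alpha)$. Conversely, $v(\alpha)\geq \min\{v(\alpha,\beta),v(\beta)\}$. If this minimum were $v(\alpha,\beta)$, we would get $v(\alpha)\geq v(\alpha,\beta)>v(\alpha)$, a contradiction. So the minimum is $v(\beta)$, and $v(\alpha)\geq v(\beta)$. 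Thus $v(\alpha)=v(\beta)$, and then $v(\beta,\alpha)=v(\alpha,\beta)>v(\alpha)=v(\beta)$.

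\textbf{Transitivity.} Suppose $\alpha\sim\beta$ and $\beta\sim\gamma$; all three arcs share the base point. If any two of them coincide the claim is immediate, so assume the relevant inequalities hold. By the argument above, $v(\alpha)=v(\beta)=v(\gamma)$. The ultrametric inequality then gives $v(\alpha,\gamma)\geq\min\{v(\alpha,\beta),v(\beta,\gamma)\}>v(\alpha)$, unless $\alpha=\gamma$. Either way $\alpha\sim\gamma$.

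The only subtle point is that $v(\alpha)$ could be $\infty$, i.e.\ $\alpha$ is the constant arc at $z$. In that case the strict inequality $v(\alpha,\beta)>\infty$ is impossible, so the class of $z$ is $\{z\}$, and all cases are consistent with this. I would also check that the minimum over the possibly infinite set $\m_{Z,z}$ is attained, so that $v$ is well defined. This follows by reducing to a finite set of generators $l_1,\dots,l_r$ of $\m_{Z,z}$: any $l=\sum a_i l_i$ with $a_i\in\widehat{\O_{Z,z}}$ satisfies $v(\alpha(l)-\beta(l))\geq\min_i v(\alpha(l_i)-\beta(l_i))$. This holds because $\alpha$ and $\beta$ are ring maps to $\O_K$ and $\alpha(a_i)-\beta(a_i)$ is controlled similarly. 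I expect this generator reduction to be the only step requiring care; the rest is formal.
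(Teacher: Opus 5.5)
Your proof is correct and follows essentially the paper's route: the strong triangle inequality forces related distinct arcs to have the same order $v(\alpha)=v(\beta)$, and the ultrametric inequality then gives transitivity. The paper gets this by fixing $l\in\m_{Z,z}$ with $v(\alpha(l))=v(\alpha)$ and comparing $\alpha(l)$ with $\beta(l)$, whereas you apply the inequality to the distance function with the constant arc $z$ as third point, which is the same computation.

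Your closing attainment check is not needed for the lemma, since the paper builds attainment into the definition of $v$. Also, ``controlled similarly'' is circular as written. If you want to keep the check, take coordinates $x_1,\dots,x_n$ of an affine chart centred at $z$. The set of $a\in\O_{Z,z}$ with $v(\alpha(a)-\beta(a))\geq\min_j v(\alpha(x_j)-\beta(x_j))$ contains $\kappa$ and the $x_j$, and is closed under sums, products and inverses of units, hence is all of $\O_{Z,z}$.
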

\begin{proof}
    Reflexivity follows from the definition. Let $\alpha,\beta\in Z(\O_K)$ be two different arcs, suppose $\alpha\sim_\mathrm{rv}\beta$ and let $z\in Z(k)$ be the common base point. Consider the finitely generated maximal ideal $\mathfrak{m}_z\subset \O_{Z,z}$ and take $l\in \mathfrak{m}_z$ be such that $v(\alpha(l))=v(\alpha)$. The strong triangle inequality implies
    $v\big(\alpha(l)-\beta(l)\big)\geq \min\big\{v\big(\alpha(l)\big),v\big(\beta(l)\big)\big\}$ and equality cannot hold because of the rv-condition. Thus, $v(\alpha(l))= v(\beta(l))$ which implies $ v(\alpha)\geq v(\beta)$ and we have symmetry. For transitivity, the same argument says that $\alpha\sim_\mathrm{rv}\beta$ and $\beta\sim_\mathrm{rv}\varepsilon$ implies $v(\alpha)=v(\beta)=v(\varepsilon)$ and the result follows.
\end{proof}
\begin{example}\label{ex: RV-quot and affine embeddings}
  Consider $\A_\kappa^n:=\Spec(\kappa[x_1,\dots,x_n])$. Once we have chosen coordinates, we get a set theoretic identification that describes the additive group structure of $\A_\kappa^n(\O_K)$:
  $$\A_\kappa^n(\O_K)\simeq (\O_K)^n.$$ 
  This can be used to check that the rv-relation recovers directional information with respect to base points. For example, for a pair of arcs $\alpha=(r_1,\dots,r_n)$ and $\beta=(r'_1,\dots,r'_n)$ based at the origin, we have:
    $$ \alpha\sim_\mathrm{rv}\beta \iff r_i \equiv r'_i \mod{I_{v(\alpha)}}~\forall i.$$
\end{example}
Consider an embedding of $\kappa$-schemes $Z_1\hookrightarrow Z_2$. The induced set-theoretic injection on arcs is compatible with truncation maps, i.e. for every $\gamma\in\Gamma$ we have a commutative diagram:
\[\begin{tikzcd}
	{Z_1(\O_K)} & {Z_2(\O_K)} \\
	{Z_1\big(\O_K^{\leq\gamma}\big)} & {Z_2\big(\O_K^{\leq\gamma}\big)}
	\arrow[hook, from=1-1, to=1-2]
	\arrow["{\theta_\gamma}"', from=1-1, to=2-1]
	\arrow["{\theta_\gamma}", from=1-2, to=2-2]
	\arrow[hook, from=2-1, to=2-2]
\end{tikzcd}\]
This implies preservation of valuative distances, in particular each valuative ball of $Z_1$ is the restriction of a valuative ball of $Z_2$. We will repeatedly make use of this concept, and so we fix some notation for it:
\begin{definition}
    Let $Z_1\hookrightarrow Z_2$ be an embedding of $\kappa$-schemes and $B\subset Z_1(\O_K)$ a valuative ball. The \emph{restriction of $B$ to $Z_2$} is the valuative ball $B|_{Z_2}=B\cap Z_2(\O_K)$ of $Z_2$, we also say that $B$ is a lift of the valuative ball $B|_{Z_2}$.
\end{definition}
Embeddings are also compatible with RV-quotients. i.e. the following diagram commutes:
\[\begin{tikzcd}
	{Z_1(\O_K)} & {Z_2(\O_K)} \\
	{\mathrm{RV}_{\O_K}(Z_1)} & {\mathrm{RV}_{\O_K}(Z_2)}
	\arrow[hook, from=1-1, to=1-2]
	\arrow["{\mathrm{rv}_{Z_1}}"', from=1-1, to=2-1]
	\arrow["{\mathrm{rv}_{Z_2}}", from=1-2, to=2-2]
	\arrow[hook, from=2-1, to=2-2]
\end{tikzcd}\]

This notion is particularly interesting when applied to affine embeddings $Z\hookrightarrow\A^n_\kappa$, since it concludes that part of the directional information featured by $Z(\O_K)$ in the ambient space is already contained in $\mathrm{RV}_{\O_K}(Z)$. 

\begin{example}
    Suppose $\kappa\llbracket t\rrbracket\hookrightarrow\O_K$ in a compatible way with the valuation of $\O_K$ and consider $Z=\spec(\kappa[x,y]/(y-x^2))$. This induces $Z(\kappa\llbracket t\rrbracket)\subset Z(\O_K)$ and we have a set theoretic identification: $$Z(\O_K)\simeq \{\big(a,b\big)\in (\O_K)^2~|~ a^2=b\}.$$
    In this way, the valuative order of an arc in $Z(\kappa\llbracket t\rrbracket)$ based at the origin is just the $t$-adic order of its $x$-coordinate. Thus, for every pair of arcs $(a,b),(c,d)\in B_{Z,0_{\A^2}}\cap Z(\kappa\llbracket t\rrbracket)$ we have:\begin{align*}
        (a,b)\sim_\mathrm{rv}(c,d)&\iff a\text{ and }c\text{ have the same }t\text{-adic order and leading $\kappa$-coefficient.}
    \end{align*}
    
\end{example}

\subsection{Risometries and Riso-Triviality}
In this section we define the central notion of risometries, which allows the construction of riso-triviality dimension invariants associated to $\O_K$-tuples of $\A_\kappa^n$. For this purpose, we fix a presentation $\A_\kappa^n=\spec(\kappa[x_1,\dots,x_n])$ and consequently a group scheme isomorphism $\A_\kappa^n\simeq\mathbb{G}_a^n$.
 \begin{definition}\label{def: translation invariant}
    Let $\mathcal{A}$ be a $\O_K$-tuple of $\A_\kappa^n$, $B$ a valuative ball of $\A_\kappa^n$ and $W\subset \A_\kappa^n$ a $\kappa$-linear subspace. We say $\mathcal{A}$ is \emph{$W$-translation-invariant on $B$} if $\mathcal{A}|_B$ is a non-trivial $\O_K$-tuple such that:
    $$A+B_{W,0}(\gamma)=A ,$$
    for every $A\in \mathcal{A}|_B$, where $\gamma\in\Gamma$ is the radius of $B$.
\end{definition}

\begin{definition}\label{Def: Risometries}
    Let $A,A'\subset \A_\kappa^n(\O_K)$ be infinitesimal subsets. A set-theoretic bijection $\varphi\colon A\to A'$ is a \emph{risometry}, if for every $a,b\in A$:
    $$\mathrm{rv}\big(\varphi(a)-\varphi(b)\big)=\mathrm{rv}(a-b).$$
    This comparison makes sense since both $a-b$ and $\varphi(a)-\varphi(b)$ are arcs based at the origin of $\A_\kappa^n$. 
\end{definition}
\begin{remark}
    Risometries preserve valuative distances. Translations are risometries. Compositions and inverses of risometries are risometries. A function with infinitesimal domain that preserves rv-classes of differences is automatically a risometry to its image. 
\end{remark}
\begin{definition}\label{def: riso-triviality}
Let $\mathcal{A}$ be a $\O_K$-tuple of $\A_\kappa^n$ and $B$ a valuative ball. We say that $\mathcal{A}$ is \emph{$W$-riso-trivial on $B$}, if there exists a risometry $\varphi\colon \mathrm{supp}(\mathcal{A}|_B)\to C$, with $C\subset B$, such that $\varphi(\mathcal{A}|_B)$ is $W$-translation-invariant on $B$. A risometry $\varphi$ satisfying this condition will be called a \emph{$W$-straightener of $\mathcal{A}$ on $B$}. The \emph{riso-triviality dimension of $\mathcal{A}$ on $B$} is:
$$\mathrm{rtd}_{B}(\mathcal{A}):= \max_{\underset{\mathcal{A}\textit{ is $V$-riso-trivial}}{V\subset \A_\kappa^n}} \dim_\kappa V .$$
We say that $A\subset \A_\kappa^n(\O_K)$ is $W$-riso-trivial on $B$ when the singleton $\{A\}$ is.
\end{definition}
\begin{remark}
    The riso-triviality dimension of a $\O_K$-tuple $\mathcal{A}$ does not depend on the group isomorphism $\A_\kappa^n(\O_K)\simeq (\O_K)^n$.
\end{remark}
\begin{remark}
    A $\O_K$-tuple $\mathcal{A}$ being $W$-riso-trivial on a valuative ball $B\subset\A_\kappa^n(\O_K)$ is a strictly stronger condition than each one of its elements being $W$-riso-trivial on $B$.
\end{remark}
\begin{example}
    Consider the parabola $Z=\spec (\kappa[x,y]/(y-x^2))$, the $\kappa$-linear subspace $W=\spec(\kappa[y])\subset \A_\kappa^2$ and the valutive ball $B=B_{\A_\kappa^2,0}$. We claim that $Z(\O_K)$ is $W$-riso-trivial on $B$: The linear projection to $W$ induces a set-theoretic map $\pi_W\colon Z(\O_K)\to W(\O_K)$. This restricts to a bijection:
    \begin{align*}
        \varphi\colon B_{Z,0}&\longrightarrow B_{W,0}\\
        \alpha&\longmapsto \pi_W(\alpha).
    \end{align*}
    It is easy to check that this bijection is a risometry: From the identification $\A^n_{\kappa}(\O_K)\simeq (\O_K)^2$ each arc $\alpha\in B_{Z,0}$ can be regarded as a tuple of the form $\alpha=(\alpha_1,\alpha_1^2)$ such that $v(\alpha_1)>0$. In this way, for every pair of arcs $\alpha,\beta\in B_{Z,z}$ we have:
    \begin{align*}
        v\big(\varphi(\alpha)-\varphi(\beta)-\alpha+\beta\big)&=v\big( (\alpha_1,0)-(\beta_1,0)-(\alpha_1,\alpha_1^2)+(\beta_1,\beta_1^2)\big)\\
        &=v\big(\alpha_1^2-\beta_1^2\big)\\
        &>v(\alpha-\beta).
    \end{align*}
    This is to say, $\varphi(\alpha)-\varphi(\beta)\sim_\mathrm{rv}\alpha-\beta$ as arcs of $B_{\A_\kappa^2,0}$ and so $\varphi$ is a riometry.  
\end{example}
We will very frequently restrict our attention to $\O_K$-tuples $\mathcal{A}$ whose support is defined on $\kappa$-linear subspaces of $\A_\kappa^n$. In this setting, it is important to detect that the riso-triviality dimension invariant is compatible with restriction to subspaces: 
\begin{lemma}\label{almost-preserve-directions}
Let $V,W\subset\A_\kappa^n$ be $\kappa$-linear subspaces and $\mathcal{A}$ a $\O_K$-tuple of $V(\O_K)\subset \A_\kappa^n(\O_K)$. If $\mathcal{A}$ is $W$-riso-trivial on a valuative ball $B$ of $\A_\kappa^n$, then $W\subset V$. In particular, $\mathcal{A}$ is also $W$-riso-trivial on the valuative ball $B|_V$ of $V$ and we have $\mathrm{rtd}_{B}(\mathcal{A})=\mathrm{rtd}_{B|_V}(\mathcal{A})$.
\end{lemma}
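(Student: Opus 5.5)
The plan is to show $W\subset V$ by exploiting that a risometry moves points only by a strictly smaller amount than their mutual distance. Once $W\subset V$ is known, the remaining claims follow by unwinding definitions.

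Let $\varphi\colon \mathrm{supp}(\mathcal{A}|_B)\to C\subset B$ be a $W$-straightener, and let $\gamma$ be the radius of $B$. Since $\mathcal{A}|_B$ is non-trivial, we may pick $a\in \mathrm{supp}(\mathcal{A}|_B)\subset V(\O_K)$ and set $a'=\varphi(a)$. By $W$-translation invariance, $a'+B_{W,0}(\gamma)\subset \mathrm{supp}(\varphi(\mathcal{A}|_B))$.

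Suppose $W\not\subset V$, and choose a nonzero $w\in W(\kappa)\setminus V(\kappa)$. For $t\in\O_K$ with $v(t)=\gamma'>\gamma$, the point $a'+tw$ is $\varphi(b)$ for some $b\in \mathrm{supp}(\mathcal{A}|_B)\subset V(\O_K)$. The risometry condition gives $\mathrm{rv}(b-a)=\mathrm{rv}(tw)$. Concretely, $v(b-a-tw)>v(tw)=\gamma'$, using $v(w)=0$ since $w$ is a nonzero $\kappa$-point. Now $b-a\in V(\O_K)$, because $V$ is a linear subspace. Choose a $\kappa$-linear form $\ell$ vanishing on $V$ with $\ell(w)=1$. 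Then $\ell(b-a-tw)=-t$ has valuation $\gamma'$. On the other hand, $\ell$ has coefficients in $\kappa$, so $v(\ell(b-a-tw))\geq v(b-a-tw)>\gamma'$. This is a contradiction, so $W\subset V$.

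The main point to check is that such $t$ exist and that the computation takes place inside the ball. I would instead use $t$ with $v(t)=\gamma$ itself, which exists because $\gamma\in\Gamma$ is a value of $K$, and $B_{W,0}(\gamma)$ contains $tw$. With this choice the argument goes through directly. The only subtlety is confirming that $\mathrm{rv}$ of arcs based at the origin of $\A^n_\kappa$ is the coordinatewise leading-term relation of Example~\ref{ex: RV-quot and affine embeddings}. The proof also uses that the valuative distance on $V(\O_K)$ is the restriction of the one on $\A_\kappa^n(\O_K)$, which holds by the compatibility of embeddings with truncations.

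For the remaining statements, given $W\subset V$, the same $\varphi$ has image in $B\cap V(\O_K)$. This holds because $\varphi(\mathcal{A}|_B)$ is $W$-translation invariant and its points lie in $a'+$(something). More carefully, we need $C\subset V$. To get this, I would first compose $\varphi$ with the linear projection $\pi\colon \A^n_\kappa\to V$ along a complement $V'\supset$ chosen so that $\pi$ is defined over $\kappa$. The map $\pi$ commutes with translation by $W\subset V$, preserves non-strict valuation bounds, and fixes $V$. Hence $\pi\circ\varphi$ is still a risometry: $v(\pi(\varphi(a)-\varphi(b))-(a-b))=v(\pi(\varphi(a)-\varphi(b)-(a-b)))>v(a-b)$. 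Its image is $W$-translation invariant on $B|_V$. Balls of $V$ are restrictions of balls of $\A^n_\kappa$ with the same radius, and $B_{W,0}(\gamma)$ is the same in both. Therefore $\mathcal{A}$ is $W$-riso-trivial on $B|_V$. Conversely, a $W$-straightener on $B|_V$ is one on $B$, since $W\subset V$, which gives the equality of riso-triviality dimensions.
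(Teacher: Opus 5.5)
Your proof is correct and follows the paper's argument for $W\subset V$. The paper translates so that $0$ is a fixed point, reduces to $\A_\kappa^n\simeq V\oplus W$, and gets a contradiction from $a\in V$, $\varphi(a)\in W\setminus\{0\}$ and $v(a-\varphi(a))>v(a)$; this is the same rv-incompatibility you extract from the pair $a,b$ with $\varphi(b)-\varphi(a)=tw$ and a form $\ell$ vanishing on $V$. Your projection $\pi\circ\varphi$ also proves the ``in particular'' part, which the paper leaves implicit, and it is needed there because $\varphi$ itself may leave $V$.

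Keep your first choice $v(t)>\gamma$ rather than switching to $v(t)=\gamma$. With the paper's truncation definition, $B_{W,0}(\gamma)=\theta_\gamma^{-1}\theta_\gamma(0)$ consists of arcs of valuation $>\gamma$, so $tw$ with $v(t)=\gamma$ need not lie in that ball.
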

\begin{proof}
By contradiction, suppose $\mathcal{A}$ is $W$-riso-trivial on a valuative ball $B$ of $\A_\kappa^n$ and $W\not\subset V$. We can reduce the analysis to the case $W\cap V=\{0_{\A_\kappa^n}\}$, $\mathcal{A}|_B=\mathcal{A}$ and $\A_\kappa^n\simeq V\oplus W$ where both are proper subspaces. Let $\varphi\colon \mathrm{supp}(\mathcal{A})\to C$ be a $W$-straightener of $\mathcal{A}$ on $B$, and let $A\in\mathcal{A}$. Up to translations both infinitesimal sets $A,\varphi(A)$ contain the fixed point $0_{\A_\kappa^n}$. Set linear coordinates $l_1,\dots,l_n$ to $\A_\kappa^n$ such that $W$ is defined by the vanishing of the first $r$ of them. For $a\in A$ such that $\varphi(a)\in W$ is non-trivial, then:
\begin{align*}
    v(a)&=\min_{1\leq i\leq r}v\big(l_i(a) \big)\\
    &=\min_{1\leq i\leq r}v\big(l_i\big(a-\varphi(a)\big) \big)\\
    &\geq v\big(a-\varphi(a)\big).
\end{align*}
This contradicts the risometry condition $v\big(a-\varphi(a)\big)>v(a)$.
\end{proof}

\subsection{Adapted Risometries and Riso-Triviality Spaces}

One of the most important properties of riso-triviality is that it is additive, in terms of its compatibility with operations on $\kappa$-linear subspaces of $\A_\kappa^n$. This notion is the underlying reason of existence of riso-triviality spaces associated to $\O_K$-tuples, which geometrize the construction of the riso-triviality dimension invariant. We will make sense of this via developing adapted risometries, which will also be relevant when describing algebraic obstructions to riso-triviality later on.

 We will recurrently use some translation functions for the following proofs, let us fix the notation: For $\alpha\in\A_\kappa^n(\O_K)$ define the translation by $\alpha$ as the function:\begin{align*}
      \tau_\alpha\colon \A_\kappa^n(\O_K)&\longrightarrow\A_\kappa^n(\O_K)\\
      \beta&\longmapsto \beta+\alpha~.
  \end{align*}

\begin{lemma}\label{Lemma: Improved-Risometries}
    If a $\O_K$-tuple $\mathcal{A}$ of $\A_\kappa^n$ is $W$-riso-trivial on a valuative ball $B\subset \A_\kappa^n(\O_K)$, then there exists a $W$-straightener $\varphi\colon \mathrm{supp}(\mathcal{A}|_B)\to C$, such that the following diagram commutes:
    \[\begin{tikzcd}
	\mathrm{supp}(\mathcal{A}|_B) & C \\
	{W(\O_K)}
	\arrow["\varphi", from=1-1, to=1-2]
	\arrow["{\pi_W}"', from=1-1, to=2-1]
	\arrow["{\pi_W}", from=1-2, to=2-1]
\end{tikzcd}\]
    Where $\pi_W$ is any linear projection to the subspace $W\subset \A_\kappa^n$. If a risometry $\varphi$ satisfies this condition, then we say it is a $W$-straightener of $\mathcal{A}$ on $B$ adapted to $W$.
\end{lemma}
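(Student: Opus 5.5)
Starting from an arbitrary $W$-straightener $\psi\colon \mathrm{supp}(\mathcal{A}|_B)\to C'$ of $\mathcal{A}$ on $B$, I would correct $\psi$ along $W$ so that it commutes with $\pi_W$. Fix a linear complement $U$ of $W$, so that $\pi_W$ is the projection along $U$, and write $\pi_U=\mathrm{id}-\pi_W$. For $a\in \mathrm{supp}(\mathcal{A}|_B)$ define
$$\varphi(a):=\pi_W(a)+\pi_U\big(\psi(a)\big).$$
Equivalently, $\varphi(a)=\psi(a)+w(a)$ with $w(a):=\pi_W\big(a-\psi(a)\big)\in W(\O_K)$. By construction $\pi_W\circ\varphi=\pi_W$, so the diagram in the statement commutes.

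\textbf{Step 1: $\varphi$ preserves rv-classes of differences.} For $a,b$ in the domain,
$$\varphi(a)-\varphi(b)-(a-b)=\pi_U\big(\psi(a)-\psi(b)-(a-b)\big).$$
The projection $\pi_U$ is $\kappa$-linear with coefficients in $\kappa\subset\O_K$, so it does not decrease valuation. Since $\psi$ is a risometry, the right-hand side therefore has valuation $>v(a-b)$. This gives $\mathrm{rv}(\varphi(a)-\varphi(b))=\mathrm{rv}(a-b)$. In particular $\varphi$ is injective, hence a risometry onto its image $C:=\varphi(\mathrm{supp}(\mathcal{A}|_B))$. I also need $C\subset B$. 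Because $v(\varphi(a)-a)\geq v(\psi(a)-a)$ and $\psi(a)\in B$, we get $v(\varphi(a)-a)\ge\gamma$ with $\gamma$ the radius of $B$, so $\varphi(a)\in B$.

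\textbf{Step 2: $W$-translation-invariance is preserved.} This is the main point. Each $\varphi(A)$ is obtained from $\psi(A)$ by moving every point $\psi(a)$ by the vector $w(a)\in W(\O_K)$. Since $w(a)=\pi_W(a-\psi(a))$, we have $v(w(a))\ge \gamma$, so $w(a)\in B_{W,0}(\gamma)$. Now $\psi(A)$ is a union of cosets $\psi(a)+B_{W,0}(\gamma)$, and the shifted point $\psi(a)+w(a)$ lies in the same coset. Hence $\varphi(A)\subset \psi(A)$.

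\textbf{Step 3: the reverse inclusion.} I claim $\varphi(A)=\psi(A)$, i.e. that the map is surjective onto each coset. I would argue via $\pi_U$. The map $\varphi$ satisfies $\pi_U\circ\varphi=\pi_U\circ\psi$ and $\pi_W\circ\varphi=\pi_W$. Take a point $c=\psi(a)+w'$ with $w'\in B_{W,0}(\gamma)$; I need some $a'\in A$ with $\pi_U\psi(a')=\pi_U\psi(a)$ and $\pi_W(a')=\pi_W(c)$. Here I would use the auxiliary map $\chi:=\pi_W\circ\psi^{-1}$ on $\psi(A)$, restricted to the fibre coset $\psi(a)+B_{W,0}(\gamma)$ and then projected to $W$. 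This gives a map from a ball of $W$ to $W$ that is a risometry, because the risometry condition for $\psi^{-1}$ projects under $\pi_W$.

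A risometry from a closed ball of $W$ to itself that preserves rv of differences should be surjective onto the ball. I would prove this by a Newton/Hensel-type successive approximation: at each stage correct the error using the rv-condition, which strictly increases the valuation of the error. The approximations then form a Cauchy sequence, which converges by spherical completeness. Valuative closedness of $\mathrm{supp}(\mathcal{A})$ ensures the limit stays in the domain, and closedness of $A$ is needed as well. I expect this surjectivity argument to be the main obstacle, especially since $\Gamma$ need not be discrete, so a plain sequence may not suffice and one may need to phrase it as a nested family of balls via spherical completeness.

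Once the surjectivity is established, $\varphi(A)=\psi(A)$ is $W$-translation-invariant for every $A\in\mathcal{A}|_B$. So $\varphi$ is an adapted $W$-straightener. Finally, changing the choice of projection $\pi_W$ only changes $U$, and the construction works for any complement, which handles "any linear projection".
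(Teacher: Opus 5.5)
Your construction is the same as the paper's. The paper post-composes a straightener $\varphi_0$ with $\tau\colon \varphi_0(\alpha)\mapsto \varphi_0(\alpha)+\pi_W(\alpha-\varphi_0(\alpha))$, which is exactly your $\varphi(a)=\pi_W(a)+\pi_U(\psi(a))$. Its valuation computation is your Step 1, phrased as $\tau$ being a risometry on $C'$ followed by composition.

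The difference is that the paper stops there and just asserts that $\tau\circ\varphi_0$ ``inherits'' the straightener property. You correctly observe that $\varphi(A)\subset\psi(A)$ and $\varphi(A)+B_{W,0}(\gamma)=\psi(A)$. So $W$-translation-invariance of $\varphi(A)$ is equivalent to $\varphi(A)=\psi(A)$, i.e. to surjectivity of $\chi=\pi_W\circ\psi^{-1}$ on each $W$-coset. Your Steps 2--3 therefore supply an argument the paper omits; compare the analogous ``non-trivial check'' the paper does carry out in Proposition \ref{Lemma: riso-triviality es aditiva}.

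Two adjustments make Step 3 clean.

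First, you do not need closedness of $A$, which is not among the hypotheses (only $\mathrm{supp}(\mathcal{A})$ is valuatively closed). Since $\psi(A)$ is $W$-translation-invariant on $B$, the whole coset $\psi(a)+B_{W,0}(\gamma)$ lies in $\psi(A)$. Hence $\chi$ is defined on the entire ball $D=\pi_W(B)$ of $W(\O_K)$, and it maps $D$ into $D$ because $\psi^{-1}$ lands in $A\subset B$. So the limit never leaves the domain.

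Second, replace the sequence argument (which genuinely fails, since $\Gamma$ is divisible here) by the fixed point theorem for strictly contracting self-maps of spherically complete ultrametric spaces. Given $y\in D$, apply it to $g(w)=w-\chi(w)+y$ on $D$. We have $g(D)\subset D$, and the rv-condition on $\chi$ gives $v(g(w)-g(w'))>v(w-w')$ for $w\neq w'$. The proof of that theorem is the nested-ball argument you anticipate. Set $D_w=\{z : v(z-w)\geq v(w-g(w))\}$. Then $w'\in D_w$ implies $D_{w'}\subseteq D_w$, and $D_{g(w)}\subsetneq D_w$ unless $g(w)=w$. A maximal chain of such balls has nonempty intersection by spherical completeness of $D$, which is inherited from $K$ since balls in $\O_K^m$ are products of balls of equal radius. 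Any point of that intersection is fixed by $g$. A fixed point $w^*$ satisfies $\chi(w^*)=y$, which gives $\varphi(A)=\psi(A)$ and completes the proof.
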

\begin{proof}
    Assume $\mathcal{A}|_B=\mathcal{A}$ and consider $\varphi_0\colon \mathrm{supp}(\mathcal{A})\to C'$ a $W$-straightener of $\mathcal{A}$ on the valuative ball $B$. Up to translations, we can assume $0\in \mathrm{supp}(\mathcal{A})$ and that it is a fixed point. We claim that the map $\tau\colon C'\to \A_\kappa^n(\O_K)$ defined by $\varphi_0(\alpha)\mapsto \tau_{\pi_W(\alpha-\varphi_0(\alpha))}\big(\varphi_0(\alpha)\big)$ is a risometry to its image $C$. Indeed, set $l_1,\dots,l_n$ linear coordinates of $\A_\kappa^n$ such that $W$ is defined by the vanishing of the first $r$ of them. The images under $\varphi_0$ of any pair of distinct arcs $\alpha,\beta\in A$ have a same base point, moreover:
    \begin{align*}
        v\big(\tau\big(\varphi_0(\alpha)\big)-\tau\big(\varphi_0(\beta)\big)-\varphi_0(\alpha)+\varphi_0(\beta)\big)&=v\big(\pi_W\circ\varphi_0(\alpha)-\pi_W\circ\varphi_0(\beta)-\pi_W(\alpha)+\pi_W(\beta)\big)\\
        &=\min_{i>r}v\big( l_i\big(\varphi_0(\alpha)\big)-l_i\big(\varphi_0(\beta)\big)-l_i(\alpha)+l_i(\beta)\big)\\
        &\geq v\big(\varphi_0(\alpha)-\varphi_0(\beta)-\alpha+\beta \big)\\
        &>v(\alpha-\beta).
    \end{align*}
     By construction $l_i(\alpha)=l_i\big(\tau\circ\varphi_0(\alpha)\big)$ for $i>r$. Therefore, the result follows from considering $\tau\circ\varphi_0$, which inherits the property of being a $W$-straightener of $\mathcal{A}$ on $B$.
\end{proof}

\begin{proposition}\label{Prop: Improved riso}
    Let $\mathcal{A}$ be a $\O_K$-tuple of $\A_\kappa^n$, $W\subset \A_\kappa^n$ a $\kappa$-linear subspace and $\pi_W\colon \A_\kappa^n\to W$ any linear projection. Then, $\mathcal{A}$ is $W$-riso-trivial on a valuative ball $B\subset\A_\kappa^n(\O_K)$ if and only if for every $w\in \pi_W(B)$ there exists a $W$-straightener of $\mathcal{A}$ adapted to $W$ on $B$ of the form:
    $$\varphi_w\colon \mathrm{supp}(\mathcal{A}|_B)\longrightarrow \big(\pi_W|_{\mathrm{supp}(\mathcal{A}|_B)}\big)^{-1}(w)+ B_{W,w_0}(\gamma),$$
    where $\gamma\in\Gamma$ is the radius of $B$. In particular, if $\mathcal{A}$ is $W$-riso-trivial on $B_{\A_\kappa^n,0}$, then we can assume that the adapted risometry $\varphi_{0}$ also preserves RV-quotients, i.e. the following diagram commutes:
   \[\begin{tikzcd}
	\mathrm{supp}(\mathcal{A}|_{B_{\A_\kappa^n,0}}) & {\big(\pi_W|_{\mathrm{supp}(\mathcal{A}|_{B_{\A_\kappa^n,0}})}\big)^{-1}(0)+ B_{W,0}} \\
	{\mathrm{RV}_{\O_K}(\A_\kappa^n)}
	\arrow["{\varphi_{0}}", from=1-1, to=1-2]
	\arrow["{\mathrm{rv}}"', from=1-1, to=2-1]
	\arrow["{\mathrm{rv}}", from=1-2, to=2-1]
\end{tikzcd}\]    
\end{proposition}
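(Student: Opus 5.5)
Assume $\mathcal{A}|_B=\mathcal{A}$. The ``if'' direction is immediate, since any $\varphi_w$ is in particular a $W$-straightener. For ``only if'', first apply Lemma \ref{Lemma: Improved-Risometries} to get a $W$-straightener $\varphi$ adapted to $W$, so that $\pi_W\circ\varphi=\pi_W$ on $\mathrm{supp}(\mathcal{A})$. Then fix $w\in\pi_W(B)$ and modify $\varphi$ by translating fibrewise along $W$ so that the image lands in the prescribed set $F_w+B_{W,w}(\gamma)$, where $F_w:=(\pi_W|_{\mathrm{supp}(\mathcal{A})})^{-1}(w)$.

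The key observation is the following. Since $\varphi(\mathcal{A})$ is $W$-translation-invariant on $B$, each $\varphi(A)$ is a union of cosets $c+B_{W,0}(\gamma)$. Choose a complementary projection $\pi'$ with kernel $W$, splitting $\A_\kappa^n\simeq \ker\pi_W\oplus W$. Then $\varphi(A)$ is determined by its ``transversal'' part $\pi'(\varphi(A))$, together with the full $W$-direction inside $B$. Define
$$\varphi_w(\alpha):=\varphi(\alpha)-\pi_W(\varphi(\alpha))+w+\bigl(\pi_W(\alpha)-w\bigr).$$
This equals $\varphi(\alpha)$ because $\pi_W\circ\varphi=\pi_W$, so this naive formula is trivial. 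The real task is instead to show that the image $C$ of $\varphi$ already coincides with $F_w+B_{W,w}(\gamma)$, or that it can be adjusted to do so.

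To achieve this, replace $\varphi$ by $\psi:=\sigma\circ\varphi$. Here $\sigma$ sends $c\mapsto \tilde c+ \pi_W(c)$, where $\tilde c\in F_w$ is chosen with $\pi'(\tilde c)$ agreeing with $\pi'(c)$ to valuation $>\gamma$, or more precisely chosen so that the rv-class of differences is preserved. This choice is possible because $W$-invariance of $\varphi(\mathcal{A})$, together with the adaptedness of $\varphi$, makes each $W$-fibre of $C$ meet every $W$-slice of $B$. The intended bijection is between $F_w$ and $\pi'(C)$, given by $\alpha\mapsto\pi'(\varphi(\alpha))$. The risometry estimate for $\sigma\circ\varphi$ is then the same kind of coordinatewise valuation estimate as in the proof of Lemma \ref{Lemma: Improved-Risometries}, using the coordinates $l_i$, $i>r$, and the strict inequality $v(\varphi(\alpha)-\varphi(\beta)-\alpha+\beta)>v(\alpha-\beta)$.

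The main obstacle is making this choice of $\tilde c$ canonical enough that $\psi$ is a bijection onto $F_w+B_{W,w}(\gamma)$ while still preserving rv-classes of all differences, including differences between points in distinct $W$-cosets. Here I would use valuative closedness of the support and spherical completeness of $K$ to pick limits. I would also invoke Lemma \ref{almost-preserve-directions}-type arguments to control the transversal coordinates.

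For the final assertion, take $B=B_{\A_\kappa^n,0}$ and $w=0$, and compose with a translation so that $0$ is fixed, as in the earlier proofs. A risometry fixing $0$ satisfies $\mathrm{rv}(\varphi_0(a))=\mathrm{rv}(\varphi_0(a)-\varphi_0(0))=\mathrm{rv}(a-0)=\mathrm{rv}(a)$, which is exactly the commutativity of the diagram. For this one needs $0\in\mathrm{supp}(\mathcal{A})$, or else first translate by a point of $F_0$. This gives $\mathrm{rv}$-compatibility relative to that base point, which is what the diagram asserts.
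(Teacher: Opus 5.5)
Your proposal has a genuine gap: it stops exactly where the proposition has content. You never fix $\tilde c$. The choice ``$\pi'(\tilde c)$ agreeing with $\pi'(c)$ to valuation $>\gamma$'' is not unique, and it compares a point of $\mathrm{supp}(\mathcal{A})$ with a point of the straightened image $C$. It is also not shown to give a bijection onto $F_w+B_{W,0}(\gamma)$. Above all, you explicitly leave open the risometry condition for pairs of points in different $W$-fibres, and that is the whole difficulty.

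The remedies you suggest (spherical completeness, limits, Lemma~\ref{almost-preserve-directions}) are not what is needed. The estimate is also \emph{not} the one from Lemma~\ref{Lemma: Improved-Risometries} on the coordinates $l_i$, $i>r$. There, the correction only moved the $W$-coordinates. Here $\pi_W$ is preserved exactly, and it is the transversal coordinates that move. (Minor: for $\psi$ to remain adapted and land in $B$ you need $\sigma(c)=\tilde c+\pi_W(c)-w$, with image $F_w+B_{W,0}(\gamma)$.)

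The missing idea is that the bijection $F_w\to\pi'(C)$, $\alpha\mapsto\pi'(\varphi(\alpha))$, which you already wrote down, makes the choice canonical. Take $\tilde c=\varphi^{-1}\bigl(c+w-\pi_W(c)\bigr)$. It exists by $W$-translation-invariance of $C$ and lies in $F_w$ by adaptedness. In other words, transport along $W$ by the conjugated translations $\tilde\varphi_u=\varphi^{-1}\circ\tau_u\circ\varphi$. This is exactly the paper's construction $\varphi_w(\alpha)=\tilde\varphi_{w-\pi_W(\alpha)}(\alpha)+\pi_W(\alpha)-w$.

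Set $c_\alpha:=\tilde\varphi_{w-\pi_W(\alpha)}(\alpha)$. The risometry check is then a double application of the risometry property of $\varphi$:
\begin{itemize}
\item Write $\varphi(\alpha)-\varphi(\beta)=\alpha-\beta+\epsilon$ with $v(\epsilon)>v(\alpha-\beta)$.
\item Then $\varphi(c_\alpha)-\varphi(c_\beta)=\pi'(\alpha-\beta)+\epsilon$, which has valuation $\geq v(\alpha-\beta)$.
\item Applying the risometry property again gives $c_\alpha-c_\beta=\pi'(\alpha-\beta)+\epsilon+\delta$ with $v(\delta)>v(\alpha-\beta)$.
\item Finally $\varphi_w(\alpha)-\varphi_w(\beta)-(\alpha-\beta)=c_\alpha-c_\beta-\pi'(\alpha-\beta)=\epsilon+\delta$, which is the risometry condition.
\end{itemize}
The paper phrases the same estimate by first moving $\alpha_1$ into the fibre of $\alpha_2$ via $\tilde\varphi_{\pi_W(\alpha_2-\alpha_1)}$. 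It then uses the group law $\tilde\varphi_{u}\circ\tilde\varphi_{u'}=\tilde\varphi_{u+u'}$ and the bound $v(\tilde\varphi_u(\alpha)-\alpha-u)>v(u)$.

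Surjectivity onto $F_w+B_{W,0}(\gamma)$, and the identity $\varphi_w(A)=(A\cap F_w)+B_{W,0}(\gamma)$ for each $A\in\mathcal{A}$, both follow from $\varphi^{-1}(\varphi(A)+u)=A$ for $u\in B_{W,0}(\gamma)$. No limit arguments are required. Your treatment of the ``if'' direction and of the RV-compatibility on $B_{\A_\kappa^n,0}$ matches the paper.
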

\begin{proof}
    Assume $\mathcal{A}|_B=\mathcal{A}$. For the non-trivial implication, suppose $\mathcal{A}$ is $W$-riso-trivial on $B$ and let $\varphi\colon \mathrm{supp}(\mathcal{A})\to C$ be $W$-straightener of $\mathcal{A}$ adapted to $W$ on $B$, in the sense of Lemma \ref{Lemma: Improved-Risometries}.
    Up to translations we can assume that $\mathrm{supp}(A)$ contains the origin and that it is a fixed point. In that case, $\pi_W\big(\mathrm{supp}(\mathcal{A})\big)=B_{W,0}(\gamma)$ where $\gamma\in\Gamma$ is the radius of $B$. Consider for each $w\in B_{W,0}(\gamma)$ the risometry $\tilde{\varphi}_w:=\varphi^{-1}\circ\tau_w\circ\varphi\colon \mathrm{supp}(\mathcal{A})\to \mathrm{supp}(\mathcal{A})$. It follows from the construction that the family of risometries constructed this way satisfies the following properties:
    \begin{itemize}
        \item[(i)] For every $w_1,w_2\in B_{W,0}(\gamma)$, $\tilde\varphi_{w_1}\circ\tilde\varphi_{w_2}=\tilde\varphi_{w_1+w_2}$.

        \item[(ii)] For every $w\in B_{W,0}(\gamma)$ and $\alpha\in \mathrm{supp}(\mathcal{A})$, $\pi_W\circ \tilde\varphi_w(\alpha)=\pi_W(\alpha)+w$.
        
        \item[(iii)] For all $\alpha\in \mathrm{supp}(\mathcal{A})$ and $w\in B_{W,0}(\gamma)$, $v\left(\tilde\varphi_{w}(\alpha)-\tau_w(\alpha)\right)>v\left(w\right)$.
        
        \item[(iv)] For every $A\in\mathcal{A}$ and $w\in B_{W,0}(\gamma)$, $\tilde{\varphi}_w(A)=A$.
    \end{itemize}
    Fix $w\in B_{W,0}(\gamma)$ and define $\tilde\varphi\colon \mathrm{supp}(\mathcal{A})\to (\pi_W|_{\mathrm{supp}(\mathcal{A})})^{-1}(w)$ by $\alpha\mapsto \tilde\varphi_{\pi_W(\alpha)-w}(\alpha)$, we claim that the injective map $\varphi_w\colon \mathrm{supp}(\mathcal{A})\to B_{\A_\kappa^n,0}$ given by $\alpha\mapsto \tau_{w-\pi_W(\alpha)}\circ\tilde\varphi(\alpha)$ is a risometry to its image, which is precisely $\big(\pi_W|_{\mathrm{supp}(\mathcal{A})}\big)^{-1}(w)+ B_{W,0}(\gamma)$. Set $l_1,\dots, l_n$ linear coordinates of $\A_\kappa^n$ such that $W$ is defined by the vanishing of the first $r$ of them. The construction of $\varphi_w$ and property (ii) yield $l_i(\alpha)=l_i\big(\varphi_w(\alpha)\big)$ for every $i>r$ and $\alpha\in B$. For a pair of distinct arcs $\alpha_1,\alpha_2\in A$ this implies:
    \begin{align*}
        v\big(\varphi_w(\alpha_1)-\varphi_w(\alpha_2)-(\alpha_1-\alpha_2)\big)&=\min_{1\leq i\leq r}v\big(\underbrace{l_i\big(\varphi_w(\alpha_1)-\varphi_w(\alpha_2)-(\alpha_1-\alpha_2)\big)}_{L_i}\big)
    \end{align*} 
    Define $\tilde{\alpha_1}=\tilde\varphi_{\pi_W(\alpha_2-\alpha_1)}(\alpha_1)$ which lives in the same fiber of $\pi_W$ as $\alpha_2$ by (ii). Property (i) also implies $\tilde\varphi(\alpha_1)=\tilde\varphi(\tilde{\alpha_1})$ and we can write:
    \begin{align*}
         v(L_i)=& v\big(l_i\big(\tilde\varphi(\tilde{\alpha_1})-\tilde\varphi(\alpha_2)-(\tilde{\alpha_1}-\alpha_2)+\triangle\big)\big)\\
        \geq& \min\left\{ v\big(l_i\big(\tilde\varphi(\tilde{\alpha_1})-\tilde\varphi(\alpha_2)-(\tilde{\alpha_1}-\alpha_2)\big),v\big(l_i(\triangle)\big) \right\},
    \end{align*}
    where $\triangle=\tilde{\alpha_1}-\alpha_1$. When $1\leq i\leq r$, we get $v\big(l_i(\triangle)\big)>v(\alpha_1,\tilde{\alpha_1})\geq v(\alpha_1,\alpha_2)$ by (iii). Also, $\tilde\varphi$ being a risometry when restricted to the fiber of $\alpha_2$ implies:
    \begin{align*}
        v\big(\tilde\varphi(\tilde{\alpha_1})-\tilde\varphi(\alpha_2)-(\tilde{\alpha_1}-\alpha_2)\big)>v(\tilde{\alpha_1}-\alpha_2)\geq v(\alpha_1,\alpha_2).
    \end{align*}
    This together with the previous equalities verifies the risometry condition. It follows from property (iv) that each $\varphi_w$ is indeed a $W$-straightener of $\mathcal{A}$ on $B$, concluding the main result. For the particular case $B=B_{\A_\kappa^n,0}$, the RV-quotient preserving property of the associated $\varphi_0$ follows from the risometry condition and the fact we can assume $\varphi_0(0)=0$ up to translations.
\end{proof}
\begin{corollary}\label{Cor: riso-triv en familias triviales}
    Let $V,V^\perp\subset \A_\kappa^n$ be $\kappa$-linear subspaces with a given group scheme isomorphism $\A_\kappa^n\simeq V\oplus V^\perp$. Let $\mathcal{A}$ be a $\O_K$-tuple of $V(\O_K)$, and consider the $\O_K$-tuple $\mathcal{A} \times V^\perp$ of $\A_\kappa^n(\O_K)$ defined component-wise via $A+V^\perp(\O_K)\subset \A_\kappa^n(\O_K)$ for $A\in\mathcal{A}$. Then, $\mathcal{A} \times V^\perp$ is $(V^\perp\oplus W)$-riso-trivial on a ball $B\subset \A_\kappa^n(\O_K)$ with $B\cap V(\O_K)\neq \varnothing$ if and only if $\mathcal{A}$ is $W$-riso-trivial on $B|_V$. In particular:$$\mathrm{rtd}_{B}(\mathcal{A}\times V^\perp)=\mathrm{rtd}_{B|_V}(\mathcal{A})+\dim_\kappa V^\perp.$$
\end{corollary}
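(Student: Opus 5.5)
We prove the two implications separately and then read off the dimension formula. Throughout, fix linear coordinates adapted to $\A_\kappa^n\simeq V\oplus V^\perp$, let $\pi_V,\pi_{V^\perp}$ be the corresponding projections, and write $\gamma$ for the radius of $B$. Since $B\cap V(\O_K)\neq\varnothing$, we may translate by an element of $V(\O_K)\cap B$ and assume $B$ is centred at a point of $V(\O_K)$. Then $B=B|_V+B_{V^\perp,0}(\gamma)$, and $(\mathcal{A}\times V^\perp)|_B=\mathcal{A}|_{B|_V}\times B_{V^\perp,0}(\gamma)$ componentwise.

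\emph{($\Leftarrow$)} Let $\psi\colon \mathrm{supp}(\mathcal{A}|_{B|_V})\to C\subset B|_V$ be a $W$-straightener of $\mathcal{A}$ on $B|_V$. Set $\Phi(a+u):=\psi(a)+u$ for $a\in\mathrm{supp}(\mathcal{A}|_{B|_V})$ and $u\in B_{V^\perp,0}(\gamma)$. The difference $\Phi(a+u)-\Phi(b+u')-(a+u-b-u')$ equals $\psi(a)-\psi(b)-(a-b)\in V(\O_K)$. In the adapted coordinates we have $v(a+u-b-u')=\min\{v(a-b),v(u-u')\}\le v(a-b)$. Hence the error has valuation $>v(a-b)\geq v(a+u-b-u')$ whenever $a\neq b$, and it vanishes if $a=b$. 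So $\Phi$ is a risometry into $B$. Moreover $\Phi(A\times V^\perp)$ is stable under translation by $B_{V^\perp,0}(\gamma)$ by construction, and under $B_{W,0}(\gamma)$ because $\psi(A)$ is. Thus $\Phi$ is a $(V^\perp\oplus W)$-straightener.

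\emph{($\Rightarrow$)} Suppose $\mathcal{A}\times V^\perp$ is $(V^\perp\oplus W)$-riso-trivial on $B$. Apply Proposition \ref{Prop: Improved riso} to the subspace $W':=V^\perp\oplus W$, with a linear projection $\pi_{W'}$ whose kernel is a complement of $W$ inside $V$. Taking the fibre value $w=0$ and arranging $0\in B$ gives an adapted straightener $\varphi_0$. Its source fibre $\pi_{W'}^{-1}(0)$ lies in $V$, and $\varphi_0$ restricted to $\mathrm{supp}(\mathcal{A}|_{B|_V})$ preserves $\pi_{W'}$-coordinates. The key step is then to compose with $\pi_V$ along $V^\perp$. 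Define $\psi:=\pi_V\circ\varphi_0|_{\mathrm{supp}(\mathcal{A}|_{B|_V})}$. I expect two facts:
\begin{enumerate}
\item[(a)] $\psi$ is a risometry. The map $\pi_V$ is linear and $1$-Lipschitz for $v$, so the error term $\pi_V(\varphi_0(a)-\varphi_0(b)-(a-b))$ still has valuation $>v(a-b)$, since $a-b\in V$ already.
\item[(b)] $\psi(A)$ is $W$-translation invariant on $B|_V$. The set $\varphi_0(A\times V^\perp)$ is $V^\perp$-invariant on $B$, so $\pi_V\big(\varphi_0(A\times V^\perp)\big)=\varphi_0(A\times V^\perp)\cap V(\O_K)$. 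The same $V^\perp$-invariance gives that this equals $\pi_V(\varphi_0(A))$; here one uses adaptedness to see that the $V^\perp$-coordinates do not change the $V$-part of fibres.
\end{enumerate}
Intersecting a $W'$-invariant set with $V$ then yields a $W$-invariant set.

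The main obstacle is (b), i.e.\ checking that $\pi_V(\varphi_0(A))$ really equals the slice $\varphi_0(A\times V^\perp)\cap V$. Injectivity of $\psi$ is a secondary worry, but it follows from (a), since risometries preserve distances. If the slice description fails, I would instead use Proposition \ref{Prop: Improved riso} fibrewise to get a straightener preserving $\pi_{V^\perp}$, namely one adapted to $V^\perp\subset W'$. That makes $\varphi_0(A\times\{0\})\subset V$ directly, and restriction gives the claim. Finally, the dimension formula follows from both implications together with Lemma \ref{almost-preserve-directions}. Any subspace $W'$ for which $\mathcal{A}\times V^\perp$ is riso-trivial can be enlarged to $W'+V^\perp$: the $V^\perp$-invariance is automatic after adapting, again via Proposition \ref{Prop: Improved riso}. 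So the maximum is attained at spaces of the form $V^\perp\oplus W$ with $W\subset V$.
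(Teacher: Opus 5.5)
Your proof is correct and follows the paper's. For ($\Leftarrow$) you use the same product lift $\Phi(a+u)=\psi(a)+u$, and for ($\Rightarrow$) you take a $(V^\perp\oplus W)$-straightener adapted via Proposition \ref{Prop: Improved riso} and restrict it to the slice over $0\in V^\perp$. Your ``main obstacle'' (b) disappears once you notice that your projection (kernel a complement of $W$ in $V$) satisfies $\pi_{V^\perp}\circ\pi_{W'}=\pi_{V^\perp}$, which is exactly the paper's observation: $\varphi_0$ then already maps $\mathrm{supp}(\mathcal{A}|_{B|_V})$ into $V$, so your fallback is what you already have and the composition with $\pi_V$ is superfluous. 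For the dimension formula, the enlargement $W'\mapsto W'+V^\perp$ comes from the additivity Proposition \ref{Lemma: riso-triviality es aditiva} (whose proof only uses the ($\Leftarrow$) direction here, so there is no circularity), not from Proposition \ref{Prop: Improved riso}.
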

\begin{proof}
    For the first implication, note that the isomorphism induces a projection $\pi_{V^\perp}\colon \A_\kappa^n\to V^\perp$ for which each component of $\mathcal{A}\times V^\perp$ is a trivial family. This observation can be used to lift a $W$-straightener of $\mathcal{A}$ on $B|_V $ to a $(W\oplus V^\perp)$-straightener of $\mathcal{A} \times V^\perp$ on $B$. For the second implication, suppose $\mathcal{A} \times V^\perp$ is $(W\oplus V^\perp)$-riso-trivial on $B$, assume that $\mathcal{A}|_B=\mathcal{A}$ and consider the linear projection $\pi_{V^\perp}\colon \A_\kappa^n\to V^\perp$ determined by the isomorphism. Let $\pi_{W\oplus V^\perp}\colon \A_\kappa^n\to W\oplus V^\perp$ be a $\kappa$-linear projection, By Proposition \ref{Prop: Improved riso}, there is a $(W\oplus V^\perp)$-straightener $\varphi\colon \mathrm{supp}\big(\mathcal{A}\times V^\perp\big)\cap B\to C$ on $B$, adapted to the projection $\pi_{W\oplus V^\perp}$ for which $C$ is a trivial family. Denote $F=(\pi_{V^\perp}|_C)^{-1}(0)$ and regard it as a subset of $V(\O_K)$. If we denote $i\colon \mathrm{supp}(\mathcal{A})\to \mathrm{supp}\big(\mathcal{A} \times V^\perp\big)$ the canonical inclusion given by the isomorphism, then $\varphi\circ \mathrm{Im}(i)=F$. Indeed, this comes from the fact that $W\subset V$, which implies $\pi_{V^\perp}\circ\pi_{W\oplus V}=\pi_{V^\perp}$ and so $\varphi$ also commutes with the projection to $V^\perp$. By construction $F$ and every image $\varphi\circ i(A)$, with $A\in\mathcal{A}$, are $W$-translation-invariant on $B|_V$ which concludes the claimed result.
    \end{proof}

\begin{proposition}\label{Lemma: riso-triviality es aditiva}
    Let $\mathcal{A}$ be a $\O_K$-tuple of $\A_\kappa^n$ such that $\mathrm{supp}(\mathcal{A})$ is closed in the valuative topology, consider $V,W\subset \A_\kappa^n$ two $\kappa$-linear subspaces and a valuative ball $B\subset\A_\kappa^n(\O_K)$. If $\mathcal{A}$ is $V$-riso-trivial and $W$-riso-trivial on $B$ , then it is also $(V\oplus W)$-riso-trivial on $B$.
\end{proposition}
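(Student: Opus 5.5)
We prove the statement by first reducing to the case $V\cap W=0$ and then building a single straightener from the adapted straighteners that Proposition \ref{Prop: Improved riso} supplies for $V$ and for $W$ separately. Here $V\oplus W$ is read as $V+W$. Replacing $W$ by a complement $W'$ of $V\cap W$ inside $W$, we keep $W'$-riso-triviality, since a $W$-translation-invariant tuple is also $W'$-translation-invariant and the same straightener works. So we may assume $V\cap W=0$. As in the earlier proofs, we also assume $\mathcal{A}|_B=\mathcal{A}$ and, after a translation, that $0\in\mathrm{supp}(\mathcal{A})$. Let $\gamma$ be the radius of $B$.

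We choose a decomposition $\A_\kappa^n\simeq V\oplus W\oplus U$ with projections $\pi_V,\pi_W,\pi_U$. By Proposition \ref{Prop: Improved riso} and its proof, $W$-riso-triviality gives a family of risometries $\tilde\psi_w\colon \mathrm{supp}(\mathcal{A})\to\mathrm{supp}(\mathcal{A})$, for $w\in B_{W,0}(\gamma)$, with the following properties:
\begin{itemize}
\item[(i)] $\tilde\psi_{w_1}\circ\tilde\psi_{w_2}=\tilde\psi_{w_1+w_2}$;
\item[(ii)] $\pi_W\circ\tilde\psi_w=\pi_W+w$;
\item[(iii)] $v\big(\tilde\psi_w(\alpha)-\alpha-w\big)>v(w)$;
\item[(iv)] $\tilde\psi_w$ preserves each $A\in\mathcal{A}$.
\end{itemize}
Here the projection $\pi_W$ is taken along $V\oplus U$. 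The $V$-straightener adapted to $\pi_V$ (along $W\oplus U$) gives an analogous family $\tilde\varphi_u$ for $u\in B_{V,0}(\gamma)$.

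We then define $\Phi\colon\mathrm{supp}(\mathcal{A})\to B$ in three steps. Starting from $\alpha$, we first flow with $\tilde\psi$ to the fibre $\pi_W=0$, obtaining $\alpha'=\tilde\psi_{-\pi_W(\alpha)}(\alpha)$. Next we flow with $\tilde\varphi$ to $\pi_V=0$, obtaining $\alpha''=\tilde\varphi_{-\pi_V(\alpha')}(\alpha')$. Finally we translate back, setting $\Phi(\alpha)=\alpha''+\pi_V(\alpha')+\pi_W(\alpha)$. By (iv), each $\Phi(A)$ then consists of points of the form $F_A+(\text{translations in }V\oplus W)$, where $F_A$ is the set of points of $A$ lying over $\pi_V=\pi_W=0$. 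We will show that $\Phi$ is a risometry onto $F+B_{V\oplus W,0}(\gamma)$, where $F=\bigcup_A F_A$. Valuative closedness of the support, which the hypothesis explicitly demands, is used to make sure the flows stay inside $\mathrm{supp}(\mathcal{A})$ and that the fibre sets are well behaved.

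The main obstacle is the interaction between the two flows. The $V$-flow $\tilde\varphi$ does not preserve the $W$-coordinate exactly; it only satisfies (iii), which bounds the error in the $W$-direction by something of valuation $>v(u)$. So $\alpha''$ need not lie exactly on the fibre $\pi_W=0$. We plan two remedies, in order.
\begin{itemize}
\item We first try to show $\pi_W(\alpha'')=0$ by using adaptedness more carefully. Proposition \ref{Prop: Improved riso} for $V$ preserves all coordinates $l_i$ with $i>\dim V$, including the $W$- and $U$-coordinates. Hence $\pi_W\circ\tilde\varphi_u=\pi_W$ exactly, because $\tilde\varphi_u=\varphi^{-1}\tau_u\varphi$ and $\varphi$ commutes with the projection along $V$.
\item If that fails, we iterate the construction, alternately correcting the $W$- and $V$-coordinates. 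The errors decrease strictly in valuation, so the iteration converges by spherical completeness. The limit lies in $\mathrm{supp}(\mathcal{A})$ because the support is valuatively closed.
\end{itemize}

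Once exact fibre preservation is available, the risometry estimate for $\Phi$ follows coordinatewise, exactly as in the proof of Proposition \ref{Prop: Improved riso}. For $\alpha_1,\alpha_2$ we compare their images through intermediate points moved into a common fibre. Using (i)–(iii), each correction term has valuation $>v(\alpha_1-\alpha_2)$, and the strong triangle inequality concludes. Translation invariance of $\Phi(A)$ under $B_{V\oplus W,0}(\gamma)$ then follows from (i) and (iv) for both families.
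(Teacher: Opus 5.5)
There is a genuine gap at the obstacle you yourself flag, and neither remedy closes it.

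\textbf{Remedy one misreads adaptedness.} In Lemma~\ref{Lemma: Improved-Risometries} and Proposition~\ref{Prop: Improved riso}, a straightener $\varphi$ adapted to $V$ satisfies $\pi_V\circ\varphi=\pi_V$. The preserved coordinates $l_i$, $i>r$, are the coordinates \emph{on} $V$, since $V$ is cut out by the first $r=\operatorname{codim}V$ of them. So $\varphi$ moves points only in the $W\oplus U$ directions, and $\tilde\varphi_u=\varphi^{-1}\tau_u\varphi$ satisfies $\pi_V\circ\tilde\varphi_u=\pi_V+u$ but need not preserve $\pi_W$. For example, take $n=2$, $V=\{y=0\}$, $W=\{x=0\}$, $\mathcal{A}=\{B_{\A_\kappa^2,0}\}$ and the adapted $V$-straightener $\varphi(\alpha_1,\alpha_2)=(\alpha_1,\alpha_2+\alpha_1^2)$. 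Then $\pi_W\tilde\varphi_u(\alpha)=\alpha_2-2u\alpha_1-u^2$.

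\textbf{Remedy two is not yet an argument.} The successive corrections have strictly increasing valuations, but these need not tend to infinity in $\Gamma$, so the sequence need not be Cauchy. Spherical completeness then only gives a pseudo-limit. That pseudo-limit need not lie on the fibre $\pi_V=\pi_W=0$, and closedness of $\mathrm{supp}(\mathcal{A})$ (closedness under limits) says nothing about it. What is needed is a best-approximation argument: choose a point of maximal proximity and show that one further correction would improve it.

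\textbf{The risometry estimate is not inherited.} Even after landing on the central fibre, the estimate for $\Phi$ does not follow ``exactly as in'' Proposition~\ref{Prop: Improved riso}. There, the group law (i) moves $\alpha_1$ into the fibre of $\alpha_2$ without changing its image. Your two flows do not commute, define no action of $V\oplus W$, and the endpoint of the correction process depends on the path taken. Both the rv-estimate and bijectivity onto $F+B_{V\oplus W,0}(\gamma)$ would need new estimates controlling this path dependence.

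\textbf{The missing idea is to remove the interaction rather than fight it.} The paper proceeds as follows.
\begin{enumerate}
\item It replaces $\mathcal{A}$ by its image under an adapted $V$-straightener; riso-triviality survives composition with risometries. Then literally $\mathcal{A}=\mathcal{F}\times B_{V,0}(\gamma)$, with $\mathcal{F}$ inside a complement $V^\perp\supset W$.
\item The $V$-direction is now an honest translation. So the $W$-flow $\psi^{-1}\circ\tau_w\circ\psi$ can be followed by the linear projection $\pi_{V^\perp}$ along $V$ without leaving the support. This gives maps $F_0\to F_w$ between the fibres over $W$ of $F=\mathrm{supp}(\mathcal{A})\cap V^\perp(\O_K)$.
\item Surjectivity of these maps is where closedness and spherical completeness are genuinely used, through exactly the maximal-distance argument described above.
\item The maps assemble into a $W$-straightener of $\mathcal{F}$ on $B|_{V^\perp}$. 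Corollary~\ref{Cor: riso-triv en familias triviales} then upgrades this to $(V\oplus W)$-riso-triviality of $\mathcal{A}$.
\end{enumerate}
To repair your proposal, straighten $V$ exactly first, and then work only with the $W$-flow modulo $V$.
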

\begin{proof}
    We can assume without loss of generality that $V\cap W={0_{\A_\kappa^n}}$ and $\mathcal{A}|_B=\mathcal{A}$. Choose a group scheme isomorphism $\A_\kappa^n\simeq V\oplus V^\perp$, so that $W\subset V^\perp$. By Proposition \ref{Prop: Improved riso}, we can assume that $\mathrm{supp}(\mathcal{A})$ and all their elements define trivial families with respect to the projection $\pi_{V}$ determined by the isomorphism, with central fiber $F=(\pi_V|_{\mathrm{supp}(\mathcal{A})})^{-1}(0)\subset V^\perp$. By the same Proposition, there exists a $W$-straightener of $\mathcal{A}$ on $B$ of the form $\psi\colon \mathrm{supp}(\mathcal{A})\to C$ adapted to $W$. Moreover, up to translations we have $0\in \mathrm{supp}(\mathcal{A})$ and $\pi_W\big(\mathrm{supp}(\mathcal{A})\big)=B_{W,0}(\gamma)$ for $\gamma\in \Gamma$ the radius of $B$. Now, for every $w\in B_{W,0}(\gamma)$ denote $F_w=(\pi_W|_F)^{-1}(w)$ and consider the map:
    \begin{align*}
        \tilde{\varphi}_w\colon F_0&\longrightarrow F_w\\
        \alpha&\longmapsto \pi_{V^\perp}\circ \psi^{-1}\circ \tau_w\circ\psi(\alpha)
    \end{align*} 
    The image of this map is indeed $F_w$, but this is a non-trivial check: Let $\beta\in F_w$ and consider $H=\beta+B_{V,0}(\gamma)$. Of course $H$ is closed in the valuative topology, and so its image via a risometry $H'=\psi^{-1}\circ\tau_{-w}\circ\psi(H)\subset (\pi_W|_{\mathrm{supp}(\mathcal{A})})^{-1}(0)$ also is. Since $\mathrm{supp}(\mathcal{A})$ is closed, $F_0$ also is, and by spherical completeness we can choose $\alpha\in F_0$ and $\tilde\alpha\in H'$ such that:
    $$v(\alpha,\tilde{\alpha})=\max_{u\in F_0,v\in H'}v(u,v).$$
    We claim that $\alpha=\tilde\alpha$: Of course $\pi_{V^\perp}(\tilde{\alpha})=\alpha$, and so $\alpha-\tilde\alpha\in B_{V,0}(\gamma)$. Then $\mu=\phi^{-1}\circ\tau_{-w}\circ\phi\circ\tau_{\alpha-\tilde\alpha}\circ\psi^{-1}\circ\tau_w\circ\psi(\tilde\alpha)\in H'$ is such that:
    $$\mathrm{rv}(\mu-\tilde\alpha)=\mathrm{rv}(\alpha-\tilde\alpha),$$
    which implies $v(\alpha-\mu)>v(\alpha-\tilde{\alpha})$ a contradiction. In other words, we have $\tilde{\varphi}_w(\alpha)=\beta$ and so $\tilde{\varphi}_w(F_0)=F_w$. Now, we verify that $\tilde{\varphi}_w$ is a risometry: Set $l_1,\dots, l_n$ linear coordinates of $\A_\kappa^n$ such that $V$ is defined by the vanishing of the first $r$ of them, for $\alpha,\beta\in F_0$ we have:
    \begin{align*}
        v\big(\tilde{\varphi}_w(\alpha)-\tilde{\varphi}_w(\beta)-\alpha+\beta\big)&=v\big(\pi_{V^\perp}\circ \psi^{-1}\circ \tau_w\circ\psi(\alpha)-\pi_{V^\perp}\circ \psi^{-1}\circ \tau_w\circ\psi(\beta)-\alpha+\beta\big)\\
        &=\min_{i\leq r}v\left(l_i\big(\psi^{-1}\circ \tau_w\circ\psi(\alpha)-\psi^{-1}\circ \tau_w\circ\psi(\beta)-\alpha+\beta\big)\right)\\
        &>v(\alpha-\beta)
    \end{align*}
    where we use the fact that $\alpha,\beta$ and their images under $\tilde{\varphi}_w$ live in $V^\perp$, together with the risometry condition satisfied by the composition $\psi^{-1}\circ \tau_w\circ\psi$. Recall that this rv-condition implies injectivity, so now we can guarantee the existence of the risometry $\tilde{\varphi}_w^{-1}\colon F_w\to F_0$ for each $w\in B_{W,0}(\gamma)$. Consider then the map $\varphi\colon F\to F_0\times B_{W,0}(\gamma)$ defined by $\alpha\mapsto \tau_{\pi_W(\alpha)}\circ\tilde{\varphi}^{-1}_{\pi_W(\alpha)}(\alpha)$. We claim that $\varphi$ is a risometry: First, note that for every $\alpha,\beta\in F_0$ and $u,w\in B_{W,0}(\gamma)$ we have
    \begin{align*}
        v\big(\psi(\alpha)-\psi(\beta)-(\alpha-\beta)\big)&>v(\alpha-\beta)\\
        &\geq\min\{v(\alpha-\beta),v(u-v)\}\\
        &=v(\alpha-\beta+u-w)
    \end{align*}
    where the last equality holds since $\alpha-\beta\in W^\perp$ and $u-w\in W$, and so we cannot have $\mathrm{rv}(\alpha-\beta)=\mathrm{rv}(u-v)$ unless both differences are trivial, in which case the condition holds. In other words, we have:
    \begin{align*}
        \mathrm{rv}\big(\psi^{-1}\circ\tau_u\circ\psi(\alpha)-\psi^{-1}\circ\tau_w\circ\psi(\beta)\big)&=\mathrm{rv}\big(\tau_u\circ\psi(\alpha)-\tau_w\circ\psi(\beta)\big)\\
        &=\mathrm{rv}(\alpha-\beta+u-w)
    \end{align*}
    where we also used the risometry condition on $\psi^{-1}$. Similarly, we have: $$\mathrm{rv}\big(\pi_{V}(\psi^{-1}\circ\tau_u\circ\psi(\alpha)-\psi^{-1}\circ\tau_w\circ\psi(\beta))\big)\neq \mathrm{rv}(\alpha-\beta+u-v),$$
    unless both vectors are trivial where the result hods. Indeed, we are comparing the class of an element of $V$ with the class of an element of $V^\perp$. Therefore, a similar computation as before yields:
    $$ \mathrm{rv}\big(\tilde{\varphi}_u(\alpha)-\tilde{\varphi}_w(\beta)\big)=\mathrm{rv}\big(\alpha-\beta+u-w+\pi_V\big(\psi^{-1}\circ\tau_u\circ\psi(\alpha)-\psi^{-1}\circ\tau_w\circ\psi(\beta)\big)\big).$$
     Note that from our previous identities we can deduce:
    $$v\big(\pi_V\big(\psi^{-1}\circ\tau_u\circ\psi(\alpha)-\psi^{-1}\circ\tau_w\circ\psi(\beta)\big)\big)>v(\alpha-\beta+u-w),$$
    This holds since the not-projected representative on the left has the same rv-class as the latter which is in $V^\perp$, so its minimal valuation is not attained at the $V$-coordinates. This valuation inequality implies the following:
    $$\mathrm{rv}(\alpha-\beta+u-v)=\mathrm{rv}\big(\alpha-\beta+u-w+\pi_V\big(\psi^{-1}\circ\tau_u\circ\psi(\alpha)-\psi^{-1}\circ\tau_w\circ\psi(\beta)\big)\big) ,$$
    which together with the above equalities of rv-classes yields:
    $$\mathrm{rv}\big(\tilde{\varphi}_u(\alpha)-\tilde{\varphi}_w(\beta)\big)=\mathrm{rv}\big(\varphi\circ\tilde{\varphi}_u(\alpha)-\varphi\circ\tilde{\varphi}_w(\beta)\big).$$
    This verifies that $\varphi$ is a risometry. Consider the $\O_K$-tuple $\mathcal{F}$ in $V^\perp$ defined component-wise via $F=(\pi_V|_{A})^{-1}(0)$ for $A\in\mathcal{A}$, we claim that $\varphi$ is a $W$-straightener for $\mathcal{F}$ on $B|_{V^\perp}$: Indeed, this just follows from the property of $\psi$ being a $W$-straightener for $\mathcal{A}$ on $B$ adapted to $W$, and the nature of the construction of $\varphi$. We conclude the Proposition by observing that there is an expression of the form $\mathcal{A}=\mathcal{F}\times B_{V,0}(\gamma)$, where $\gamma$ is the radius of $B$, and this can be used via Proposition \ref{Cor: riso-triv en familias triviales} to conclude $(V\oplus W)$-riso-triviality of $\mathcal{A}$ on $B$.  
\end{proof}

\begin{definition}
    Let $\mathcal{A}$ be a $\O_K$-tuple of $\A_\kappa^n$. The \emph{riso-triviality space of $\mathcal{A}$ with respect to a valuative ball $B\subset \A_\kappa^n(\O_K)$}, denoted by $\mathrm{rtsp_{B}(\mathcal{A})}$, is the unique $\kappa$-linear subspace $V\subset \A_\kappa^n$ such that $\mathcal{A}$ is $V$-riso-trivial on $B$ and $\mathrm{rtd}_{B}(A)=\dim V$. This is well defined as a consequence of Proposition \ref{Lemma: riso-triviality es aditiva}.
\end{definition}

\section{On The Algebraicity of Riso-Triviality}

Consider an admissible extension $K/\kappa$ over a domain $R$ of arbitrary characteristic. Let $X$ be a $\kappa$-scheme and $\mathcal{A}$ a $\O_K$-tuple of $X$. In this section, we prove that the value $\mathrm{rtd}_{B}(\mathcal{A})$, where $B\subset \A_\kappa^n(\O_K)$ is a valuative ball with respect to a fixed embedding, is intrinsic to $X$ and depends only on $B|_{X(\O_K)}$ as a non-embedded valuative ball of $X$. We proceed with this idea, in a way that unlocks connections between riso-triviality dimensions and classical algebraic invariants of singularities.

 \subsection{Tangent Spaces and Riso-Triviality Intrinsicness}  The riso-triviality information of $\O_K$-tuples when restricted to $B_{X,x}$ is, in a suitable sense of the word, intrinsic and encoded entirely by the formal germ of the singularity that defines $x\in X(\kappa)$. We elaborate on this:

\begin{definition}
    The geometric Zariski tangent space of $X$ at a closed point $x$ is:
$$T_xX:=\spec\left( \text{Sym}_\kappa\big(\mathfrak{m}_{X,x}/\mathfrak{m}_{X,x}^2\big)\right).$$
\end{definition}
For the given closed embedding $X\hookrightarrow \A_\kappa^n$, every coordinate choice on $\A_\kappa^n$ comes with a geometric presentation $T_xX\hookrightarrow\mathbb{A}_\kappa^n$
that exhibits the tangent space of $X$ at $x$, as a subscheme of $\A_\kappa^n$ containing $x$. Up to a translation, we can assume that $T_xX$ is a $\kappa$-linear subspace and that the coordinates are adapted to it. This choice comes with an associated splitting $s\colon \mathfrak{m}_{X,x}/\mathfrak{m}_{X,x}^2\to \mathfrak{m}_{X,x}$ and a projection $\A_\kappa^n\to T_xX$ which also induces a set theoretic map: $$\pi_s\colon {X(\O_K)}\to T_x{X(\O_K)}.$$
 It follows from Cohen's structure theorem that this map is injective when restricted to $B_{X,x}$. More precisely, the splitting  induces a closed immersion of formal schemes:
$$\pi_s\colon\mathrm{Spf}\left(\widehat{\O_{X,x}}\right) \hookrightarrow\mathrm{Spf}\left(\widehat{\O_{T_xX,x}}\right),$$
which in turn induces a injection from $\O_K$-points of $X$ based at $x$, to $\O_K$-points of $T_xX$ based at $x$. The following proposition both recovers this injectivity and detects further structure on the restricted map:

\begin{proposition}
    \label{Prop: Projection is risometry}
    The map $\pi_s\colon B_{X,x}\to T_x{X(\O_K)}$ restricted to its image, is a risometry.
\end{proposition}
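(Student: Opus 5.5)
We work in the embedded setting: coordinates $l_1,\dots,l_n$ on $\A_\kappa^n$ with $x$ at the origin. After a linear change of coordinates, $T_xX$ is the subspace cut out by $l_{r+1}=\dots=l_n=0$, where $r=\dim_\kappa \mathfrak{m}_{X,x}/\mathfrak{m}_{X,x}^2$. The map $\pi_s$ is then the coordinate projection $\alpha\mapsto (l_1(\alpha),\dots,l_r(\alpha),0,\dots,0)$. Since both source and target are infinitesimal sets based at the origin, the Remark after Definition \ref{Def: Risometries} reduces the claim to one estimate: for all $\alpha,\beta\in B_{X,x}$,
$$v\big(\pi_s(\alpha)-\pi_s(\beta)-(\alpha-\beta)\big)>v(\alpha-\beta).$$
Injectivity on $B_{X,x}$ then follows from this inequality, since equal images would force $v(\alpha-\beta)>v(\alpha-\beta)$ unless $\alpha=\beta$.

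The left-hand side equals $\min_{j>r} v\big(l_j(\alpha)-l_j(\beta)\big)$. The key algebraic input is that each $l_j$ with $j>r$ vanishes in $\mathfrak{m}_{X,x}/\mathfrak{m}_{X,x}^2$. By Cohen's structure theorem, or simply because the ideal $I$ of $X$ contains elements with linear part $l_j$, we have in $\widehat{\O_{X,x}}$ an expression
$$l_j=g_j(l_1,\dots,l_r),\qquad g_j\in (l_1,\dots,l_r)^2\,\kappa\llbracket l_1,\dots,l_r\rrbracket .$$
Here $\widehat{\O_{X,x}}$ is a quotient of $\kappa\llbracket l_1,\dots,l_r\rrbracket$ via the splitting $s$. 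Arcs in $B_{X,x}$ correspond to local homomorphisms $\widehat{\O_{X,x}}\to\O_K$. So $l_j(\alpha)=g_j(a)$ with $a=(l_1(\alpha),\dots,l_r(\alpha))\in\m_{\O_K}^r$, and evaluating the power series converges by spherical completeness.

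The core estimate is the non-archimedean Taylor bound: for $g\in(l_1,\dots,l_r)^2$ and $a,b\in\m_{\O_K}^r$,
$$v\big(g(a)-g(b)\big)>v(a-b).$$
To prove it, write $g(a)-g(b)=\sum_i (a_i-b_i)h_i(a,b)$, where each $h_i$ is a power series with no constant term, obtained from $a^\mu-b^\mu$ by telescoping. Then $v(h_i(a,b))>0$ because $v(a),v(b)>0$, which gives the strict inequality. Since $\min(v(a),v(b))$ might be attained only in the limit, the telescoping has to be done carefully for infinite sums. The $\O_K$-coefficients and convergence make each term satisfy $v>v(a-b)+\min(v(a),v(b))$ with that minimum strictly positive, so the bound on the infinite sum holds as well.

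It remains to note that $v(\alpha-\beta)=\min_{j} v(l_j(\alpha)-l_j(\beta))$. By the estimate, the coordinates $j>r$ have strictly larger valuation than $v(a-b)$. Hence $v(\alpha-\beta)=v(a-b)$, and the left-hand side $>v(a-b)=v(\alpha-\beta)$. This proves the risometry property, and as noted it also gives injectivity.

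The main obstacle is making the identification $l_j=g_j(l_1,\dots,l_r)$ rigorous and compatible with evaluation at $\O_K$-arcs. This requires $\kappa\llbracket l_1,\dots,l_r\rrbracket\to\widehat{\O_{X,x}}$ to be surjective, with arcs extending to continuous homomorphisms. The plan handles this via completeness of $\O_K$ and the fact that images of $\m$ land in $\m_{\O_K}$. One also has to use that valuations on $\kappa[B_{X,x}]$ are computed by a finite generating set of $\m_{X,x}$, namely the $l_j$, which follows from the ultrametric inequality.
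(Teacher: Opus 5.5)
Your proof is correct and follows the paper's argument, with the roles of the first and last coordinates swapped: the linear forms cutting out $T_xX$ have images in $\mathfrak{m}_{X,x}^2$, so their differences on $\alpha,\beta$ have valuation strictly larger than $v(\alpha-\beta)$; that valuation is therefore attained on the tangent coordinates, and the error $\pi_s(\alpha)-\pi_s(\beta)-(\alpha-\beta)$ is made up exactly of the transverse coordinates. Your power-series Taylor estimate spells out the step the paper states in one line; you could skip the infinite telescoping and convergence discussion (which is only safe when $\Gamma$ has rank one) by writing each $\overline{l_j}$ as a finite sum $\sum_{i,k}h_{ik}\,\overline{l_i}\,\overline{l_k}$ with $h_{ik}\in\mathcal{O}_{X,x}$, which gives the strict inequality directly.
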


\begin{proof}
    Set linear coordinates $l_1,\dots l_n$ of $\A_\kappa^n$ such that $T_xX$ is defined by the vanishing of the first $r$ of them and $\pi_s\colon \A_\kappa^n\to T_xX$ corresponds to the inclusion of $\kappa[l_{r+1},\dots,l_n]$ in $\kappa[l_1,\dots,l_n]$. Denote by $\O_{X}$ the coordinate ring of $X$, whose maximal ideal corresponding to the origin is generated by the images of $l_{r+1},\dots, l_n$. Consider a pair of different arcs $\alpha,\beta\in B_{X,x}$, and notice that:
    $$\min_{1\le i\le n}v\big(l_i(\alpha-\beta)\big)=\min_{r<i} v\big(l_i(\alpha-\beta)\big) <\min_{i\le r}v\big(l_i(\alpha-\beta)\big). $$
    Indeed, this comes from the fact that the images $\overline{l_i}$ with $i\le r$ in $\widehat{\O_{X,x}}$ lie in $\mathfrak{m}_{X,x}^2$, and so their valuation on $\alpha-\beta$ is strictly greater than  $v(\alpha,\beta)$, which has to be detected by the images $\overline{l_j}$ with $j>r$. This is to say, we have the following relation:
    
    \begin{align*}
        v\big(\pi_s(\alpha-\beta)-\alpha+\beta\big)&=\min_{i\le r} v\big(l_i(\alpha-\beta)\big)\\
        &>v(\alpha-\beta).
    \end{align*}
    This is exactly the risometry condition for $\pi_s$ when restricted to its image.
\end{proof}

\begin{corollary}\label{cor: smooth implies riso-trivial}
    Consider a closed embedding $X\hookrightarrow \A_\kappa^n$ and $x\in X(\kappa)$ defining a smooth point. Then $\mathrm{rtd}_{B}(B|_X)=\dim X$ for every valuative ball $B$ of $\A_\kappa^n$ based at $x$ such that $\mathcal{A}|_B$ is non-trivial.
\end{corollary}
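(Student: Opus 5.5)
Since $x$ is a smooth point, $\widehat{\O_{X,x}}$ is a formal power series ring in $d=\dim X$ variables, and $\dim_\kappa T_xX=d$. The plan is to show two inequalities: that $B|_X$ is $T_xX$-riso-trivial on $B$, which gives $\mathrm{rtd}_B(B|_X)\geq d$, and that no subspace of dimension larger than $d$ can work, which gives the matching upper bound.

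\textbf{Reductions and the projection.} First reduce to the case that $x$ is the origin, $T_xX$ is a $\kappa$-linear subspace, and the coordinates $l_1,\dots,l_n$ are adapted as in Proposition \ref{Prop: Projection is risometry}; translations are risometries, so this is harmless. By Proposition \ref{Prop: Projection is risometry}, $\pi_s\colon B_{X,x}\to T_xX(\O_K)$ is a risometry onto its image. Smoothness makes this image everything. The induced map $\widehat{\O_{T_xX,x}}\to\widehat{\O_{X,x}}$ sends $l_{r+1},\dots,l_n$ to a regular system of parameters, hence is an isomorphism of complete local rings. Consequently every local homomorphism $\widehat{\O_{T_xX,x}}\to\O_K$ comes from a unique arc in $B_{X,x}$, so $\pi_s$ is a risometric bijection $B_{X,x}\to B_{T_xX,x}$.

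\textbf{Restricting to $B$.} Let $B$ have radius $\gamma$ and center $\beta\in X(\O_K)$; we may take the center on $X$, since $B|_X$ is assumed non-trivial. Risometries preserve valuative distances, and the coordinates $l_i$ with $i>r$ are unchanged by $\pi_s$. Hence $\pi_s$ maps $B|_X=B\cap B_{X,x}$ bijectively onto $B_{T_xX,\pi_s(\beta)}(\gamma)$. Concretely, for arcs $\alpha$ on $X$ we have $v(\alpha-\beta)=v(\pi_s\alpha-\pi_s\beta)$, and every point of that ball is hit. The image ball $C:=B_{T_xX,\pi_s(\beta)}(\gamma)$ lies inside $B$ and satisfies $C+B_{T_xX,0}(\gamma)=C$. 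So it is $T_xX$-translation-invariant on $B$, and $\pi_s$ is a $T_xX$-straightener. This gives $\mathrm{rtd}_B(B|_X)\ge d$.

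\textbf{Upper bound.} Suppose $B|_X$ is $W$-riso-trivial on $B$ via $\varphi$. Composing $\varphi$ with $\pi_s^{-1}$ shows that $C\subset T_xX(\O_K)$ is $W$-riso-trivial on $B$. Lemma \ref{almost-preserve-directions} then forces $W\subset T_xX$, so $\dim W\le d$ and equality holds.

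\textbf{Main obstacle.} I expect the delicate step to be surjectivity of $\pi_s$ onto the full ball of radius $\gamma$. This requires lifting arcs through the formal isomorphism, using only that $\O_K$ is complete, and checking that valuative distances are exactly preserved in both directions so that balls correspond.
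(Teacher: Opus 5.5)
Your strategy is the paper's. Its proof is the one-line observation that $\pi_s$ is a risometry on $B_{X,x}$ (Proposition~\ref{Prop: Projection is risometry}) and is surjective onto $B_{T_xX,x}$ by formal smoothness; the upper bound, which you make explicit via Lemma~\ref{almost-preserve-directions}, is left implicit there.

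\textbf{A false step.} You claim that $C=B_{T_xX,\pi_s(\beta)}(\gamma)$ lies inside $B$, and this fails for general balls. The projection moves the center by $\pi_s(\beta)-\beta$, whose coordinates are the $l_i(\beta)$ with $i\le r$. Their valuation is bounded below only in terms of $v(\beta)$ (it is at least $2v(\beta)$), not in terms of $\gamma$. Take the parabola $y=x^2$, $\beta=(t,t^2)$ with $v(t)=1$, and $\gamma=3$. Then $C=\{(a,0) : v(a-t)>3\}$, which is disjoint from $B$ because $v(t^2)=2$. So $\pi_s$ is not a $T_xX$-straightener on $B$ in the sense of Definition~\ref{def: riso-triviality}, which requires the image to lie in $B$.

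\textbf{Repair for the lower bound.} Post-compose with the translation $\tau_{\beta-\pi_s(\beta)}$. The composite is still a risometry, and its image is $\beta+B_{T_xX,0}(\gamma)$. This set lies in $B$ and is $T_xX$-translation-invariant on $B$.

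\textbf{Repair for the upper bound.} The same recentering is needed there. The phrase ``$C$ is $W$-riso-trivial on $B$'' is meaningless when $C\cap B=\varnothing$. Set $B'=\tau_{\pi_s(\beta)-\beta}(B)$, the ball of radius $\gamma$ centered at $\pi_s(\beta)$, so that $C=B'\cap T_xX(\O_K)$. Then $\tau_{\pi_s(\beta)-\beta}\circ\varphi\circ\pi_s^{-1}$ is a $W$-straightener of $C$ on $B'$, and Lemma~\ref{almost-preserve-directions} gives $W\subset T_xX$. This is exactly the passage to ``a ball of the same radius centered at a point of $\pi_s(\mathrm{supp})$'' in Proposition~\ref{Prop: Correspondence of rtspaces}, which you could also cite directly.

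For balls centered at $x$ itself, such as $B_{\A_\kappa^n,x}$ (the case used later in the paper), $\pi_s(\beta)=\beta$ and your argument is correct as written.

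\textbf{On surjectivity.} For the step you flag as the main obstacle, you need not evaluate power series in $\O_K$. The map $\pi_s\colon X\to T_xX$ is \'etale at the smooth point $x$, and $\O_K$ is Henselian (it is the valuation ring of an algebraically closed field). Hence every $\O_K$-point of $T_xX$ based at $x$ lifts uniquely to an $\O_K$-point of $X$ based at $x$.
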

\begin{proof}
This is just a consequence of Proposition \ref{Prop: Projection is risometry} and the fact that any projection $\pi_{s}:B_{X,x}\to B_{T_xX,x}$ is surjective by formal smoothness.    
\end{proof}

\begin{proposition}\label{Prop: Correspondence of rtspaces}
    Consider a closed embedding $X\hookrightarrow \A_\kappa^n$ and $x\in X(\kappa)$. Fix a coordinates of $\A_\kappa^n$ determining a geometric presentation $T_xX\hookrightarrow \A_\kappa^n$, and choose a section $s\colon\m_{X,x}/\m_{X,x}^2\to \m_{X,x}$ with associated map $\pi_s\colon B_{X,x}\to T_x{X(\O_K)}$. Then, we have for every $\O_K$-tuple $\mathcal{A}$ of $X$ a dimension preserving correspondence: 

    $$\left\{\begin{array}{l}
        \underset{k\text{-linear subsp.}}{ W\subset\A_\kappa^n}~\Big|\text{\scriptsize$\mathcal{A}|_B$ is $W$-riso-trivial on $B$}
    \end{array}\right\}\stackrel{1-1}{\longleftrightarrow}\left\{\begin{array}{l}
        \underset{k\text{-linear subsp.}}{\tilde{W}\subset T_xX}~\Big|\text{\scriptsize$\pi_s(\mathcal{A}|_B)$ is $\tilde{W}$-riso-trivial on $\pi_s(B)$}
    \end{array}\right\}$$
    for every valuative ball $B$ of $\A_\kappa^n$ based at $x$ such that $\mathcal{A}|_B$ is non-trivial. In particular, the riso-triviality space $\mathrm{rtsp}_{B}(\mathcal{A})\subset \A_\kappa^n$ sits canonically inside $T_xX$ up to translating the latter to the origin and we have:
    $$\mathrm{rtd}_{B}(\mathcal{A})=\mathrm{rtd}_{\pi_s(B)}\big(\pi_s(\mathcal{A})\big).$$
    This says that the dimension $\mathrm{rtd}_{B}(\mathcal{A})$ is intrinsic to $X$ and depends only on $B|_X$. 
\end{proposition}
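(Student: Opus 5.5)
The plan is to transport straighteners back and forth along the risometry $\pi_s$ of Proposition \ref{Prop: Projection is risometry}, using Lemma \ref{almost-preserve-directions} to force all relevant subspaces into $T_xX$. Coordinates $l_1,\dots,l_n$ are fixed so that $T_xX$ (translated to the origin) is cut out by $l_1,\dots,l_r$, and $\pi_s$ is the corresponding linear projection $\A_\kappa^n\to T_xX$. We may assume $\mathcal{A}=\mathcal{A}|_B$.

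\textbf{Step 1: riso-triviality directions lie in $T_xX$.} Suppose $\mathcal{A}|_B$ is $W$-riso-trivial on $B$ with straightener $\varphi$. By Lemma \ref{Lemma: Improved-Risometries} we may take $\varphi$ adapted to $W$. Lemma \ref{almost-preserve-directions} cannot be applied verbatim, since $\mathrm{supp}(\mathcal{A})\subset X(\O_K)$ is not contained in $T_xX(\O_K)$. We rerun its argument instead. Pick $a\in\mathrm{supp}(\mathcal{A})$ with $\varphi(a)-\varphi(a_0)\in W$ nonzero, after translating so that a base arc $a_0$ is fixed. If $W\not\subset T_xX$, some linear form $l_i$ with $i\le r$ is nonzero on $W$. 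We then compare two facts:
\begin{itemize}
\item On differences of arcs of $X$, the forms $l_i$ with $i\le r$ have valuation strictly larger than the distance, as shown in the proof of Proposition \ref{Prop: Projection is risometry}.
\item On the translated set, differences along $W$ of full valuation exist.
\end{itemize}
Since $\varphi$ preserves rv-classes of differences, together these contradict the risometry condition. The most robust route is to take $a=\varphi^{-1}(\varphi(a_0)+w)$ for $w\in W$ with $v(l_i(w))=v(w)=\gamma$. Then $v(l_i(a-a_0))>v(a-a_0)=v(w)$, while $\mathrm{rv}(a-a_0)=\mathrm{rv}(w)$ forces $v(l_i(a-a_0))=v(l_i(w))$, which is impossible. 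Hence $W\subset T_xX$.

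\textbf{Step 2: the bijection.} With $W\subset T_xX$, we send $W$ to $\tilde W=W$, viewed inside $T_xX$. The correspondence is the identity on subspaces, so it preserves dimension. The two directions are as follows.
\begin{itemize}
\item Forward. Given a $W$-straightener $\varphi$ of $\mathcal{A}$ on $B$, take $\pi_s\circ\varphi\circ\pi_s^{-1}$ on $\pi_s(\mathrm{supp}\,\mathcal{A})$. This is a composition of risometries, hence a risometry. Its translation invariance requires $\pi_s(\varphi(A))$ to be $W$-invariant on $\pi_s(B)$. To avoid having to check this directly, first replace $\varphi$ by $\pi_s\circ\varphi$ itself, because $\pi_s$ restricted to $\varphi(\mathrm{supp})$ is a risometry. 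The point is that $\varphi(\mathrm{supp})$ differs from $\mathrm{supp}$ by rv-preserving moves, so its differences still satisfy the $\mathfrak m^2$ property: their $l_{\le r}$-coordinates are of strictly bigger valuation. Then $W$-invariance is preserved, because $\pi_s$ is linear and fixes $W$, so $\pi_s(C+B_{W,0}(\gamma))=\pi_s(C)+B_{W,0}(\gamma)$.
\item Backward. Given a $\tilde W$-straightener $\psi$ of $\pi_s(\mathcal{A})$ on $\pi_s(B)$, the composition $\psi\circ\pi_s$ is a risometry into $\pi_s(B)\subset B$ whose image is $\tilde W$-translation-invariant. It is therefore a $\tilde W$-straightener of $\mathcal{A}$ on $B$ in $\A_\kappa^n$. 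Here we use that $B\cap T_xX(\O_K)=\pi_s(B)$ as balls of the same radius, and Lemma \ref{almost-preserve-directions} to identify riso-triviality on $B|_{T_xX}$ with riso-triviality on $B$.
\end{itemize}

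\textbf{Step 3: consequences.} Taking maxima over the two sets gives the equality of rtd. By Proposition \ref{Lemma: riso-triviality es aditiva}, both riso-triviality spaces are the unique maximal elements, so $\mathrm{rtsp}_B(\mathcal{A})\subset T_xX$. Independence from the choice of $s$ and of the embedding follows because the right-hand side only involves $B|_X$ via $\pi_s$. Any two splittings differ by an automorphism of $\widehat{\O_{X,x}}$-compatible type, whose effect $\pi_{s'}\circ\pi_s^{-1}$ is again a risometry fixing tangent directions, and risometries preserve riso-triviality of a given $W$.

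\textbf{Main obstacle.} I expect the forward direction of Step 2 to be the main obstacle. The difficulty is controlling that the straightened set $\varphi(\mathrm{supp})$, which no longer lies on $X$, still projects injectively and risometrically under $\pi_s$. I would handle this using the rv-preservation of differences together with Step 1's valuation estimate.
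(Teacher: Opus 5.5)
Your overall route is the paper's: transport straighteners along the risometry $\pi_s$ of Proposition~\ref{Prop: Projection is risometry}, and force the directions into $T_xX$.

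Two parts of your argument are correct and more explicit than the paper's short proof:
\begin{itemize}
\item Step~1 runs the $\m^2$ valuation estimate directly on arcs of $X$, which is needed because Lemma~\ref{almost-preserve-directions} does not apply verbatim. One small adjustment: take any $w\in B_{W,0}(\gamma)$ with $v(l_i(w))=v(w)$; the exact value $\gamma$ plays no role.
\item The forward direction checks that $\pi_s$ is still a risometry on the straightened set $\varphi(\mathrm{supp}\,\mathcal{A})$, whose differences inherit the $\m^2$ estimate. It then uses that $\pi_s$ is linear and fixes $W$.
\end{itemize}

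The backward direction, however, rests on a false claim: that $\pi_s(B)\subset B$, and that $B\cap T_xX(\O_K)=\pi_s(B)$. This holds when $B$ is centered at $x$, but not for a general ball based at $x$. Every point of $B$ is a center, so take $c\in\mathrm{supp}(\mathcal{A}|_B)$. Then $\pi_s(c)\notin B$ as soon as $\min_{i\le r}v(l_i(c))$ is smaller than the radius.

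Concretely, take the parabola $y=x^2$, with $c=(u,u^2)$, $v(u)=\delta>0$, and $B$ of radius $\gamma>2\delta$ around $c$. Every point $(p,0)$ of $T_xX(\O_K)$ satisfies $v\big((p,0)-c\big)\le 2\delta<\gamma$. So $B\cap T_xX(\O_K)=\varnothing$, while $\pi_s(B)$ is a nonempty ball around $(u,0)$. Hence $\psi\circ\pi_s$ does not land in $B$, which Definition~\ref{def: riso-triviality} requires.

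The repair is a single translation. The map $\tau_{c-\pi_s(c)}\circ\psi\circ\pi_s$ is still a risometry. It carries $\pi_s(B)$ into $B$, because $\tau_{c-\pi_s(c)}$ sends $\pi_s(c)$ to $c$ and preserves distances. Translating does not affect $\tilde W$-translation-invariance. This is what the paper's proof does implicitly when it speaks of a ball of the same radius centered at a point of $\pi_s(\mathrm{supp}(\mathcal{A}|_B))$. Lemma~\ref{almost-preserve-directions} is not needed for this direction. With this fix your argument goes through.
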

\begin{proof}
    We may assume $x=0_{\A_\kappa^n}$, and so we can choose coordinates of $\A_\kappa^n$ and a projection to the realized tangent space $T_xX\hookrightarrow\A_\kappa^n$ in such a way that it agrees with the morphism $\pi_s\colon X\to T_xX$ defined by $s$. By Proposition \ref{Prop: Projection is risometry} and the fact that inverses of risometries are risometries, $\mathcal{A}$ is $W$-riso-trivial on $B$ if and only if $\pi_s(\mathcal{A})$ is on a valuative ball $\tilde{B}$ of $V$ of the same radius centered at a point in $\pi_s(\mathrm{supp}(\mathcal{A}|_B))$. When $\mathcal{A}|_B$ is non-trivial, we can take $\tilde{B}$ to be $\pi_s(B)$. Furthermore, Lemma \ref{almost-preserve-directions} guarantees $W\subset T_xX$ in the cases where $W$-riso-triviality of $\mathcal{A}$ is satisfied and the claimed correspondence is provided by $s$. We also obtain that $\mathrm{rtd}_{B}(\mathcal{A})$ is intrinsic to $X$, since this amounts to the fact that two different embeddings can be compared through $T_xX$ via a same chosen section $s$.
\end{proof}
Now it is possible to extend our main definitions to the non-embedded, global context. First, we have to observe that every projection to linear subspace $\pi_V\colon \A_\kappa^n\to V$ not only sends valuative balls of $\A_\kappa^n$ to valuative balls of $V$, but it does so in a surjective way. Similarly, when $B_{X,x}\subset V(\O_K)$ every valuative ball of $X$ based at $x\in X(\kappa)$ can be regarded as a valuative ball of $V$ restricted to $X(\O_K)$. This can be used to make sense of the following:

\begin{definition}
    Let $\mathcal{A}$ be a $\O_K$-tuple of a $\kappa$-scheme $X$, $x\in X(\kappa)$ and $B\subset B_{X,x}$ a valuative ball. The \emph{riso-triviality space of $\mathcal{A}$ on $B$} is the canonical $\kappa$-linear subspace:
    $$ \mathrm{rtsp}_B(\mathcal{A})\subset T_xX, $$ 
    corresponding to $\mathrm{rtsp}_{\tilde{B}}(\mathcal{A})$ via Proposition \ref{Prop: Correspondence of rtspaces} among all affine opens $U\subset X$ through which $x$ factors, $U\hookrightarrow \A_\kappa^n$ closed embeddings featuring $x$ as the origin, linear projections $\pi_s\colon \A_\kappa^n\to T_xX$ and valuative balls $\tilde{B}\subset \A_\kappa^n(\O_K)$ such that $\pi_s(\tilde{B})|_X=B$ and $\mathcal{A}|_{\tilde{B}}$ is non-trivial. The riso-triviality dimension of $\mathcal{A}$ is $\mathrm{rtd}_B(\mathcal{A})=\dim_\kappa \mathrm{rtsp}_B(\mathcal{A})$.
\end{definition}
The following Proposition says that our non-embedded notion of riso-triviality dimensions can be recovered from the formal germs of the singularities defined by points $x\in X(\kappa)$.

\begin{corollary}\label{Cor: rtd invariante formal} Let $X,Y$ be two $\kappa$-schemes. Suppose there are points $x\in X(\kappa)$ and $y\in Y(\kappa)$ with a given isomorphism: $$\Psi\colon \widehat{\O_{X,x}}\stackrel{\simeq}{\longrightarrow}\widehat{\O_{Y,y}}.$$
Then, the induced bijection $\psi\colon B_{X,x}\to B_{Y,y}$ preserves riso-triviality dimensions. That is, for every $\O_K$-tuple $\mathcal{A}$ of $X$ such that $\mathrm{supp}(\mathcal{A})\subset B_{X,x}$ and valuative ball $B\subset B_{X,x}$ we have: $$ \mathrm{rtd}_B(\mathcal{A})=\mathrm{rtd}_{\psi(B)}\big(\psi(\mathcal{A})\big).$$
\end{corollary}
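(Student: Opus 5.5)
The plan is to transport everything into the common tangent space using Proposition \ref{Prop: Correspondence of rtspaces}. The isomorphism $\Psi$ induces an isomorphism of cotangent spaces $\overline{\Psi}\colon \m_{X,x}/\m_{X,x}^2\to \m_{Y,y}/\m_{Y,y}^2$, hence a $\kappa$-linear isomorphism $T_xX\simeq T_yY$. I will fix a section $s$ for $X$ and choose for $Y$ the section $s'$ compatible with $\Psi$. Concretely, $s'$ is obtained by lifting $\Psi(s(\bar e_i))$ from $\widehat{\O_{Y,y}}$ to elements of $\m_{Y,y}$ modulo $\m_{Y,y}^2$. Its images then generate $\widehat{\m}_{Y,y}$ by Nakayama, and they agree with $\Psi\circ s$ up to terms in $\widehat{\m}_{Y,y}^{2}$. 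By Cohen's structure theorem, the embeddings $\pi_s\colon \mathrm{Spf}\,\widehat{\O_{X,x}}\hookrightarrow \mathrm{Spf}\,\widehat{\O_{T_xX,x}}$ and $\pi_{s'}$ then fit into a diagram with $\Psi$ and the linear identification $T_xX\simeq T_yY$. This diagram commutes up to the second-order discrepancy just described.

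The key step is to show that the composite
$$\pi_{s'}\circ\psi\circ\pi_s^{-1}\colon \pi_s(B_{X,x})\longrightarrow \pi_{s'}(B_{Y,y}),$$
read in $T_xX(\O_K)$ after the linear identification, is a risometry. I will argue as in Proposition \ref{Prop: Projection is risometry}. For arcs $\alpha,\beta\in B_{X,x}$, write the coordinates $l'_j$ of $T_yY$ pulled back via $\Psi$ as $l_j+q_j$ with $q_j\in\widehat{\m}_{X,x}^2$. Then
$$v\big(q_j(\alpha)-q_j(\beta)\big)>v(\alpha-\beta),$$
since $q_j$ is a convergent sum of products of two elements of $\widehat{\m}$. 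Each such product $fg$ satisfies $fg(\alpha)-fg(\beta)=f(\alpha)(g(\alpha)-g(\beta))+g(\beta)(f(\alpha)-f(\beta))$, and the factors $f(\alpha)$, $g(\beta)$ have positive valuation. This gives the rv-condition for the composite.

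Once that holds, the conclusion follows. By Proposition \ref{Prop: Correspondence of rtspaces}, $\mathrm{rtd}_B(\mathcal{A})=\mathrm{rtd}_{\pi_s(B)}(\pi_s(\mathcal{A}))$ and likewise for $Y$. A risometry $\chi$ between subsets of $T_xX(\O_K)$ preserves riso-triviality: composing a $W$-straightener with $\chi^{-1}$ gives a $W$-straightener, since compositions of risometries are risometries. It also sends valuative balls to valuative balls of the same radius, because it preserves valuative distances. Hence the riso-triviality spaces correspond, possibly after translating so that the images of centres match, and the dimensions agree.

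I expect the main obstacle to be the valuative estimate for infinite sums in $\widehat{\m}^2$. To handle it I will write $q_j=\sum_k f_kg_k$ as a finite sum with $f_k,g_k\in\widehat{\m}$, which is possible since $\widehat{\m}^2$ is finitely generated. The product estimate above then suffices with no convergence issue. A secondary point is checking that $\psi(B)$ is again a valuative ball of the same radius, so that the right-hand side is defined. This follows because $\psi$ commutes with the truncations $\theta_\gamma$, as $\Psi$ induces isomorphisms of $\widehat{\O}/I_{>\gamma}$-point sets functorially.
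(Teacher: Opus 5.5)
Your proposal is correct. It follows the same skeleton as the paper: project to tangent spaces by the risometries $\pi_s$, identify $T_xX\simeq T_yY$ through the map induced by $\Psi$, and conclude with Proposition~\ref{Prop: Correspondence of rtspaces}.

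The two arguments differ in how the two projections are compared.

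\textbf{The paper's route.} It asserts that $s$ determines a section $\tilde s$ for $Y$ making the square with $\psi$ and $\tilde\Psi$ commute exactly. The comparison map is then just the linear isomorphism $\tilde\Psi$. The only section that achieves this is $\tilde s=\Psi\circ s\circ\overline{\Psi}^{-1}$. It takes values in $\widehat{\m}_{Y,y}$ and in general not in $\m_{Y,y}$. So $\pi_{\tilde s}$ does not come from coordinates on an affine embedding of $Y$, which is the setting in which Propositions~\ref{Prop: Projection is risometry} and~\ref{Prop: Correspondence of rtspaces} are proved.

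\textbf{Your route.} You choose an algebraic lift $s'$ and accept that the square commutes only modulo $\widehat{\m}^2$. You then show, via the product estimate, that the discrepancy map $\pi_{s'}\circ\psi\circ\pi_s^{-1}$ is a risometry. This is precisely the extension of the argument of Proposition~\ref{Prop: Projection is risometry} to genuinely formal second-order terms, which the paper's exact-commutation claim uses without saying so. It costs one extra estimate. In return, every cited result is applied in the setting where it was proved, and the same estimate shows that riso-triviality on $X$ does not depend on the chosen section.

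Two points deserve to be made explicit.
\begin{enumerate}
\item You need $s'$ to be realized by coordinates of an embedding. Shrink to an affine neighbourhood on which the lifts are regular, include them among the generators, and translate and subtract linear combinations of them from the remaining generators so that these lie in $\m_{Y,y}^2$.
\item The inequality $v\big(g(\alpha)-g(\beta)\big)\ge v(\alpha,\beta)$ is needed for $g\in\widehat{\m}_{X,x}$, not only for $g\in\m_{X,x}$, since your $q_j$ are not algebraic. This rests on treating arcs as homomorphisms out of $\widehat{\O_{X,x}}$, with the valuative distance controlling the completed maximal ideal. That is the same identification the paper uses to define $\psi$ in the first place, so it is legitimate here, but it should be stated.
\end{enumerate}
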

\begin{proof}
Recall that $\O_K$-points of $X$ centered at $x$ factor through the formal neighborhood of $x$ in $X$, and all $\O_K$-points of this formal neighborhood lift; the same happens for $Y$, therefore the induced bijection $\psi\colon B_{X,x}\to B_{Y,y}$ exists. The isomorphism also induces an isomorphism $\tilde{\Psi}\colon T_xX\to T_yY$, and furthermore allow us to associate to every splitting $s\colon \mathfrak{m}_{{X},x}/\mathfrak{m}_{{X},x}^2\to \mathfrak{m}_{{X},x}$ a splitting $\tilde{s}\colon \mathfrak{m}_{{Y},y}/\mathfrak{m}_{{Y},y}^2\to \mathfrak{m}_{{Y},y}$ in such a way that the projections $\pi_X\colon B_{X,x}\to T_x{X(\O_K)}$ and $\pi_Y\colon B_{{Y},y}\to T_yY(\O_K)$ make the following diagram commute:
\[\begin{tikzcd}
	{B_{X,x}} & {\widehat{(X)}_{x}(\O_K)} & {\widehat{(Y)}_y(\O_K)} & {B_{Y,y}} \\
	{T_x{X(\O_K)}} &&& {T_yY(\O_K)}
	\arrow["\cong"{description}, draw=none, from=1-1, to=1-2]
	\arrow["\psi", curve={height=-25pt}, from=1-1, to=1-4]
	\arrow["{\pi_s}"', from=1-1, to=2-1]
	\arrow["\Psi", from=1-2, to=1-3]
	\arrow["\cong"{description}, draw=none, from=1-3, to=1-4]
	\arrow["{\pi_{\tilde{s}}}", from=1-4, to=2-4]
	\arrow["{\tilde{\Psi}}", from=2-1, to=2-4]
\end{tikzcd}\]
Since both $\pi_X$ and $\pi_Y$ are risometries on any chosen local embedding by Proposition \ref{Prop: Projection is risometry}, the situation is reduced to a $\kappa$-scheme isomorphism of affine spaces, which are base changes of $\kappa$-schemes, and so the result follows from Proposition \ref{Prop: Correspondence of rtspaces}.
\end{proof}

\subsection{Tangent Cones and Obstructions to Riso-Triviality}
Having established riso-triviality spaces and dimensions of $\O_K$-tuples on valuative balls as formal invariants of singularities, it is a natural task to study how these notions behave at the more canonical $\O_K$-tuples and valuative balls at our disposal.
\begin{definition}\label{Def: rtsp of x}
    Let $k$ be a field. The \emph{riso-triviality space of a $k$-scheme $X$ at a closed point $x$} is the maximal $\kappa(x)$-linear subspace $\mathrm{rtsp}_x(X)\subset T_xX$ such that for every admissible field extension $K/\kappa$ over $k$: $$\mathrm{rtsp}_x(X)\times_k\spec(\kappa)=\mathrm{rtsp}_{B_{X_\kappa,x}}\big(X_\kappa(\O_K)\big),$$ as an equality of $\kappa$-linear subspaces of $T_xX_\kappa$. The riso-triviality dimension of $X$ at $x$ is $$\mathrm{rtd}_x(X):=\dim_{\kappa(x)}\mathrm{rtsp}_x(X).$$
\end{definition}
We will find constraints for this kind of riso-triviality spaces. For the rest of the section, we will restrict without loss of generality to the situation where $k$ is algebraically closed and we have a fixed admissible extension $K/\kappa$ over $k$ for which $\kappa=k$.

Let $x\in X(\kappa)$, set $A=\widehat{\O_{X,x}}$, $B=\kappa\llbracket x_1,\dots,x_n\rrbracket$ and consider a radical ideal $I=(f_1,\dots,f_r)\subset B$ such that $A\simeq B/I$. We fix an injective local $\kappa$-algebra homomorphism $\kappa\llbracket t\rrbracket\hookrightarrow \O_K$. This induces a further containment:
$$\widehat{X}_x(\kappa\llbracket t\rrbracket) \subset \widehat{X}_x(\O_K).$$
This also comes with an associated injective group homomorphism $\mathbb{Z}\subset \Gamma$. We use this for making sense of $t$-adic valuation in relation to the valuative structure of $\widehat{X}_x(\O_K)$. 
\begin{definition}
     We identify $\mathrm{gr}(B)\simeq \kappa[x_1,\dots,x_n]$. The geometric tangent cone of $X$ at $x$ is: $$C_x(X):=\spec\big(\kappa[x_1,\dots,x_n]/\mathrm{in} (I)\big)\hookrightarrow T_xX\simeq \spec\big(\mathrm{gr}(B)\big),$$
    where $\mathrm{in}(I)=\langle f_\mathrm{in}~| f\in I\rangle$, and for each $f$ we denote its lowest degree part by $f_\mathrm{in}$.
\end{definition}
\begin{definition}
    For each $\gamma\in \Gamma$ consider the following ideals of $\O_K$: $$I_{\geq \gamma}:=\{r\in \O_K~|~v(r)\geq \gamma\} ~~~,~~~I_{> \gamma}:=\{r\in R~|~v(r)>\gamma\}.$$
    From the quotients $\O_K^\gamma:=I_{\geq \gamma}/I_{>\gamma}$ the valuative grading of the ring can be defined: $$\mathrm{gr}_v \O_K=\bigoplus_{\gamma\in \Gamma}\O_K^\gamma.$$ 
\end{definition}
\begin{remark}\label{rmrk: grading of R and RV}
    It follows from Definition \ref{def: RV-relation} and Example \ref{ex: RV-quot and affine embeddings}, that the RV-quotient of an arc $\alpha=(\alpha_1,\dots,\alpha_n)\in T_x{X(\O_K)}$ with valuative order $\gamma$, is determined by the image of each $\alpha_i$ in $\O_K^\gamma$.
\end{remark}

\begin{definition}
     The RV-closure of $\widehat{X}_x(\kappa\llbracket t\rrbracket)\subset \widehat{X}_x(\O_K)$ is the following set:
     \begin{align*}
\overline{\mathcal{B}}:=&~\big(\mathrm{rv}|_{\widehat{X}_x(\O_K)}\big)^{-1}\circ\mathrm{rv}\big(T_xX(\kappa\llbracket t\rrbracket)\big)
    \end{align*}
    We clearly have $\widehat{X}_x(\kappa\llbracket t\rrbracket)\subset\overline{\mathcal{B}}$, and this is generally a strict inclusion.
    \end{definition} 
    For each $\alpha\in \overline{\mathcal{B}}$, take $\nu=v({\alpha})\in \Z_{>0}$. By construction, there exists an element $\beta\in T_xX(\kappa\llbracket t\rrbracket)$ such that the truncated image $\theta_\nu(\alpha)$, viewed as an element of $T_xX(\O_K^{\leq \nu})$, fits in a commutative diagram as follows:
    \[\begin{tikzcd}
	B & {\kappa\llbracket t\rrbracket} & R \\
	& {\O_K^{\leq \nu}}
	\arrow["\beta", dashed, from=1-1, to=1-2]
	\arrow["{\theta_\nu(\alpha)}"', from=1-1, to=2-2]
	\arrow[hook, from=1-2, to=1-3]
	\arrow[two heads, from=1-3, to=2-2]
\end{tikzcd}\]
    From this, we have $\theta_\nu(\alpha)=\big(\theta_\nu(\alpha)_1,\dots,\theta_\nu(\alpha)_n\big) =\big(\theta_\nu(\beta)_1,\dots,\theta_\nu(\beta)_n\big)\in T_xX(\O_K^{\leq \nu})$ and so every coordinate of this tuple can be regarded as an element of $(t^\nu)/(t^{\nu+1})\simeq \kappa$. Using this isomorphism, denote:
    $$\psi(\alpha):=\big(\theta_\nu(\alpha)_1,\dots,\theta_\nu(\alpha)_n\big) \in T_xX(\kappa).$$
    Of course, this identification depends only on the presentation of $\widehat{\O_{X,x}}$ and the inclusion $\kappa\llbracket t\rrbracket\subset R$, so we get a well defined set theoretic map: 
    \begin{align*}
    \psi\colon\overline{\mathcal{B}}\longrightarrow T_xX(\kappa)
\end{align*}

    The restricted map $\psi|_{\widehat{X}_x(\kappa\llbracket t \rrbracket)}$ can be described intrinsically in a different way: Every arc $\gamma\colon A\to \kappa\llbracket t\rrbracket$ induces a graded map $\mathrm{gr}(\gamma)\colon \mathrm{gr} ~A\to \mathrm{gr}~\kappa\llbracket t\rrbracket \simeq \kappa[T]$. The further evaluation $T=1$ defines a $\kappa$-point of the tangent cone $C_x(X)\subset T_xX$ which is precisely $\psi(\gamma)$. From this, we see that $\psi|_{\widehat{X}_x(\kappa\llbracket t \rrbracket)}$ factors through the inclusion $C_x(X)(\kappa)\hookrightarrow T_xX(\kappa)$. The following Lemma generalizes this property.
\begin{proposition}\label{Prop: RV Determines Reduced Cone}
    The set-theoretic map $\psi\colon \overline{\mathcal{B}}\to T_xX(\kappa)$ satisfies  $\mathrm{Im}(\psi)= C_x(X)(\kappa)$.
\end{proposition}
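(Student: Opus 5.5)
The plan is to prove the two inclusions $\mathrm{Im}(\psi)\subset C_x(X)(\kappa)$ and $C_x(X)(\kappa)\subset \mathrm{Im}(\psi)$ separately. The first is a leading-term computation in the valuative grading $\mathrm{gr}_v\O_K$. The second reduces, via the remark before the statement, to producing $\kappa\llbracket t\rrbracket$-arcs on $\widehat{X}_x$ with prescribed initial direction. For the origin $0\in C_x(X)(\kappa)$, use the constant arc at $x$ together with the obvious convention $\psi(x)=0$. From now on I only treat arcs with finite order and nonzero tangent vectors.

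\textbf{Inclusion $\subset$.} Let $\alpha\in\overline{\mathcal{B}}$ with $\nu=v(\alpha)\in\Z_{>0}$, and let $\beta\in T_xX(\kappa\llbracket t\rrbracket)$ be as in the construction of $\psi$. Writing coordinates, we have $\alpha_i=c_it^\nu+\varepsilon_i$ with $v(\varepsilon_i)>\nu$, where $c=(c_1,\dots,c_n)=\psi(\alpha)$. Since $v(\alpha)=\nu$, we get $c\neq 0$.

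Take $f\in I$ and decompose $f=f_{\mathrm{in}}+g$, with $f_{\mathrm{in}}$ homogeneous of degree $d$ and $g$ of order $>d$. Because $\alpha$ is an arc on $\widehat{X}_x$, we have $f(\alpha)=0$.

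1. Expanding multilinearly, $f_{\mathrm{in}}(\alpha)=f_{\mathrm{in}}(c)\,t^{d\nu}+(\text{terms of valuation}>d\nu)$.
2. Every monomial of $g$ evaluated at $\alpha$ has valuation $>d\nu$. This uses convergence of $g(\alpha)$ in $\O_K$, which follows from completeness and $v(\alpha_i)\ge\nu>0$.
3. Projecting to $\O_K^{d\nu}=I_{\geq d\nu}/I_{>d\nu}$ and using $\kappa\llbracket t\rrbracket\hookrightarrow\O_K$ compatibly with valuations, we get $f_{\mathrm{in}}(c)\,\overline{t^{d\nu}}=0$. Since $\overline{t^{d\nu}}\neq0$, this gives $f_{\mathrm{in}}(c)=0$.

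As $\mathrm{in}(I)$ is generated by the $f_{\mathrm{in}}$, it follows that $c\in C_x(X)(\kappa)$. By Remark \ref{rmrk: grading of R and RV}, this argument depends only on the rv-class of $\alpha$, as it should.

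\textbf{Inclusion $\supset$.} By the remark preceding the statement, $\psi$ restricted to $\widehat{X}_x(\kappa\llbracket t\rrbracket)\subset\overline{\mathcal{B}}$ is the map $\gamma\mapsto \mathrm{gr}(\gamma)|_{T=1}$. It therefore suffices, for each nonzero $c\in C_x(X)(\kappa)$, to find a formal arc $\gamma\colon A\to\kappa\llbracket t\rrbracket$ whose initial vector equals $c$.

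I would use the blow-up $\pi\colon \widetilde{X}\to\spec A$ at the maximal ideal. Its exceptional divisor is $E=\mathrm{Proj}(\mathrm{gr}A)$, and $[c]$ defines a closed point $e\in E$. Since $A$ is reduced and $E$ is a Cartier divisor, $E$ is nowhere dense in $\widetilde{X}$. Hence $e$ lies on some irreducible component of $\widetilde{X}$ not contained in $E$.

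I would then apply a curve selection statement in the excellent local ring $\O_{\widetilde{X},e}$ (after restricting to that component and normalizing a suitable curve through $e$ not contained in $E$). This produces a local homomorphism $\O_{\widetilde{X},e}\to\kappa\llbracket t\rrbracket$ whose generic point avoids $E$. Composing with $\pi$ gives $\gamma$ of some finite order $\nu$. The initial vector of $\gamma$ is $\lambda c$ for some $\lambda\in\kappa^\times$, because the arc lifts through the point $e=[c]$ of the exceptional divisor. Finally, since $\kappa$ is algebraically closed, I would reparametrize by $t\mapsto\mu t$ with $\mu^\nu=\lambda^{-1}$. This yields an arc with initial vector exactly $c$, so $\psi(\gamma)=c$.

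The main obstacle is this arc-existence step: a point of the projectivized reduced tangent cone must be hit by the lift of a genuine arc not contained in $E$. That is why only $C_x(X)_{\mathrm{red}}(\kappa)$ appears in the statement. I would handle it through the blow-up and the nowhere-density of $E$ rather than through an explicit computation.
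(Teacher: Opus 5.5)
Your proposal is correct and follows the paper's proof essentially step for step: the leading-term computation in $\O_K^{d\nu}$ gives $\subset$, and for $\supset$ you use the blow-up at $x$, a curve through $[c]\in E$ not contained in the Cartier divisor $E$, normalization to a $\kappa\llbracket t\rrbracket$-arc, and the rescaling $t\mapsto\mu t$ (the paper does this rescaling up front, as stability of the image under $\kappa^\ast$-scaling, which reduces the claim to surjectivity onto $\mathbb{P}C_x(X)(\kappa)$). One small tightening: step 2 need not rely on convergence of $g(\alpha)$, which is delicate in the valuation topology when $\Gamma$ has higher rank, since $g\in\mathfrak{m}_B^{d+1}$ is a finite $B$-linear combination of degree-$(d+1)$ monomials and this gives $v(\alpha(g))\geq (d+1)\nu>d\nu$ directly.
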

\begin{proof}
We first verify $\mathrm{Im}(\psi)\subset C_x(X)(\kappa)$. Let $\alpha\in \overline{\mathcal{B}}$ and consider its order $\nu\in \Z\subset\Gamma$. Notice that the image of $\alpha(f)$ in $\in \O_K^{\gamma_0}$, where $\gamma_0=\deg f_\mathrm{in}\cdot \nu$, can be identified naturally with $f_\mathrm{in}\big(\theta_\nu(\alpha)_1,\dots, \theta_\nu(\alpha)_n\big)\in (t^{\gamma_0})/(t^{\gamma_0+1})\subset \O_K^{\gamma_0}$. Indeed, from the valuation properties we have for any monomial $x^u=\prod_i x_i^{u_i}$:
\begin{align*}
    v(\alpha(x^u))&=u_1v\big(\alpha(x_1)\big)+\dots+u_nv\big(\alpha(x_n)\big)\\
    &\geq (u_1+\dots+u_n)\cdot v(\alpha)\\
    &=\deg(x^u)\cdot v(\alpha)
\end{align*}
Therefore, $\alpha(f)|_{\O_K^{\gamma_0}}$ depends only on the minimal graded part of $f$ evaluated at the $\nu$-graded part of each image $\alpha(x_i)$. In this way, for any $f\in I$, this latter evaluation $f_\mathrm{in}\big(\theta_\nu(\alpha)_1,\dots, \theta_\nu(\alpha)_n\big)$ has to vanish. This says, under the isomorphism $(t^\nu)/(t^{\nu+1})\simeq \kappa$, that $\psi(\alpha)$ is a $\kappa$-point of $T_xX$ vanishing at the initial ideal of $I$. Now we verify surjectivity of $\psi\colon \overline{\mathcal{B}} \to C_x(X)(\kappa)$: Every $\lambda\in \kappa^*$ defines an automorphism $\kappa\llbracket t\rrbracket \to \kappa\llbracket t \rrbracket$, sending $t\mapsto \lambda t$. Since $\kappa$ is algebraically closed, this automatically implies that $\psi(\overline{\mathcal{B}})$ is stable by scalar multiplication. In this way, it suffices to prove that the further composition:
    $$\overline{\mathcal{B}}\stackrel{\psi}{\longrightarrow} C_x(X)(\kappa)\longrightarrow \mathbb{P}C_x(X)(\kappa)$$
    is surjective, where $\mathbb{P}C_x(X)$ stands for the projectivized tangent cone of $X$ at $x$. Let $y\in \mathbb{P}C_x(X)(\kappa)$ and consider $\sigma\colon \tilde{X}\to X$ the blow-up of $X$ at $x$. We have $E=\mathrm{Exc(\sigma)}\simeq \mathbb{P}C_x(X)$, so $y$ defines a closed point of $E$ lying over $x$. Since $\tilde{X}$ is reduced, $E$ is Cartier, and because of prime avoidance, there exists an irreducible curve $C=\spec(S)\subset \tilde{X}$ passing through $y$ not contained in $E$. Since finite extensions of $\kappa$ are trivial, one of the components of the normalization of $S$ defines an arc $\beta\colon \spec(\kappa\llbracket t\rrbracket)\to \tilde{X}$ such that $\sigma(\beta)\colon \spec(\kappa\llbracket t\rrbracket)\to X$ corresponds to a non-constant arc based at $x$. It follows from the construction that the associated graded map $\mathrm{gr}(\sigma(\beta))\colon \mathrm{gr}A\to \mathrm{gr}~\kappa\llbracket t\rrbracket\simeq \kappa[T]$ defines a non-trivial $\kappa$-point of $C_x(X)$, via evaluation $T=1$, supported on the line that corresponds to $y\in \mathbb{P}C_x(X)$. Surjectivity of $\psi$ follows from the fact that $\psi(\sigma(\beta))$ is precisely the $\kappa$-point defined in this way.
\end{proof}

\begin{theorem}\label{Prop: invariant image in the cone}
    Let $x\in X(\kappa)$, and consider $W=\mathrm{rtsp}_x(X)$ as a $\kappa$-linear subspace of $T_xX$. Then, we have:
    $$ C_x(X)(\kappa)+W(\kappa)= C_x(X)(\kappa).$$
    In particular, we have a canonical containment $W\subset C_x(X)$ and a dimensional bound: $$\mathrm{rtd}_x(X)\leq \dim X.$$
\end{theorem}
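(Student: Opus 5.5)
We will show that for every $c\in C_x(X)(\kappa)$ and $w\in W(\kappa)$ we have $c+w\in C_x(X)(\kappa)$, using Proposition \ref{Prop: RV Determines Reduced Cone} to realize $c$ as $\psi(\alpha)$ for some $\alpha\in\overline{\mathcal{B}}$, and then using a $W$-straightener to move $\alpha$ in the direction $w$ while keeping track of leading terms. First we fix the setup. By Proposition \ref{Prop: Correspondence of rtspaces} we may work inside $T_xX\simeq\A_\kappa^n$ via $\pi_s$, which is a risometry on $B_{X,x}$ by Proposition \ref{Prop: Projection is risometry}. So we may regard $B_{X,x}$ as a subset of $B_{T_xX,0}$ that preserves rv-classes, with $W$ realized as a linear subspace of $T_xX$. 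By Proposition \ref{Prop: Improved riso}, in the case $B=B_{\A_\kappa^n,0}$, there is a $W$-straightener $\varphi_0$ adapted to a projection $\pi_W$ which preserves RV-quotients and fixes $0$. Its image $C=\varphi_0(B_{X,x})$ satisfies $C+B_{W,0}=C$.

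Next we carry out the translation step. Take $c=\psi(\alpha)$ with $v(\alpha)=\nu\in\Z_{>0}$ and $w\in W(\kappa)$. If $c=0$ there is nothing to prove, so assume $c\ne0$. Set $\tilde w:=t^\nu w\in W(\O_K)$; this has $v(\tilde w)\ge\nu>0$, so $\tilde w\in B_{W,0}$. Then $\varphi_0(\alpha)+\tilde w\in C$, and we put $\alpha':=\varphi_0^{-1}(\varphi_0(\alpha)+\tilde w)\in B_{X,x}$. We then compute the rv-class of $\alpha'$. The risometry condition applied to the pair $(\alpha',0)$ gives $\mathrm{rv}(\alpha')=\mathrm{rv}(\varphi_0(\alpha)+\tilde w)$. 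Since $\mathrm{rv}(\varphi_0(\alpha))=\mathrm{rv}(\alpha)$ and $\theta_\nu(\alpha)$ agrees with a $\kappa\llbracket t\rrbracket$-arc with leading coefficient vector $c$, the element $\varphi_0(\alpha)+\tilde w$ has $\nu$-th graded part $t^\nu(c+w)$ modulo $I_{>\nu}$. Some care is needed, because $\mathrm{rv}$ of a sum is not always the sum of the $\mathrm{rv}$'s. Here, however, both summands lie in $I_{\ge\nu}^n$, and their images in $(\O_K^\nu)^n$ add to $c+w$ (times $t^\nu$). Two cases arise. If $c+w\neq0$, the valuation of the sum is exactly $\nu$ and its rv-class is that of the $\kappa\llbracket t\rrbracket$-arc $t^\nu(c+w)$. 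Hence $\alpha'\in\overline{\mathcal{B}}$ and $\psi(\alpha')=c+w$, which lies in $C_x(X)(\kappa)$ by Proposition \ref{Prop: RV Determines Reduced Cone}. If $c+w=0$, the conclusion is trivial since $0\in C_x(X)$.

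The main obstacle is the bookkeeping in this middle step. One must justify that $\varphi_0(\alpha)$ and $\alpha$ share the same image in $\O_K^\nu$ coordinatewise, which follows from Remark \ref{rmrk: grading of R and RV}. One must also check that $\varphi_0^{-1}$ and the passage through $\pi_s$ keep us inside $\widehat{X}_x(\O_K)$, so that $\alpha'$ genuinely lies in $\overline{\mathcal{B}}$ with the same $\psi$-value. Here we also need that $\overline{\mathcal{B}}$ is defined via rv-classes at integer valuations, which holds because $\nu\in\Z$ and $\tilde w$ has coefficients in $\kappa\llbracket t\rrbracket$.

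Finally, the consequences follow. Since $0\in C_x(X)(\kappa)$, the stability property gives $W(\kappa)\subset C_x(X)(\kappa)$. Because $\kappa$ is algebraically closed and $W$ is reduced, this yields $W\subset C_x(X)$, at least at the level of reduced structures. Then $\mathrm{rtd}_x(X)=\dim W\le\dim C_x(X)=\dim X$, using the classical equality between the dimension of the tangent cone and the local dimension, which holds for pure-dimensional $X$.
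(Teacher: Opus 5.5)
Your argument is the paper's argument. You realize $c$ as $\psi(\alpha)$ via Proposition~\ref{Prop: RV Determines Reduced Cone}, take the RV-preserving adapted straightener from Proposition~\ref{Prop: Improved riso}, translate $\varphi_0(\alpha)$ by $t^{v(\alpha)}w$ inside the $W$-invariant image, pull back, and read off $\psi$ from the rv-class. Your case split on whether $c+w=0$ is in fact more careful than the paper's write-up. The paper asserts $\psi\circ\varphi^{-1}(\varphi(\alpha)+t^{v(\alpha)}w)=\psi(\alpha)+w$ without noting that when $\psi(\alpha)+w=0$ the valuation of the sum jumps, possibly to a non-integer value, so the pulled-back arc need not lie in $\overline{\mathcal{B}}$. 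As you say, that case is harmless because $0\in C_x(X)(\kappa)$.

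There is one slip that does matter. The sentence ``if $c=0$ there is nothing to prove'' is false. For $c=0$ the required statement is $w\in C_x(X)(\kappa)$. This is exactly the instance of the stability property you invoke in your last paragraph to get $W(\kappa)\subset C_x(X)(\kappa)$, i.e.\ the ``in particular'' part of the theorem. Your main step cannot produce it, since every $\alpha$ with $v(\alpha)=\nu<\infty$ has $\psi(\alpha)\neq 0$.

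The repair uses the same machinery. Take $\alpha$ to be the constant arc at $x$, which lies in $B_{X,x}$ and satisfies $\varphi_0(\alpha)=0$. Then $tw\in C$ by $B_{W,0}$-invariance, and $\alpha':=\varphi_0^{-1}(tw)$ has $\mathrm{rv}(\alpha')=\mathrm{rv}(tw)$. Hence $\alpha'\in\overline{\mathcal{B}}$ and $\psi(\alpha')=w$ for $w\neq 0$. The paper's proof has the same blind spot, since $t^{v(\alpha)}$ is meaningless for the constant arc.

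Finally, your hedge ``at least at the level of reduced structures'' is unnecessary. $W$ is reduced and $\kappa$ is algebraically closed, so $W(\kappa)\subset C_x(X)(\kappa)$ gives $W\subset C_x(X)_{\mathrm{red}}\subset C_x(X)$ as closed subschemes.
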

\begin{proof}
Note that there is a set theoretic function $\overline{\psi}\colon \mathrm{RV}_{\O_K}(T_xX)\to T_xX(\kappa)$ making the following diagram commute:
\[\begin{tikzcd}
	{\overline{\mathcal{B}}} & {T_xX(\kappa)} \\
	{\mathrm{RV}_{\O_K}(T_xX)}
	\arrow["\psi", from=1-1, to=1-2]
	\arrow["{\mathrm{rv}}"', from=1-1, to=2-1]
	\arrow["{\overline{\psi}}"', dashed, from=2-1, to=1-2]
\end{tikzcd}\]
Indeed, the image of an arc $\alpha\in\overline{\mathcal{B}}$ depends only on its tuple decomposition and the grading of $\O_K$, thus from the same reasoning as in Remark \ref{rmrk: grading of R and RV} we get that the map $\psi$ factors through the RV-quotient $\mathrm{rv}\colon \overline{\mathcal{B}}\to \mathrm{RV}_{\O_K}(T_xX)$, and so the extension $\overline{\psi}$ exists. Consider the associated projection $\pi_W\colon T_x{X(\O_K)} \to W(\O_K)$. By Proposition \ref{Prop: Improved riso} there exists a $W$-straightener $\varphi\colon B_{X,x}\to C=(\pi_W|_{B_{X,x}})^{-1}(x)+B_{W,x}$ adapted to $W$ and preserving RV-classes. In this way, we get the following commutative diagram:
\[\begin{tikzcd}
	{B_{X,x}} & {\overline{\mathcal{B}}} & {T_xX(\kappa)} \\
	C && {\mathrm{RV}_{\O_K}(T_xX)}
	\arrow["\varphi"', from=1-1, to=2-1]
	\arrow["{{\mathrm{rv}}}", from=1-1, to=2-3]
	\arrow[hook', from=1-2, to=1-1]
	\arrow["\psi", from=1-2, to=1-3]
	\arrow["{{\mathrm{rv}}}"', from=2-1, to=2-3]
	\arrow["{{\overline{\psi}}}"', from=2-3, to=1-3]
\end{tikzcd}\]
Recall that $C$ is $W$-translation invariant on $B_{T_xX,0}$ with the further restriction $(\pi_W|_C)\circ \varphi=\pi_W|_{B_{X,x}}$. Thus, for every $\alpha\in \overline{\mathcal{B}}\subset B_{X,x}$ and $w\in W(\kappa)$, the element $\varphi(\alpha)+t^{v(\alpha)}w$ is by construction an element of $\varphi\big(\overline{\mathcal{B}}\big)$ satisfying:
\begin{align*}
    \psi\circ\varphi^{-1}(\varphi(\alpha)+t^{v(\alpha)}w)&=\overline{\psi}\circ\mathrm{rv}(\varphi(\alpha)+t^{v(\alpha)}w)\\
    &=\overline{\psi}\circ \mathrm{rv}(\alpha+t^{v(\alpha)}w)\\
    &= \psi(\alpha)+w.
\end{align*}
This concludes $\psi\big(\overline{\mathcal{B}}\big)+W(\kappa)\subset \psi\big(\overline{\mathcal{B}}\big)$, and the result follows from Proposition \ref{Prop: RV Determines Reduced Cone}. 
\end{proof}
\begin{remark}
    Notice that the construction of the rv-closure $\overline{\mathcal{B}}$ is strictly necessary for our proof of Corollary \ref{Prop: invariant image in the cone}. Indeed, the element $\varphi^{-1}(\varphi(\alpha)+t^{v(\alpha)}w)$ which is used to produce a translation of $\psi(\alpha)$ by $w$, lies by definition in $\overline{\mathcal{B}}$ but has no reason to be contained in $\widehat{(X)_x}(\kappa\llbracket t\rrbracket)$.
\end{remark}

\begin{remark}
    The statement in Theorem \ref{Prop: invariant image in the cone}  is not enough to characterize a $\kappa$-linear subspace $W\subset T_xX$ as the riso-triviality space $\mathrm{rtsp}_x(X)$. See Example \ref{ex: cusp}.
\end{remark}


\section{Riso-Stratifications}
In this section we check that our riso-triviality notions are compatible with the original ones driving the riso-stratification process. This allow us to prove embedding independence of riso-stratifications and globalize their construction with a resulting canonical and functorial procedure for $k$-schemes, where $k$ is any field of characteristic 0.

\subsection{On The Compatibility of Geometric Setups}
 Let $K/\kappa$ be an admissible extension over a domain $R$ of characteristic 0. In this subsection we summarize the original geometric construction of the riso-triviality dimension invariant for subsets of $K^n$, which is the central notion underlying the riso-stratification process. Then, we prove compatibility with our notion of riso-triviality dimension for $\O_K$-tuples of $\A_\kappa^n$.

 Fix a group scheme isomorphism $\A_\kappa^n\simeq \mathbb{G}_a^m$, which yields an associated additive group isomorphism $\A_\kappa^n(K)\simeq K^n$. We use the inclusion $\A_\kappa^n(\O_K) \hookrightarrow\A_\kappa^n(K)$ to make sense of $\O_K$-tuples of $\A_\kappa^n$ as tuples of subsets of $K^n$. Just to make a distinction between the setups, we will use the adjective ``strong'' when recalling definitions from the original model-theoretic setting. 

\begin{definition}\cite{BWH25}*{2.1.12}
The valuation of $K$ is extended to $K^n$ via:
$$v\big((x_1,\dots,x_n)\big):= \min_{1\leq i\leq n}v(x_i).$$
The closed valuative ball of $K^n$ with radius $\gamma\in\Gamma$ centered at $x\in K^n$ is the set:
    $$B_{\geq \gamma}(x)=\{x'\in K^n~\big|~v(x'-x)\geq \gamma  \}.$$ Similarly, the open ball with the same radius and center is:
    $$B_{>\gamma}(x)=\{x'\in K^n~\big|~v(x'-x)> \gamma  \}.$$
\end{definition}
\begin{remark}
    We use different sub-indices which are compatible with our additive notation for the valuation, in contrast to our reference.
\end{remark}
\begin{remark}
    It is straightforward to check that our notions of valuative balls and distances are compatible with this extended valuation on $K^n$. In particular, for any $\alpha\in \A_\kappa^n(\O_K)$ and $\gamma\in\Gamma$ such that $\gamma> 0$, we have a natural comparison of closed valuative balls:
    $$B_{\A_\kappa^n,\alpha}(\gamma)=B_{\geq \gamma}(\alpha) \subset K^n.$$
    However, be careful that our slightly different definitions give a different comparison for open balls of radius $0$ centered at $\kappa$-points $x$:
    $$ B_{\A_\kappa^n,x}=B_{>0}(x).$$
\end{remark}

\begin{definition}\cite{BWH25}*{2.3.1}\label{def: old risometry}
    The \emph{strong rv-relation on $K^n$} is defined via:
    $$x\sim _\mathrm{(rv)} y \iff v(x-y)>v(x) \text{  or  }x=y.$$
    A strong risometry is a set-theoretic map $\varphi\colon A\to A'$ such that for every $x, y$ in $A$:
    $$\varphi(x)-\varphi(y)\sim_{(rv)} x-y .$$
\end{definition}
\begin{lemma}\label{lemma: risometrias coinciden}
    Let $A,A'\subset \A_\kappa^n(\O_K)$ be two infinitesimal sets and $\varphi\colon A\to A'$ a set-theoretic function. Then, $\varphi$ is a risometry in $\A_\kappa^n$ if and only if, $\varphi$ is a strong risometry in $K^n$.
\end{lemma}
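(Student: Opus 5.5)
The plan is to reduce both notions to the same valuative inequality on differences, so that the equivalence becomes a direct comparison of definitions. Fix $a,b\in A$ and put $\delta=a-b$ and $\delta'=\varphi(a)-\varphi(b)$. Since $A$ and $A'$ are infinitesimal, both differences are arcs of $\A_\kappa^n$ based at the origin. Under the coordinate identification $\A_\kappa^n(\O_K)\simeq(\O_K)^n$ of Example \ref{ex: RV-quot and affine embeddings}, the maximal ideal at the origin is generated by the coordinates $x_1,\dots,x_n$. Any element of $\m_{\A_\kappa^n,0}$ evaluates on an arc based at $0$ with valuation at least the minimum over the coordinates, by the ultrametric inequality. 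Hence the valuative order $v(\delta)$ of Definition \ref{Def: Val-Distance} equals $\min_i v(\delta_i)$, which is the extended valuation $v$ on $K^n$. Likewise $v(\delta,\delta')=v(\delta-\delta')$ computed in $K^n$.

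\textbf{Forward direction.} Suppose $\varphi$ is a risometry, so $\mathrm{rv}(\delta')=\mathrm{rv}(\delta)$. By Definition \ref{def: RV-relation}, either $\delta=\delta'$, or both arcs have the same base point (the origin) and $v(\delta'-\delta)>v(\delta)$. By the previous paragraph, this is exactly $\delta'\sim_{(\mathrm{rv})}\delta$ in $K^n$.

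\textbf{Reverse direction.} Suppose $\varphi$ is a strong risometry. The strong relation gives $v(\delta'-\delta)>v(\delta)$ or $\delta'=\delta$. Since $\delta$ and $\delta'$ are both based at $0$, the base-point clause of Definition \ref{def: RV-relation} holds automatically, so $\delta\sim_{\mathrm{rv}}\delta'$.

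Two side points remain. First, bijectivity: Definition \ref{Def: Risometries} demands a bijection, whereas the strong definition only asks for a map. The strong relation forces injectivity, because $\varphi(a)=\varphi(b)$ gives $v(0-\delta)>v(\delta)$, which is impossible unless $\delta=0$. So a strong risometry is a bijection onto its image, and the statement should be read with $A'$ equal to that image, as in the remark after Definition \ref{Def: Risometries}. Second, symmetry of the relation: $\sim_{(\mathrm{rv})}$ is applied here in the order $(\delta',\delta)$, and it is symmetric by the same argument as Lemma \ref{Lemma: RV-is-equivalence}.

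The only real obstacle is the identification of the intrinsic valuation $\min_{l\in\m}v(\alpha(l))$ with the coordinate minimum. This is the single computation I would write out carefully; everything else is unwinding definitions.
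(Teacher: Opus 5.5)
Your proof is correct and follows the paper's own argument. Both reduce the lemma to one observation: for arcs based at the origin, the intrinsic valuative order and distance of Definition \ref{Def: Val-Distance} coincide with the coordinate-minimum valuation on $K^n$, so $\sim_{\mathrm{rv}}$ and $\sim_{(\mathrm{rv})}$ agree on $B_{\A_\kappa^n,0}=B_{>0}(0_{K^n})$. Your remarks on injectivity (reading $A'$ as the image) and on symmetry of the relation make explicit details the paper leaves unstated.
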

\begin{proof}
    This just follows from the fact that our rv-relation $\sim_\mathrm{rv}$ from Definition \ref{def: RV-relation} coincides in $B_{\A_\kappa^n,0}=B_{> 0}(0_{K^n})$ with the restricted strong rv-relation $\sim_{(rv)}$. Indeed, from Definition \ref{Def: Val-Distance} we see that $v(x,y)=v(x-y)$ for each $x,y\in B_{\A_\kappa^n,0}$, where the latter can be considered as the restricted valuation from $K^n$. 
\end{proof}

\begin{definition}\cite{BWH25}*{2.3.6}
    Let $B\subset K^n$ be a valuative ball, $\mathcal{A}$ a non-empty finite tuple of subsets of $B$ and $\tilde{W}\subset K^n$ a $K$-vector subspace. We say that $\mathcal{A}$ is \emph{$\tilde{W}$-translation-invariant on $B$} if for every $x,x'\in B$ with $x-x'\in \tilde{W}$ we have:
    $$x\in A\iff x'\in A,$$
    for each $A\in \mathcal{A}$.
\end{definition}
\begin{definition}\cite{BWH25}*{2.3.8 and 2.3.13}\label{def: old riso-triviality}
    Let $B\subset K^n$ be a closed valuative ball of radius $\gamma$, $\mathcal{A}$ a finite tuple of sets and $W\subset \kappa^n$ a $\kappa$-linear subspace. We say that $\mathcal{A}$ is strongly $W$-riso-trivial on $B$ if there exists a strong risometry $\tilde{\varphi}\colon B\to B$ such that the tuple $\tilde\varphi(\mathcal{A}|_B)$ is $W(K)$-translation-invariant on $B$. The riso-triviality dimension of $\mathcal{A}$ on $B$ is: $$\underline{\mathrm{rtd}}_B(\mathcal{A}):=\max_{\underset{\mathcal{A}\textit{ is strongly $V$-riso-trivial on $B$}}{V\subset \kappa^n}}\dim_\kappa V.$$
\end{definition}
\begin{remark}
    We are using \cite{BWH25}*{2.3.9} to give a slightly modified but equivalent version of the definition of $W$-riso-triviality. That is, we are fixing a particular lift $\tilde{W}=W(\O_K)$ of the $\kappa$-linear subspace $W(\kappa)\subset \kappa^n$ with respect to the residue map defined in \cite{BWH25}*{2.3.7}. We also omit the necessity of existence of riso-triviality spaces for this setup.
\end{remark}
\begin{remark}
    Because we are specializing to the algebraic situation, we only define translation-invariantness and riso-triviality for sets, and omit the use of indicator functions \cite{BWH25}*{2.3.5}.
\end{remark}
Our definition of $W$-riso-triviality for $\mathcal{A}$ on a valuative ball $B\subset \A_\kappa^n(\O_K)$ is less restrictive, in the sense that it allows $\mathrm{supp}(\mathcal{A}|_B)$ as a domain of the corresponding $W$-straightener instead of the whole $B$. This change of perspective, which was indispensable for Proposition \ref{Prop: Projection is risometry}, recovers the same information:

\begin{lemma}\label{Lemma: Risometrias-extendibles} Let $A,A'\subset B_{\A_\kappa^n,x}(\gamma)$ be two subsets and $\varphi\colon A\to A'$ a given risometry. Then, there exists a risometry $\tilde{\varphi}\colon B_{\A_\kappa^n,x}(\gamma)\to B_{\A_\kappa^n,x}(\gamma)$ extending $\varphi$.
\end{lemma}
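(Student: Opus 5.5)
The plan is a Zorn's lemma argument combined with a one-point extension step, using spherical completeness of $K$ as the essential input; this is the algebraic counterpart of the risometry extension results of \cite{BWH25}. Write $B=B_{\A_\kappa^n,x}(\gamma)$. By Lemma \ref{lemma: risometrias coinciden} we may work with strong risometries inside $K^n$, and the condition on a pair $a,b$ becomes
$$v\big(\varphi(a)-\varphi(b)-(a-b)\big)>v(a-b).$$
Let $\mathcal{P}$ be the set of risometries $\psi\colon D\to D'$ with $A\subset D\subset B$, $D'\subset B$ and $\psi|_A=\varphi$, ordered by extension. The risometry condition only involves pairs of points, so the union of a chain in $\mathcal{P}$ is again in $\mathcal{P}$. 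Zorn's lemma therefore gives a maximal element $\psi\colon D\to D'$. It remains to show $D=B$ and $D'=B$.

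\textbf{One-point extension.} Let $b\in B\setminus D$. We look for $c\in B$ such that
$$v\big(c-\psi(a)-(b-a)\big)>v(b-a)\quad\text{for all } a\in D,$$
that is, $c\in U_a:=B_{>v(b-a)}\big(\psi(a)+b-a\big)$ for every $a\in D$.
\begin{itemize}
\item Such a $c$ automatically avoids $D'$: since $b\neq a$, the condition forces $c\neq\psi(a)$.
\item It also lies in $B$, because $c-\psi(a)$ has the same rv-class as $b-a$, whose valuation is $\geq\gamma$. For $\gamma=0$ one uses the open-ball description $B_{\A_\kappa^n,x}=B_{>0}(x)$ instead.
\item Setting $\psi(b)=c$ then gives a strictly larger element of $\mathcal{P}$, contradicting maximality.
\end{itemize}
So everything reduces to showing $\bigcap_{a\in D}U_a\neq\varnothing$.

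\textbf{The balls form a chain.} For $a,a'\in D$ with $v(b-a)\le v(b-a')$, the difference of the centres is $(\psi(a)-\psi(a'))-(a-a')$. This has valuation $>v(a-a')\ge\min\{v(b-a),v(b-a')\}=v(b-a)$. Hence the centre of $U_{a'}$ lies in $U_a$, and since $U_{a'}$ has radius at least that of $U_a$, the ultrametric inequality gives $U_{a'}\subset U_a$. So $\{U_a\}$ is a chain of open balls.

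\textbf{Main obstacle: nonempty intersection.} Spherical completeness is phrased for nested closed balls, while the $U_a$ are open, so this is where I expect the real work. There are two cases.
\begin{itemize}
\item If the radii $v(b-a)$ attain a maximum, the corresponding ball is the smallest one in the chain. Its centre then lies in the intersection.
\item If the radii have no maximum, then for each $a$ choose $a'$ with $v(b-a')>v(b-a)$. The closed ball $B_{\ge v(b-a')}\big(\psi(a')+b-a'\big)$ is contained in $U_a$. These closed balls form a nested chain with the same intersection as the $U_a$, and spherical completeness (every chain of closed balls has nonempty intersection) supplies $c$.
\end{itemize}
This shows $D=B$.

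\textbf{Surjectivity.} Apply the same one-point argument to $\psi^{-1}\colon D'\to D$. If some $b'\in B\setminus D'$ existed, we would obtain $c'\in B\setminus D$ such that $\psi\cup\{c'\mapsto b'\}$ is a risometry in $\mathcal{P}$. This again contradicts maximality, so $D'=B$. Thus $\psi=\tilde\varphi$ is a bijective risometry $B\to B$ extending $\varphi$.
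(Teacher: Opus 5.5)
Your proof is correct. Its frame matches the paper's: Zorn's lemma on partial risometric extensions, then surjectivity by running the extension step for $\psi^{-1}$. The difference lies in the one-point extension.

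The paper first extends a partial risometry to the valuative closure of its domain by continuity. Then, for $x_0$ outside this closed domain, it takes $y$ realizing $\max_a v(x_0,a)$ and sends $x_0\mapsto x_0+\tilde\varphi(y)-y$. You instead describe the admissible images of $b$ as $\bigcap_{a\in D}U_a$, show the $U_a$ form a chain, and invoke spherical completeness. When the maximum is attained, your $c$ is exactly the paper's translate. When the radii are unbounded ($b$ in the closure of $D$), your argument absorbs the continuity step, so you never need closed domains.

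What your route buys is that it does not require $\sup_{a\in D}v(b-a)$ to be attained. This can fail even for valuatively closed domains, e.g.\ $n=1$, $D=\{a : 0<v(a)<\delta\}$ for some $\delta\in\Gamma_{>0}$, and $b=0$, using that $\Gamma$ is divisible. The paper's nearest-point step passes over this case.

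Your route also makes explicit that the essential hypothesis is spherical completeness for chains of balls, applied coordinatewise in $K^n$. Mere completeness of the valuative topology, as the paper's footnote suggests, is not enough. Running your construction backwards, a nested chain of balls with empty intersection gives a risometry on a countable set that admits no extension.

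The paper's version is shorter and gives an explicit formula, but only under the tacit assumption that nearest points exist.
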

\begin{proof}
    Set 
 $\mathcal{F}_\varphi:=\left\{
     f \ \middle| \ f \ \text{\small is a risometry extending }\varphi\ \text{\small and } 
    \operatorname{dom}(f),\ \operatorname{codom}(f) \subset B_{\A_\kappa^n,x} 
    \right\},$
    and consider the partial order
    $f \prec g \iff \operatorname{dom}(f) \subset \operatorname{dom}(g) 
    \ \text{and} \ g|_{\operatorname{dom}(f)} \equiv f$.
    Every ascendant chain $\{f_i\}_{i\in I}$ in $\mathcal{F}_\varphi$ is bounded above by the risometry 
    $\psi\colon \bigcup_{i\in I} \operatorname{dom}(f_i) \to \bigcup_{i\in I} \operatorname{codom}(f_i),$
    defined by $x \mapsto f_j(x)$ for any $j \in I$ with $x \in \operatorname{dom}(f_j)$. Thus, by Zorn's Lemma, we get a maximal object $\tilde{\varphi} \in \mathcal{F}_\varphi$. Since we can use spherical completeness to get limits of Cauchy sequences, the equality $\mathrm{dom}(\tilde{\varphi}) = B_{\A_\kappa^n,x}$ is forced: Every $g \in \mathcal{F}$ can be extended to a risometry with valuatively closed domain by valuative continuity. Once we have closed domains, every $x_0 \in B_{\A_\kappa^n,x} \setminus \mathrm{dom}(g)$ has an associated $y \in \mathrm{dom}(g)$ such that 
    $v(x_0,y) = \max_{x \in \operatorname{dom}(\tilde{\varphi})} v(x_0,x) < \infty$. In this situation, it suffices to set $x_0 \mapsto x_0 + \tilde{\varphi}(y)-y$ to get an admissible extension of $g$. Finally, we must have $\operatorname{codom}(\tilde{\varphi}) = B_{\A_\kappa^n,x}$ since the inverse risometry cannot be extended. 
\end{proof}

\begin{corollary}\label{Cor: rtd son iguales}
    Let $\mathcal{A}$ be a $\O_K$-tuple of $\A_\kappa^n(\O_K)$ and $B$ an infinitesimal valuative ball of $\A_\kappa^n$ such that $\mathcal{A}|_B$ is non-trivial. Then, $\mathcal{A}$ is $W$-riso-trivial on $B$ if and only if, $\mathcal{A}$ is strongly $W$-riso-trivial on $B$. In particular:
    $$\underline{\mathrm{rtd}}_B(\mathcal{A})=\mathrm{rtd}_{B}(\mathcal{A}).$$
    That is, both notions of riso-triviality dimension coincide when applied to infinitesimal balls centered at $\O_K$-points of the support of $\mathcal{A}$.
\end{corollary}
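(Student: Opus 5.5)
The plan is to prove both implications directly by comparing straighteners, using Lemma \ref{lemma: risometrias coinciden} to pass between the two notions of risometry and Lemma \ref{Lemma: Risometrias-extendibles} to change domains. Throughout, $B=B_{\A_\kappa^n,x}(\gamma)$ is an infinitesimal ball with $\mathcal{A}|_B$ non-trivial. The equality $\underline{\mathrm{rtd}}_B(\mathcal{A})=\mathrm{rtd}_{B}(\mathcal{A})$ is then immediate, since both sides are maxima of $\dim_\kappa W$ over the same set of subspaces.

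\textbf{Strong to ours.} Suppose $\tilde\varphi\colon B\to B$ is a strong risometry such that $\tilde\varphi(\mathcal{A}|_B)$ is $W(K)$-translation-invariant on $B$. By Lemma \ref{lemma: risometrias coinciden}, $\tilde\varphi$ is a risometry in our sense. Its restriction $\varphi:=\tilde\varphi|_{\mathrm{supp}(\mathcal{A}|_B)}$ is then a risometry onto a subset $C\subset B$. It remains to check that $\varphi(\mathcal{A}|_B)$ is $W$-translation-invariant in the sense of Definition \ref{def: translation invariant}, i.e.\ that $A'+B_{W,0}(\gamma)=A'$ for each $A'\in\varphi(\mathcal{A}|_B)$. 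Take $a\in A'$ and $w\in B_{W,0}(\gamma)$. Then $a+w\in B$, and $(a+w)-a\in W(K)$. Strong translation-invariance therefore gives $a+w\in A'$.

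\textbf{Ours to strong.} Let $\varphi\colon \mathrm{supp}(\mathcal{A}|_B)\to C\subset B$ be a $W$-straightener. Lemma \ref{Lemma: Risometrias-extendibles} extends it to a risometry $\tilde\varphi\colon B\to B$, and by Lemma \ref{lemma: risometrias coinciden} this is a strong risometry. Since $\tilde\varphi$ is a bijection of $B$ extending $\varphi$, we have $\tilde\varphi(A\cap B)=\varphi(A\cap B)$ for each $A\in\mathcal{A}$. Now let $y,y'\in B$ with $y-y'\in W(K)$. Because both points lie in a ball of radius $\gamma$, we get $v(y-y')\geq\gamma$, so $y-y'\in B_{W,0}(\gamma)$; here we use that $W(K)\cap B_{\geq\gamma}(0)=B_{W,0}(\gamma)$ for a $\kappa$-linear $W$ with adapted coordinates. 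Our translation-invariance then yields $y\in\varphi(A)\iff y'\in\varphi(A)$, which is strong $W(K)$-invariance of $\tilde\varphi(\mathcal{A}|_B)$ on $B$.

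\textbf{Expected obstacle.} The delicate point is matching the balls. For radius $\gamma=0$ centred at a $\kappa$-point, the remark above identifies $B_{\A_\kappa^n,x}$ with the open ball $B_{>0}(x)$, not the closed ball $B_{\geq 0}(x)$ used in \cite{BWH25}. In that case I would read "strongly riso-trivial on $B$" as taken on this same subset of $K^n$, so that both directions above go through verbatim. Separately, I would check that Lemma \ref{Lemma: Risometrias-extendibles} is valid for the radius-$\gamma$ ball, since its proof is written for $B_{\A_\kappa^n,x}$; the same Zorn-plus-spherical-completeness argument works because every step stays inside $B$.
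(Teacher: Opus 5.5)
Your proposal is correct and follows the paper's proof essentially step for step. In one direction you restrict a strong straightener $\tilde\varphi\colon B\to B$ to $\mathrm{supp}(\mathcal{A}|_B)$; in the other you extend a $W$-straightener via Lemma \ref{Lemma: Risometrias-extendibles}; and in both you use that $W(K)$-differences of points of $B$ lie in $B_{W,0}(\gamma)$.

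You are slightly more careful than the paper on two points. You invoke Lemma \ref{lemma: risometrias coinciden} explicitly. You also read the radius-$0$ case as the open ball $B_{>0}(x)$, which is how the corollary is actually used in Theorem \ref{Thm: Local Riso-Strat Commutes}; the paper instead writes $B=B_{\geq\gamma}(x)$ with $\gamma\geq 0$.
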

\begin{proof}
    Fix $x\in\mathrm{supp}(\mathcal{A})$ and $B=B_{\geq \gamma}(x)$ with $\gamma\ge 0$.
    Suppose $\mathcal{A}$ is strongly $W$-riso-trivial on $B$ and let $\tilde{\varphi}\colon B\to B$ be a risometry such that the tuple $\tilde\varphi(\mathcal{A})$ is $W(K)$-riso-trivial on $B$. Any pair $x,y\in B$ satisfying $x-y\in W(K)$ implies the refined $x-y\in B_{W,0}(\gamma)$. From this, it follows that $\tilde\varphi(A)+B_{W,0}(\gamma)=\tilde{\varphi}(A)$ for every $A\in \mathcal{A}$ and so $\tilde{\varphi}(\mathcal{A})$ is $W$-translation-invariant on $B$ and the restricted $\tilde{\varphi}|_{\mathrm{supp}(\mathcal{A})}$ is a $W$-straightener for $\mathcal{A}$ on $B$. For the other implication, suppose $\mathcal{A}$ is $W$-riso-trivial and consider $\varphi\colon \mathrm{supp}(\mathcal{A})\to C$ a $W$-straightener for $\mathcal{A}$ on $B$. Consider now an extension $\tilde\varphi\colon B\to B$ of $\varphi$, which exists by Lemma \ref{Lemma: Risometrias-extendibles}. Notice that for every $x,y\in B$ such that $x-y\in W(K)$, we have the improved $x-y\in B_{W,0}(\gamma)$. Since this ball is defining locally $W$-translation-invariantness of $\varphi(\mathcal{A})$, by construction we have $x\in \tilde{\varphi}(A)=\varphi(A)$ if and only if $y\in \tilde{\varphi}(A)$, for every $A\in\mathcal{A}$. This verifies that both riso-triviality notions are the same for $\mathcal{A}$, and the result about riso-triviality dimensions follows directly from this.
\end{proof}

\begin{corollary}\label{Cor: rtdsp independent of K}
    Let $k$ be a field, $X$ a $k$-scheme and $x\in X(\kappa)$. Then, for every admissible field extension $K/\kappa$ over $k$ we have: $$\mathrm{rtsp}_x(X)\times_k\spec(\kappa)=\mathrm{rtsp}_{B_{X_\kappa,x}}\big(X_\kappa(K)\big)\subset T_xX_\kappa.$$ That is, the riso-triviality space of $X$ at $x$ can be computed from a unique admissible field extension $K/\kappa$ over $k$.
\end{corollary}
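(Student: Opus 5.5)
The plan is to show that the subspace $\mathrm{rtsp}_{B_{X_\kappa,x}}\big(X_\kappa(\O_K)\big)\subset T_xX_\kappa$ does not depend on the admissible extension $K/\kappa$, and that it descends to a $\kappa(x)$-subspace of $T_xX$. Once both facts hold, the maximal subspace in Definition \ref{Def: rtsp of x} exists and satisfies the displayed equality for every $K$.

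\textbf{Step 1: reduce to the strong notion.} Fix a closed embedding of an affine neighbourhood $U\hookrightarrow\A^n_k$ sending $x$ to the origin. By Proposition \ref{Prop: Correspondence of rtspaces}, the riso-triviality space of $X_\kappa(\O_K)$ on $B_{X_\kappa,x}$ equals the riso-triviality space of the tuple $\{X_\kappa(\O_K)\}$ on the ambient ball $B_{\A^n_\kappa,x}$. This space sits canonically inside $T_xX_\kappa$. Corollary \ref{Cor: rtd son iguales} then identifies $W$-riso-triviality on this ball with strong $W$-riso-triviality in the sense of \cite{BWH25}.

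There is one gap: the strong notion uses closed balls $B_{\geq\gamma}$, whereas $B_{\A^n_\kappa,x}=B_{>0}(x)$ is open. I would handle this with the radius-$0^+$ convention of \cite{BWH25}, or by noting that the straighteners in Lemma \ref{Lemma: Risometrias-extendibles} extend over $B_{>0}(x)$ by the same Zorn argument.

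\textbf{Step 2: independence of $K$.} Here I would invoke the definability results of \cite{BWH25}. For each $\kappa$-linear $W\subset\kappa^n$, the condition ``$X(K)$ is strongly $W$-riso-trivial on $B_{>0}(x)$'' is a first-order statement in the valued-field language with parameters in $\kappa$. By \cite{BWH25}, the riso-triviality space is the residue of a definable object. Since all admissible $K$ are algebraically closed valued fields of equicharacteristic $0$ with residue field $\kappa$, their theory over $\kappa$ is complete (model completeness of ACVF with a residue-field section). Hence the truth of the statement, and so the subspace, is the same for every admissible $K$.

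An alternative is to compare two admissible fields $K_1,K_2$ through a common spherically complete extension $K_3$. One would then show that straighteners descend or ascend, using that riso-triviality can be tested on $\kappa\llbracket t\rrbracket$-like data. I expect this model-theoretic transfer to be the main obstacle, because our own constructions never compare different $K$.

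\textbf{Step 3: descent.} The subspace $W_\kappa$ is stable under $\mathrm{Aut}(\kappa/k)$, because such automorphisms extend to automorphisms of a suitable admissible $K$ preserving $X(\O_K)$ and riso-triviality. Characteristic $0$ gives perfectness, so Galois descent yields a $\kappa(x)$-subspace $\mathrm{rtsp}_x(X)\subset T_xX$ with the required base change. For $x$ with $\kappa(x)\neq k$, apply this to each geometric point over $x$. Maximality is automatic, since the subspace is uniquely determined by its base change.
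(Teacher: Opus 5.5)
Your Steps 1 and 2 are essentially the paper's proof. After Proposition~\ref{Prop: Correspondence of rtspaces} and Corollary~\ref{Cor: rtd son iguales} identify the paper's riso-triviality with the strong notion of \cite{BWH25}, the paper simply invokes the definability of riso-triviality spaces \cite{BWH25}*{3.1.3} to get independence of $K$; that theorem is also what justifies your ``first-order'' claim, since riso-triviality a priori quantifies over arbitrary risometries. Your Step 3 (Galois descent to $\kappa(x)$) and the open-versus-closed-ball remark fill in details the paper leaves implicit and are fine as sketched, provided that for non-rational $x$ you use the stabilizer of the chosen geometric point in $\mathrm{Aut}(\kappa/k)$ rather than the whole group.
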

\begin{proof}
    Once the definitions of riso-triviality agree in embedded situations, we can in particular assume the consequences of \cite{BWH25}*{3.1.3} about definability of riso-triviality spaces.
\end{proof}

\subsection{Embedding Independence of Riso-Stratifications}
We recall the technical definition of riso-stratifications involving the shadow iteration construction. Then, we prove embedding independence of this riso-stratification process when applied to algebraic objects. 

Let $K/\kappa$ be an admissible extension over a domain $R$ of characteristic 0. Consider $\mathscr{L}_0$ the language of rings expanded by constants for elements of $R$, $\mathscr{L}$ an expansion of $\mathscr{L}_0$ by a predicate for the valuation ring $\O_K$ and the theory $\mathscr{T}=\mathrm{Th}_\mathscr{L}(K)$. In this setting, we satisfy the assumptions of \cite{BWH25}*{4.1.1} and so we can safely apply their results. 

Let $X$ be an $R$-scheme and $X\hookrightarrow \A_R^n$ a closed embedding. Denote $X_\kappa=X\times_R \kappa$ as a $\kappa$-scheme which is closed in $\A_\kappa^n$. Every fixed coordinate choice for $\A_R^n$ lets us regard $X_\kappa(K)$ as an $\mathscr{L}_0$-definable subset of $K^n$ for which we can run the riso-stratification process. Let us recall this construction in detail:

\begin{definition}\cite{BWH25}*{4.2.3} \label{def: shadow iteration}
    Let $\mathcal{A}$ be a finite tuple of $\mathscr{L}$-definable sets of $K^n$. The \emph{shadow of $\mathcal{A}$} is the partition of $\kappa^n$ into $\mathscr{L}_0$-definable sets $\big(\mathrm{Sh}_{d}(\mathcal{A})\big)_{0\leq d\leq n}$ determined by:
    \begin{align*}
        z\in \mathrm{Sh}_{d}(\mathcal{A})(\kappa)&\iff  \underline{\mathrm{rtd}}_{B_{>0}(z)}(\mathcal{A})=d
    \end{align*}
    \cite{BWH25}*{4.3.1} Set $\mathrm{Sh}_d^0(\mathcal{A}):=\mathcal{A}$ for $0\leq d\leq n$ and define recursively for every $s\geq 1$ the $s$-th shadow of $\mathcal{A}$ via:
    $$\big(\mathrm{Sh}_{d}^s(\mathcal{A})\big)_{0\leq d\leq n}=\bigg(\mathrm{Sh}\Big(\mathcal{A}\cup \big\{\{\big(\mathrm{Sh}_i^{t}(\mathcal{A})\big)(K)\}~|~0\leq i\leq n~,~t<s   \big\}\Big)\bigg)_{0\leq d\leq n}.$$
\end{definition}
\begin{remark}
    The fact that shadow construction produces $\mathscr{L}_0$-definable strata is non-trivial and follows from \cite{BWH25}*{3.1.3}. This property is required for making sense of iterated shadows.
\end{remark}
\begin{remark}
    To each tuple $\mathcal{A}$ of $\mathscr{L}$-definable sets, its shadow produces a stratification of the whole ambient space $\A_\kappa^n$ by $\mathscr{L}_0$-definable pieces. In the case $\mathcal{A}$ is a tuple composed of $\mathscr{L}_0$-definable sets, then the resulting shadow restricts to a geometric stratification of $\mathrm{supp}(\mathcal{A})$ as a $\mathscr{L}_0$-definable set. In order to retrieve this, it is only necessary to forget the strata of dimension $>\dim\big( \mathrm{supp}(\mathcal{A})\big)$.    
\end{remark}

For the given embedding $X\hookrightarrow \A_R^n$ and fixed choice of admissible extension $K/\kappa$ over $R$, the first shadow of the singleton $\mathcal{A}=\{X_\kappa(K)\}$ is already a good approximation of a stratification of $X$. However, this might not be composed of Zariski closed partial unions \cite{BWH25}*{4.2.4}. The process whose iteration can be used to get this desired property is the following:
\begin{definition}(cf. \cite{BWH25}*{4.4.1})\label{def: riso-strat}
    The \emph{riso-stratification of $X$ with respect to $\psi\colon X_\kappa\hookrightarrow\mathbb{A}^n_R$} is the partition of $X$ into reduced constructible strata: $$\big(\mathcal{S}^\psi_d(X_\kappa)\big)_{0\leq d\leq \dim (X/R)}:=\Big(\psi^{-1}\mathrm{Sh}_{d}^n\big(\{\psi_*X(K)\}\big)\Big)_{0\leq d\leq \dim X_\kappa}.$$
    Where $\dim (X/R)=\dim X_\kappa$ denotes the relative dimension of $X\to \spec (R)$. This makes sense since the shadow iteration process returns $\mathscr{L}_0$-definable strata. 
\end{definition}
\begin{remark}\label{rmrk: locally closed shadow is riso-strat}
    It follows from a more detailed description of the shadow construction \cite{BWH25}*{4.4.5}, that the riso-stratification of $X$ with respect to $\psi$ equals its first shadow, if and only if every partial union of strata of the first shadow is Zariski closed on $X$. Moreover, for every $z\in \mathcal{S}^\psi_d(X)(\kappa)$ we always have:
    $$\underline{\mathrm{rtd}}_{B_{>0}(z)}\big(X_\kappa(K)\big) \geq d .$$
\end{remark}
The following Theorem summarizes the main properties which are already known to be satisfied by these embedded riso-stratifications:
\begin{theorem}\label{thm: propiedades de riso-strat}
       The riso-stratification of $X$ with respect to $\psi\colon X\hookrightarrow \A_\kappa^n$ satisfies the properties:
       \begin{enumerate}
           \item Each finite union of strata $\mathcal{S}^\psi_{\leq d}(X)$ is Zariski closed in $X$ \cite{BWH25}*{4.4.11}.\vspace{0.5ex}
           \item each stratum $\mathcal{S}^\psi_d(X)$ is pure of relative dimension $d$ and smooth after base change to $\kappa$ \cite{BWH25}*{4.6.8}.\vspace{0.5ex}
           \item The stratification satisfies Whitney conditions after base change to $\kappa$, in the sense of \cite{BWH25}*{4.6.4}.\vspace{0.5ex}
           \item Local motivic Poincar\'e series are trivial along each stratum after base change to $\kappa$, in the sense of \cite{BWH25}*{5.4.4(2)}\vspace{0.5ex}
           \item It depends only on the $\mathscr{L}_0$-formula that defines $X_\kappa(K)\subset K^n$. That is, the stratification does not depend on the choice of admissible extension $K/\kappa$ over $R$ \cite{BWH25}*{4.1.7}. 
       \end{enumerate}
\end{theorem}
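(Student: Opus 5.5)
The proof will be a transfer argument rather than a new construction. Each of the five items is already established by \cite{BWH25} for the strong shadow iteration of an $\mathscr{L}_0$-definable subset of $K^n$. So what I have to check is that the objects in Definition \ref{def: riso-strat} are exactly the objects to which those results apply, and that their conclusions, stated for definable sets over $K$ or $\kappa$, translate into the scheme-theoretic conclusions above.

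First, I verify the hypotheses of \cite{BWH25}*{4.1.1}. The extension $K/\kappa$ is admissible over $R$, $\mathscr{L}$ is the ring language with constants from $R$ and a predicate for $\O_K$, and $\mathscr{T}=\mathrm{Th}_\mathscr{L}(K)$. This theory is Hensel minimal, since $K$ is algebraically closed, valued, of residue characteristic $0$, so the model-theoretic machinery is available. I then observe that $\psi_*X_\kappa(K)\subset K^n$ is $\mathscr{L}_0$-definable, because it is cut out by generators of $I\subset R[x_1,\dots,x_n]$. By \cite{BWH25}*{3.1.3}, every iterated shadow $\mathrm{Sh}^s_d$ is again $\mathscr{L}_0$-definable. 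Since $\kappa$ is algebraically closed and $\mathscr{L}_0$ is the pure ring language with parameters from $R$, quantifier elimination for algebraically closed fields makes each such set constructible, defined over $\mathrm{Frac}(R)$. Hence $\psi^{-1}\mathrm{Sh}^n_d$ makes sense as a reduced constructible subscheme.

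Second, I quote the cited results item by item. Item (1) is \cite{BWH25}*{4.4.11}: the final iterated shadow has Zariski closed partial unions. Items (2)--(4) are \cite{BWH25}*{4.6.8}, \cite{BWH25}*{4.6.4} and \cite{BWH25}*{5.4.4(2)}, applied to $X_\kappa(K)$. Item (5) is \cite{BWH25}*{4.1.7}. The only subtlety is that the iteration must stabilise by step $n$; I take this from \cite{BWH25}*{4.3}, where the number of steps is bounded by the ambient dimension.

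The main obstacle is the compatibility between the model-theoretic setting and our arcs. The cited theorems use the strong riso-triviality dimension $\underline{\mathrm{rtd}}$ on balls $B_{>0}(z)$, whereas this paper works with $\mathrm{rtd}$ on $\O_K$-balls. Lemma \ref{lemma: risometrias coinciden} and Corollary \ref{Cor: rtd son iguales} identify the two, using the remark that $B_{\A^n_\kappa,z}=B_{>0}(z)$. This identification is what later allows the statement to be rephrased intrinsically. For the theorem as stated, however, no rephrasing is needed: the strata are literally those of \cite{BWH25}, and the proof reduces to citing the listed results after the hypotheses above are checked.
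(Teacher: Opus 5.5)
Your proposal is correct and matches the paper's approach. The paper gives no argument beyond noting that the admissible setting, with $\mathscr{L}_0$, $\mathscr{L}$ and $\mathscr{T}=\mathrm{Th}_\mathscr{L}(K)$, satisfies the hypotheses of \cite{BWH25}*{4.1.1}, and then attaching the cited results of \cite{BWH25} to each item, exactly as you do. Your extra remarks on the $\mathscr{L}_0$-definability of the iterated shadows and on the comparison via Corollary \ref{Cor: rtd son iguales} are consistent with the paper but, as you say yourself, not needed for this statement.
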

Now, we combine this construction with our methods to detect embedding independence in a strong sense:

\begin{theorem}
\label{Thm: Local Riso-Strat Commutes}
Let $Y$, $Z$ be affine $R$-schemes and $f\colon Y\to Z$ be a smooth morphism of relative dimension $N$. Fix two closed embeddings $\psi_1\colon Y\hookrightarrow \A_R^{n_1} $ and $\psi_2\colon Z\hookrightarrow \A_R^{n_2}$, and consider the associated riso-stratifications $\big(\mathcal{S}^{\psi_1}_d(Y)\big)_{0\leq d\leq \dim (Y/R)}$ and $\big(\mathcal{S}^{\psi_2}_{\rho}(Z)\big)_{0\leq \rho\leq \dim (Z/R)}$. Then, we have $\mathcal{S}_r^{\psi_1}(Y)=\varnothing$ for $r<N$ and for every $0\leq \rho \leq \dim (Z/R)$ we have:
$$\mathcal{S}^{\psi_1}_{N+\rho}(Y)= f^{-1} \mathcal{S}^{\psi_2}_{\rho}(Z),$$
as locally closed subsets of $Y$. In particular, the embedded riso-stratification of $Z$ determines the embedded riso-stratification of $Y$.
\end{theorem}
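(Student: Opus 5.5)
The plan is to reduce everything to a pointwise statement about riso-triviality spaces of tuples at closed points, and then run an induction along the shadow iteration of Definition \ref{def: shadow iteration}. By Theorem \ref{thm: propiedades de riso-strat}(5) we may fix one admissible extension $K/\kappa$ over $R$ and work throughout with $\kappa$-points of $Y_\kappa$ and $Z_\kappa$. By Corollary \ref{Cor: rtd son iguales}, each strong riso-triviality dimension $\underline{\mathrm{rtd}}_{B_{>0}(z)}$ appearing in the iterated shadows equals our $\mathrm{rtd}_{B}$ on $\O_K$-tuples. By Proposition \ref{Prop: Correspondence of rtspaces} this value is intrinsic to the infinitesimal ball $B_{Y_\kappa,y}$, and it depends only on the formal germ by Corollary \ref{Cor: rtd invariante formal}. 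The embeddings $\psi_1,\psi_2$ therefore drop out once we prove the following. For every $y\in Y(\kappa)$ with $z=f(y)$, and for every tuple $\mathcal{A}$ of constructible subsets of $Z_\kappa$, we have
$$\mathrm{rtd}_{B_{Y,y}}\big(f^{-1}\mathcal{A}(\O_K)\big)=\mathrm{rtd}_{B_{Z,z}}\big(\mathcal{A}(\O_K)\big)+N.$$

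The key local input is the structure of smooth morphisms. Since $f$ is smooth of relative dimension $N$ and $\kappa$ is algebraically closed, there is an isomorphism of complete local rings $\widehat{\O_{Y,y}}\simeq\widehat{\O_{Z,z}}\llbracket u_1,\dots,u_N\rrbracket$ compatible with $f$. Formally, then, $Y$ at $y$ is the germ $Z\times\A^N$ at $(z,0)$. Under the induced bijection on balls, $f^{-1}\mathcal{A}$ corresponds to $\mathcal{A}\times\A^N$. Choose a closed embedding of $Z$ near $z$ into $V\simeq\A^{n_2}$ and set $V^\perp=\A^N$. Corollary \ref{Cor: rtd invariante formal} transports the riso-triviality dimension of $f^{-1}\mathcal{A}$ on $B_{Y,y}$ to that of $\mathcal{A}\times V^\perp$ on the ball of $V\oplus V^\perp$ at $(z,0)$. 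Corollary \ref{Cor: riso-triv en familias triviales} then gives exactly the additive shift by $\dim V^\perp=N$. One point needs care: the formal isomorphism must send $(f^{-1}\mathcal{A})(\O_K)\cap B_{Y,y}$ onto $(\mathcal{A}\times V^\perp)(\O_K)\cap B$. This holds because the isomorphism lies over $\widehat{\O_{Z,z}}$, so arcs are sent to arcs with the same image in $Z$.

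Next comes the induction on $s$. Let $\mathcal{P}_s(Z)$ denote the set of all shadows $\mathrm{Sh}^t_i$ with $t<s$, taken together with $Z$. The hypothesis is that the tuple $\mathcal{P}_s(Y)$ equals $f^{-1}$ of $\mathcal{P}_s(Z)$, after reindexing the shadows by $i\mapsto i+N$, restricted to $Y$. Remark \ref{rmrk: locally closed shadow is riso-strat} and the remark on forgetting strata of dimension larger than the support allow us to restrict attention to points of $Y$ and $Z$. Applying the key identity to the tuple $\mathcal{P}_s(Z)$ gives $\mathrm{Sh}^s_{N+\rho}(Y)=f^{-1}\mathrm{Sh}^s_\rho(Z)$, together with $\mathrm{Sh}^s_r(Y)\cap Y=\varnothing$ for $r<N$. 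The base case $s=1$ is the same identity with $\mathcal{A}=\{Z\}$. Since $\dim(Y/R)=\dim(Z/R)+N$ and the procedure stabilizes, we take the final iteration index in Definition \ref{def: riso-strat} large enough for both, and the claim follows. Descent from $\kappa$ to $R$ holds because both sides are constructible and agree on $\kappa$-points, with $f^{-1}$ compatible with base change.

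The hardest step will be the bookkeeping in the induction. The shadows $\mathrm{Sh}^t_i(Y)$ are subsets of the ambient $\A^{n_1}_\kappa$, not only of $Y$. Strata living off $Y$ do not meet $B_{Y,y}$, so they contribute empty components after restriction; I expect this to follow from Proposition \ref{Prop: Correspondence of rtspaces}, since only $\mathcal{A}|_B$ for balls centred on $Y$ enters. I also need Corollary \ref{Cor: riso-triv en familias triviales} for tuples, not single sets, and arbitrary closed balls should not be needed. If that corollary turns out to require $\mathrm{supp}$ to be valuatively closed, I will use the fact that $Z(\O_K)\cap B$ is closed and that every tuple element is contained in it.
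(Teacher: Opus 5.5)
Your proposal is correct and follows essentially the paper's route. The paper runs strong induction on the shadow iteration, converts each strong riso-triviality dimension via Corollary \ref{Cor: rtd son iguales}, Proposition \ref{Prop: Correspondence of rtspaces} and Corollary \ref{Cor: rtd invariante formal} to the formal product structure $\widehat{\O_{Y,y}}\simeq\widehat{\O_{Z,z}}\llbracket u_1,\dots,u_N\rrbracket$, and concludes with Corollary \ref{Cor: riso-triv en familias triviales}; you do the same, and you even state the shift in the correct direction ($+N$ on the $Y$ side), whereas the paper's display \eqref{eq: ecuacion smooth morph} has it reversed.

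The one point to tighten is the ambient top stratum $\mathrm{Sh}^t_{n_1}$, which does meet the ambient ball $B_{>0}(y)$ off $Y$. It is not removed by Proposition \ref{Prop: Correspondence of rtspaces}, which assumes the tuple consists of subsets of $Y(\O_K)$. Instead, intersect every tuple member with $Y$. This leaves strong riso-triviality unchanged, because $Y$ itself is a member of the tuple, all lower strata already lie in $Y$, and the complement in a ball of a $W$-translation-invariant set is again $W$-translation-invariant. The paper passes over this step silently.
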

\begin{proof}
    For the fixed admissible extension $K/\kappa$ over $R$, we consider the base changes $Y_\kappa$ and $Z_\kappa$ and regard $Y_\kappa(K)\subset K^{n_1}$ and $Z_\kappa(K)\subset K^{n_2}$ via the respective embeddings. We prove the Proposition via verifying the following relation:
    \begin{equation}\label{eq: shadow induction}
        \mathrm{Sh}^s_{N+\rho}\big((\psi_1)_*(Y_\kappa)(K)\big)= f^{-1} \mathrm{Sh}^s_{\rho}\big((\psi_2)_*(Z_\kappa)(K)\big),
    \end{equation}
    for every $s\geq 0$ and $0\leq \rho\leq \dim Z$, by strong induction on $s$. From there, the stabilizing property \cite{BWH25}*{4.3.1} would conclude the result. For the base case, we have only one type of stratum and trivially for the non-stratified $\mathscr{L}_0$-definable sets $Y=f^{-1}Z$. Now, assume that (\ref{eq: shadow induction}) holds up to a fixed $s\geq 0$ and consider the tuples: 
    \begin{align*}
        \mathcal{A}_1&=\big((\psi_1)_*Y_\kappa(K),\mathrm{Sh}^1_0((\{(\psi_1)_*Y_\kappa(K)\})(K),\dots ,\mathrm{Sh}^s_{n_1}(\{(\psi_1)_*Y_\kappa(K)\})(K)\big)\\
        \mathcal{A}_2&=\big((\psi_2)_*Z_\kappa(K),\mathrm{Sh}^1_0((\{(\psi_2)_*Z_\kappa(K)\})(K),\dots ,\mathrm{Sh}^s_{n_2}(\{(\psi_2)_*Z_\kappa(K)\})(K)\big)
    \end{align*} of $K^{n_1}$ and $K^{n_2}$ respectively. We take them such that next shadows in question can be detected via $\kappa$-points as follows:
    \begin{align*}
    y\in \mathrm{Sh}^{s+1}_{d}\big((\psi_1)_*Y_\kappa\big)(\kappa)&\iff  \underline{\mathrm{rtd}}_{B_{>0}(y)}(\mathcal{A}_1)=d\\
    z\in \mathrm{Sh}^{s+1}_{\rho}\big((\psi_2)_*Z_\kappa\big)(\kappa)&\iff  \underline{\mathrm{rtd}}_{B_{>0}(z)}(\mathcal{A}_2)=\rho
    \end{align*}
    In this way, it suffices to check that for every $z=f(y)\in Z_\kappa(\kappa)$ with $y\in Y_\kappa(\kappa)$ there is an equality of riso-triviality dimensions as follows:
    \begin{equation}\label{eq: ecuacion smooth morph}
        N + \underline{\mathrm{rtd}}_{B_{>0}(y)}(\mathcal{A}_1)= \underline{\mathrm{rtd}}_{B_{>0}(z)}(\mathcal{A}_2).
    \end{equation}
    This can be computed from our notion of riso-triviality dimensions applied to non-embedded $\O_K$-tuples. Indeed, it follows from Corollary \ref{Cor: rtd son iguales} that we have:
    \begin{align*}
        \underline{\mathrm{rtd}}_{B_{>0}(y)}(\mathcal{A}_1)&=\mathrm{rtd}_{B_{\A_\kappa^{n_1},y}}({\mathcal{A}}_1)=\mathrm{rtd}_{B_{Y_\kappa,y}}({\mathcal{A}}_1)\\
        \underline{\mathrm{rtd}}_{B_{>0}(z)}(\mathcal{A}_2)&=\mathrm{rtd}_{B_{\A_\kappa^{n_2},z}}(\mathcal{A}_2)=\mathrm{rtd}_{B_{Z_\kappa,z}}({\mathcal{A}}_2)
    \end{align*}
    where the equalities on the right, reflecting the intrinsicness of these invariants, come from Proposition \ref{Prop: Correspondence of rtspaces}. In this way, we get reduced via Corollary \ref{Cor: rtd invariante formal} to a verification of equation (\ref{eq: ecuacion smooth morph}) at the formal neighborhood level. Assume that $z\in Z(\kappa)\subset \A_\kappa^{n_2}(\kappa)$ is the origin. Recall that $\kappa$ being algebraically closed and $f$ being a smooth induces after base change an isomorphism: $$\widehat{\O_{Z,z}}\llbracket l_1,\dots,l_N\rrbracket \simeq \widehat{\O_{Y,y}}.$$
    Without loss of generality $n_2\gg N$ and there are linear coordinates $l_1,\dots,l_{n_1}$ such that $f^\#$, the local ring homomorphism induced by $f$ after base change along $\kappa$, fits in a commutative diagram as follows:
    \[\begin{tikzcd}
	{\kappa\llbracket l_1,\dots,l_{n_2}\rrbracket} && {\widehat{\O_{Z,z}}} \\
	& {\widehat{\O_{Z,z}}\llbracket l_1,\dots,l_N\rrbracket}
	\arrow[two heads, from=1-1, to=1-3]
	\arrow[two heads, from=1-1, to=2-2]
	\arrow["{f^\#}"', from=2-2, to=1-3]
\end{tikzcd}\]
    In this situation, the induction hypothesis (\ref{eq: shadow induction}) holding for each $s'\leq s$ and the above formal isomorphism can be used to regard $\mathcal{A}_1$ as a $\O_K$-tuple of $\A_k^{n_2}$ for which:
    $$ {\mathcal{A}}_1=  \{A + W(\O_K)~|~ A\in{\mathcal{A}}_2\},$$
    where $W=\spec(\kappa[l_1,\dots,l_N])$ as a $\kappa$-linear subspace of $\A_\kappa^{n_2}$ and $\gamma$ is the common radius of ${\mathcal{A}}_1$ and ${\mathcal{A}}_2$. Therefore, we are allowed to use Corollary \ref{Cor: riso-triv en familias triviales} which verifies equation (\ref{eq: ecuacion smooth morph}) and finishes the induction.
\end{proof}
\begin{corollary}\label{cor: riso-strat no depende delembedding}
    Let $X$ be an affine $R$-scheme. There is a unique riso-stratification of $X$, which is stable by scalar extensions. That is, for every pair of closed embeddings $\psi_1\colon X\hookrightarrow \A_k^{n_1}$ and $\psi_2\colon X\hookrightarrow \A_k^{n_2}$ we have:
    $$\big(\mathcal{S}^{\psi_1}_d(X)\big)_{0\leq d\leq \dim (X/R)}=\big(\mathcal{S}^{\psi_2}_d(X)\big)_{0\leq d\leq \dim (X/R)},$$
    component-wise as locally closed subsets of $X$. Furthermore, for every ring extension $R\subset S$ we have:
    $$\Big(\mathcal{S}^{\psi_1}_d\big(X_S\big)\Big)_{0\leq d\leq \dim (X_S/S)} = \Big(\mathcal{S}^{\psi_1}_r\big(X\big)\times_R\spec(S)\Big)_{0\leq d\leq \dim (X_S/S)} ,$$
    component-wise, as locally closed subsets of $X_S=X\times_R\spec(S)$.
\end{corollary}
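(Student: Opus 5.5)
The first claim (embedding independence) will come from Theorem \ref{Thm: Local Riso-Strat Commutes} applied to the identity. Take $Y=Z=X$ and $f=\mathrm{id}_X$, which is smooth of relative dimension $N=0$. Use the embeddings $\psi_1\colon X\hookrightarrow\A_R^{n_1}$ for $Y$ and $\psi_2\colon X\hookrightarrow \A_R^{n_2}$ for $Z$. The theorem then gives
$$\mathcal{S}^{\psi_1}_{\rho}(X)=\mathrm{id}^{-1}\mathcal{S}^{\psi_2}_{\rho}(X)=\mathcal{S}^{\psi_2}_{\rho}(X)$$
for every $0\leq \rho\leq \dim(X/R)$, as locally closed subsets of $X$, and this is exactly the first displayed equality. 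Nothing is needed beyond checking that the theorem's hypotheses allow the two embeddings to be arbitrary and unrelated. They do: in its proof the comparison goes through the formal germs via Corollary \ref{Cor: rtd invariante formal} and Proposition \ref{Prop: Correspondence of rtspaces}, which never use any compatibility between $\psi_1$ and $\psi_2$. So uniqueness of the riso-stratification of an affine $R$-scheme follows, and we may write $\mathcal{S}_d(X)$ without reference to $\psi$.

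For stability under a ring extension $R\subset S$, I will use the definability statement, Theorem \ref{thm: propiedades de riso-strat}(5). The stratification depends only on the $\mathscr{L}_0$-formula defining $X_\kappa(K)$, and not on the admissible extension. I assume $S$ is again a domain of characteristic $0$, which is implicit since the procedure is only defined over such rings. Choose an algebraically closed field $\kappa$ containing $\mathrm{Frac}(S)$, and an admissible $K/\kappa$ over $S$. This $K/\kappa$ is automatically admissible over $R$, because $\mathrm{Frac}(R)\subset\mathrm{Frac}(S)\subset\kappa$. The base change of $\psi_1$ gives an embedding $X_S\hookrightarrow\A_S^{n_1}$ with $(X_S)_\kappa=X_\kappa$ as subschemes of $\A_\kappa^{n_1}$, so both constructions produce the same definable set $X_\kappa(K)\subset K^{n_1}$. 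The formula over $R$ is also an $\mathscr{L}_0(S)$-formula. The shadows $\mathrm{Sh}^s_d$ are defined pointwise through $\underline{\mathrm{rtd}}_{B_{>0}(z)}$, a quantity that depends only on the set in $K^{n}$ and not on the constant language. Hence the iterated shadows computed over $S$ and over $R$ coincide as subsets of $\kappa^{n_1}$, and therefore the strata of $(X_S)_\kappa$ are the base changes to $\kappa$ of the strata of $X$.

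The last step, and the one I expect to be the main obstacle, is to descend this equality of $\kappa$-points to an equality of locally closed subsets of $X_S$. The plan is to use that each $\mathcal{S}_d(X)$ is constructible over $R$ (by $\mathscr{L}_0$-definability), so $\mathcal{S}_d(X)\times_R\spec(S)$ is a constructible subset of $X_S$. Two constructible subsets of $X_S$ are equal once they are equal after base change to every algebraically closed field over $S$. I would argue this point by point: for each point $p\in\spec S$, rerun the argument above with $\kappa$ an algebraic closure of $\kappa(p)$ and $R$ replaced by the appropriate localization or quotient. Independence of the choice of $K$ (Theorem \ref{thm: propiedades de riso-strat}(5)) guarantees that every such choice computes the same strata. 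If the pointwise version over non-generic points of $\spec S$ proves awkward, I would fall back to arguing that both sides are the unique constructible sets defined by the same formula, using uniqueness in step (3) of the riso-stratification procedure.
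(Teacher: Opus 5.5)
Your argument is essentially the paper's. Embedding independence is Theorem~\ref{Thm: Local Riso-Strat Commutes} applied to $\mathrm{id}_X$ with $N=0$. Scalar extension follows by using Theorem~\ref{thm: propiedades de riso-strat}(5) to pick a tower $K/\kappa/\mathrm{Frac}(S)/\mathrm{Frac}(R)$, so that $(X_S)_\kappa(K)=X_\kappa(K)\subset K^{n}$ and the iterated shadows coincide.

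The paper stops at that point, because the strata are by definition $\mathscr{L}_0$-definable sets determined by their $\kappa$-points; this is essentially your fallback via step (3). Your pointwise plan over the non-generic primes of $\spec S$ is not needed, and it would not work as stated. Those residue fields can have positive characteristic, where the procedure is not defined, and no specialization result relating the strata of $X$ to those of its fibers is available.
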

\begin{proof}
    The independence of embedding follows from applying Theorem \ref{Thm: Local Riso-Strat Commutes} to the identity morphism $\mathrm{id}_{X}$. The scalar extension compatibility is already known for a fixed embedding $\psi\colon X\hookrightarrow \A_R^n$: We may assume that the field $\mathrm{Frac}(S)$ sits in a tower $K/\kappa/\mathrm{Frac}(S)/\mathrm{Frac}(R)$ from the very start, according to part (5) of Theorem \ref{thm: propiedades de riso-strat}. Then, we have trivially for the input of the riso-stratification:
    $(X_S)_\kappa(K)=X_\kappa(K)\subset K^n,$
    and so the $\kappa$-points of the resulting pieces of the riso-stratification of $X_S$ are $\mathscr{L}_0$-definable.
\end{proof}

\begin{remark}
    We are proving a slightly stronger fact: Each iterated shadow on the construction of the riso-stratification satisfies the properties of Corollary \ref{cor: riso-strat no depende delembedding}.
\end{remark}

\subsection{Global and Functorial Riso-Stratifications}
We are now in position to define a canonical and global riso-stratification process. We perform this over a field $k$ of characteristic 0. The strength of the model theoretic results of \cite{BWH25}, together with our previous results, will be converted to functorial properties satisfied by this global procedure.  
\begin{definition}
    Let $X$ be a $k$-scheme. For every $0\leq d\leq \dim X$, the \emph{$d$-th riso-stratifying sheaf of $X$} is the sheaf of ideals defined by:
$$\mathscr{I}_{X,d}(U):=I_{\mathcal{S}^{\psi}_{\leq d}(U)}\subset \O_X(U),$$
for every affine open $U\subset X$, fixed closed embedding $\psi\colon U\hookrightarrow\A_\kappa^n$ and admissible extension $K/\kappa$ over $k$. This association produces radical ideals which are well defined because of Corollary \ref{cor: riso-strat no depende delembedding} and properties (2) and (5) in Theorem \ref{thm: propiedades de riso-strat}. The definition of $\mathscr{I}_{X,d}$ at affine opens indeed determines a unique sheaf on $X$ since compatibility with restrictions follows from Theorem \ref{Thm: Local Riso-Strat Commutes}.
\end{definition}
To every $k$-scheme $X$ we are associating a canonical nested sequence of sheaf of ideals:
\begin{equation*}
    \mathscr{I}_{X,\dim X}\subseteq\dots\subseteq \mathscr{I}_{X,0}
\end{equation*}
This is exactly what we use to globalize riso-stratifications.
\begin{definition}
    The \emph{riso-stratification of $X$ over $k$} is the canonical tuple: $$\mathcal{S}_\bullet(X/k)=\{\mathcal{S}_d(X/k)\}_{0\leq d\leq \dim X},$$ of locally closed sub-schemes of $X$ defined by  $\mathcal{S}_0(X/k)=V\big(\mathscr{I}_{X,0}\big)$ and recursively for $0<d\leq \dim X$:
    $$\mathcal{S}_d(X/k)= V(\mathscr{I}_{X,d})\setminus V(\mathscr{I}_{X,{d-1}}).$$
\end{definition}
\begin{remark}
Since this is just a globalization of the procedure introduced in \cite{BWH25}, every local property from Theorem \ref{thm: propiedades de riso-strat} is inherited for free to the general construction. In particular, each stratum $\mathcal{S}_d(X/k)$ is either empty or smooth of pure dimension $d$, $\mathcal{S}_{\dim X}(X/k)=X^\mathrm{sm}$.
\end{remark}
\begin{remark}
    Riso-stratifications are compatible with restrictions to subschemes of the same dimension. That is, if $Y\subset X$ is a subscheme, of finite type over $k$ and of pure dimension $\dim Y=\dim X$, then $\mathcal{S}_d(X/k)|_Y=\mathcal{S}_d(Y/k)$ for every $0\leq d \leq \dim Y$. This follows easily from Theorem \ref{Thm: Local Riso-Strat Commutes}. This can be used to extend the definition to schemes which are essentially of finite type over $k$. 
\end{remark}
We finalize our work by summarizing the functorial properties that this globalized stratification process satisfies:
\begin{theorem}\label{thm: riso-strat are funcotiral}
    Riso-stratifications are compatible with base change along smooth morphisms and field extensions. That is, for every smooth morphism of $f\colon Y\to X$ over $k$ of relative dimension $N$, we have $\mathcal{S}_{r}(Y/k)=\varnothing$ for $r<N$ and: 
        $$ \mathcal{S}_{N+d}(Y/k)=f^{-1}\mathcal{S}_d(X/k),$$
        for every $0\leq d\leq \dim X$. In particular, riso-stratifications commute with \'etale localizations.
    Similarly, for every field extension $F/k$ we have:
        $$\mathcal{S}_d(X_F/F)=\mathcal{S}_d(X/k)\times_k\spec(F),$$
        for every $0\leq d\leq \dim X$, as $F$-schemes.
\end{theorem}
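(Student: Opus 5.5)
The plan is to reduce both assertions to the affine statements already proved, namely Theorem \ref{Thm: Local Riso-Strat Commutes} and Corollary \ref{cor: riso-strat no depende delembedding}. This works because the global strata are defined through the ideal sheaves $\mathscr{I}_{X,d}$, and these are built on affine opens. Equalities of locally closed subschemes can be checked on an open cover. Both sides are reduced, since the $\mathscr{I}_{X,d}$ are radical and $f^{-1}$ of a reduced locally closed subscheme along a smooth map is reduced. So it suffices to prove the equalities on the underlying sets, and this can be done affine-locally.

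\textbf{Smooth base change.} Let $f\colon Y\to X$ be smooth of relative dimension $N$. First I would cover $X$ by affine opens $U$, and cover each $f^{-1}(U)$ by affine opens $V$. Then I would choose closed embeddings $\psi_1\colon V\hookrightarrow \A_k^{n_1}$ and $\psi_2\colon U\hookrightarrow\A_k^{n_2}$. The restriction $f|_V\colon V\to U$ is a smooth morphism of affine $k$-schemes, so Theorem \ref{Thm: Local Riso-Strat Commutes}, applied with $R=k$, gives
\[
\mathcal{S}^{\psi_1}_r(V)=\varnothing \quad (r<N), \qquad \mathcal{S}^{\psi_1}_{N+\rho}(V)=(f|_V)^{-1}\mathcal{S}^{\psi_2}_\rho(U).
\]
The partial unions satisfy $\mathcal{S}_{\leq N+\rho}=f^{-1}\mathcal{S}_{\leq\rho}$. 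Taking vanishing ideals, this yields $\mathscr{I}_{Y,N+\rho}|_V=\sqrt{f^{-1}\mathscr{I}_{X,\rho}\cdot\O_V}$ and $\mathscr{I}_{Y,r}|_V=\O_V$ for $r<N$. By Corollary \ref{cor: riso-strat no depende delembedding}, these local descriptions do not depend on the embeddings chosen, so they glue. Subtracting consecutive closed sets then gives $\mathcal{S}_{N+d}(Y/k)=f^{-1}\mathcal{S}_d(X/k)$. Étale morphisms are the case $N=0$.

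\textbf{Field extension.} Let $F/k$ be a field extension. I would cover $X$ by affine opens $U$ with embeddings $\psi\colon U\hookrightarrow\A^n_k$, so that the $U_F$ are affine opens of $X_F$ with embeddings $\psi_F$. I would then apply the scalar-extension part of Corollary \ref{cor: riso-strat no depende delembedding} with $R=k\subset S=F$. This gives $\mathcal{S}^{\psi_F}_d(U_F)=\mathcal{S}^{\psi}_d(U)\times_k\spec F$ as locally closed subsets. In characteristic $0$, $F/k$ is separable, so the base change of a reduced scheme stays reduced, and the equality of ideal sheaves $\mathscr{I}_{X_F,d}=\mathscr{I}_{X,d}\otimes_kF$ follows. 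The global statement then follows by gluing as before.

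\textbf{Main obstacle.} The step I expect to need the most care is the gluing compatibility: the locally defined ideals must agree on overlaps of affine opens that are not standard with respect to each other. I would handle this by applying Theorem \ref{Thm: Local Riso-Strat Commutes} to open immersions, which are smooth of relative dimension $0$. This is applied to $U\cap U'\hookrightarrow U$ after further covering by affines. Otherwise the argument is formal bookkeeping.
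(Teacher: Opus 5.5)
Your proposal is correct and follows the paper's proof: reduce to the affine case, then invoke Theorem~\ref{Thm: Local Riso-Strat Commutes} for smooth base change and the scalar-extension part of Corollary~\ref{cor: riso-strat no depende delembedding} for field extensions. Your extra remarks make explicit what the paper leaves implicit: reducedness is preserved by smooth pullback, and in characteristic $0$ by base field extension; and gluing works via open immersions viewed as smooth morphisms of relative dimension $0$, which is exactly how the paper justifies that the $\mathscr{I}_{X,d}$ form a sheaf.
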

\begin{proof}
    Both properties are local on $X$, so we may take $X$ and $Y$ to be affine. The first assertion follows easily from Theorem \ref{Thm: Local Riso-Strat Commutes} under any local embedding, and the second assertion is a direct consequence of the scalar extension compatibility on Corollary \ref{cor: riso-strat no depende delembedding}.
\end{proof}

\begin{theorem}\label{Prop: Speading properties}
    Riso-stratifications have good spreading properties. More precisely, if $X$ is a $k$-scheme and $R$ is a domain such that $k=\mathrm{Frac}(R)$ and $\tilde{X}$ is an $R$-scheme such that $X\simeq \tilde{X}\times_R \spec(k)$, then each stratum $\mathcal{S}_d(X/k)\subset X$ is the base change of a uniquely determined locally closed subscheme ${\mathcal{S}}_d(\tilde{X})$ that is of finite presentation over $R$.
\end{theorem}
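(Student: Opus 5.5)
The plan is to build the strata over $R$ by running the embedded riso-stratification procedure directly on $\tilde X$, and then to glue the local pieces using the relative versions of embedding independence and scalar extension compatibility established above.

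First I reduce to the affine case. Cover $\tilde X$ by finitely many affine opens $\tilde U_i$, each of finite presentation over $R$, and choose closed embeddings $\psi_i\colon \tilde U_i\hookrightarrow \A_R^{n_i}$ with finitely generated radical ideals. Since $\tilde X$ is an $R$-scheme in the sense of our conventions (reduced, finitely presented, pure relative dimension), the riso-stratification procedure applies to $\psi_i$. It yields strata $\mathcal{S}^{\psi_i}_d(\tilde U_i)$ as in Definition \ref{def: riso-strat}, using an admissible extension $K/\kappa$ over $R$.

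Next I check that these pieces are constructible and descend to $R$. By Theorem \ref{thm: propiedades de riso-strat}(5) the output depends only on the $\mathscr{L}_0$-formula defining $\tilde U_{i,\kappa}(K)$, and $\mathscr{L}_0$ has constants only for elements of $R$. The strata are therefore $\mathscr{L}_0$-definable subsets of $\kappa^{n_i}$ defined by formulas with parameters in $R$. Because $\kappa$ is algebraically closed, quantifier elimination in $\mathrm{ACF}_0$ turns each formula into a finite Boolean combination of polynomial equations with coefficients in $R$. This gives a constructible subset of $\tilde U_i$, which I take with its reduced subscheme structure. By Theorem \ref{thm: propiedades de riso-strat}(1) the partial unions are closed, so each stratum is locally closed. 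Each stratum is also of finite presentation over $R$, being cut out by finitely many equations over $R$ inside a finitely presented scheme.

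Then I glue. Corollary \ref{cor: riso-strat no depende delembedding} says the strata of $\tilde U_i$ are independent of $\psi_i$. Theorem \ref{Thm: Local Riso-Strat Commutes}, applied to the open immersions $\tilde U_i\cap\tilde U_j\hookrightarrow \tilde U_i$ (smooth of relative dimension $0$, after covering intersections by affines), shows the strata agree on overlaps. Hence they glue to locally closed subschemes $\mathcal{S}_d(\tilde X)$ that are determined by $\tilde X$ alone. Applying the scalar extension part of Corollary \ref{cor: riso-strat no depende delembedding} to $R\subset k$ gives $\mathcal{S}_d(\tilde X)\times_R\spec(k)=\mathcal{S}_d(X/k)$, since the riso-stratification of $\tilde U_{i,k}$ is the one defining $\mathcal{S}_d(X/k)$ on $U_i$.

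The main obstacle is uniqueness, together with making precise that the descended locally closed set is a subscheme whose base change is exactly the reduced stratum. A locally closed subscheme of $\tilde X$ is not determined by its generic fibre: one can alter it over a proper closed subset of $\spec R$. I therefore read "uniquely determined" as meaning canonically produced by the procedure, namely independent of the embeddings and of $K$ by the results above. The scheme-theoretic compatibility I would handle using the reducedness in Theorem \ref{thm: propiedades de riso-strat}(2): the base change of a reduced constructible piece to the generic fibre stays reduced in characteristic $0$, so it equals $\mathcal{S}_d(X/k)$ as a subscheme.
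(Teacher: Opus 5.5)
Your skeleton is the paper's own proof: reduce to affine $\tilde X$, run the procedure on $\tilde X$ over $R$, and apply the scalar-extension part of Corollary \ref{cor: riso-strat no depende delembedding} to $R\subset k$. The problem is the step you add to turn the $R$-level strata into subsets of $\tilde U_i$, which does not support what you then claim.

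An $\mathscr{L}_0$-definable set is determined only by its $\kappa$-points. Since $R\to\kappa$ is injective, all of these lie over the generic point of $\spec R$. So the constructible subset of $\tilde U_i$ you get from quantifier elimination depends on which quantifier-free formula you pick. For example, for $\tilde X=\A^1_{\mathbb{Z}}$ the single stratum $\mathcal{S}_1$ is defined both by $x=x$ and by $2\neq 0\vee x=0$. The second formula gives $D(2)\cup V(x)\subset\A^1_{\mathbb{Z}}$, which is not even locally closed.

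Theorem \ref{thm: propiedades de riso-strat}(1), Theorem \ref{Thm: Local Riso-Strat Commutes} and Corollary \ref{cor: riso-strat no depende delembedding} are all established at the level of $\kappa$-points, so they only control generic fibres. They therefore show neither of the two things you need:
\begin{itemize}
\item that your chosen pieces are locally closed in $\tilde U_i$;
\item that the pieces chosen on $\tilde U_i$ and on $\tilde U_j$ agree on $\tilde U_i\cap\tilde U_j$, so the gluing is unfounded.
\end{itemize}
For the same reason your fallback reading of ``uniquely determined'' as ``canonically produced by the procedure'' does not hold. The procedure canonically produces only the generic fibre, which is $\mathcal{S}_d(X/k)$ itself. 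You are right that literal uniqueness fails. The paper's one-line proof leaves this implicit too, since it treats the $R$-level strata of Definition \ref{def: riso-strat} as well-defined locally closed subsets of $\tilde X$.

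To close the gap you need an actual canonical choice over $R$. The natural one is:
\begin{itemize}
\item let $\mathcal{S}_{\leq d}(\tilde X)$ be the closure of $\mathcal{S}_{\leq d}(X/k)$ in $\tilde X$, with reduced structure;
\item set $\mathcal{S}_d(\tilde X):=\mathcal{S}_{\leq d}(\tilde X)\setminus\mathcal{S}_{\leq d-1}(\tilde X)$.
\end{itemize}
This is defined directly on all of $\tilde X$, so no gluing is required, and the pieces are locally closed by construction. Their generic fibres are the $\mathcal{S}_d(X/k)$, because $X=\tilde X\times_R\spec(k)\to\tilde X$ is a localization and hence a homeomorphism onto its image.

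Finite presentation is then automatic for Noetherian $R$. For general $R$ it needs an extra argument, and the same is true of your own passage to reduced structures. Alternatively, you can keep arbitrary finitely presented models and assert uniqueness only after replacing $R$ by some localization $R_f$, using standard limit arguments.

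Either way, the model-theoretic input is needed only to produce $\mathcal{S}_\bullet(X/k)$. Running the procedure on $\tilde X$ over $R$ contributes nothing beyond the generic fibre.
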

\begin{proof}
    This is a local property, so we can assume that $X$ is affine. In this case, the result is just a consequence of the scalar extension compatibility property of Corollary \ref{cor: riso-strat no depende delembedding} applied to the local riso-stratification of $\tilde{X}$ as an $R$-scheme.
\end{proof}

\section{Examples and Further Questions}

It is in general a highly non-trivial task to find the riso-stratification of a given $k$-scheme. As pointed out in \cite{BWH25}*{1.2}, there is not a clear path for avoiding the model-theoretic arguments that guarantee definability of the shadow constructions. That is to say, riso-stratifications \ref{def: riso-strat} being well defined is a highly non-trivial fact, which so far is only verified by the model theoretic side. Nonetheless, it is possible to compute this for some particularly simple geometric objects, following the general techniques used to deal with riso-triviality conditions.

For the following computations, we fix $k=\kappa$ and algebraically closed field of arbitrary characteristic and consider an admissible extension $K/\kappa$ over $k$.

\begin{example}\label{ex: cusp}
    Consider the cusp $C=\spec \big(k[x,y]/(y^2-x^3)\big)$. We claim that:
    \begin{align*}
    \mathcal{S}_1(C/k)&=\mathrm{Sh}^1_1\big(C(K)\big)=C_\mathrm{sm}\\
        \mathcal{S}_0(C/k)&=\mathrm{Sh}^1_0\big(C(K)\big)=C_\mathrm{sing}
    \end{align*}
    In other words, the first shadow of $C$ already stabilizes and computes the riso-stratification of $C$. 
\end{example}
\begin{proof}
    It follows from Corollary \ref{cor: smooth implies riso-trivial} and Remark \ref{rmrk: locally closed shadow is riso-strat} that $C_\mathrm{sm}\subset \mathcal{S}_1(C/k)$. On the other hand, for the singular point $x\in C$ we get from Theorem \ref{Prop: invariant image in the cone} that $\mathrm{rtsp}_x(C,\O_K)$ is either $W=\{y=0\}$ or trivial. The first case can be dismissed via Proposition \ref{Prop: Improved riso}, since the central fiber of $(\pi_W|_{B_{C,x}})\colon B_{C,x}\to W(\O_K)$ is the only one composed of a single arc. This confirms the first shadow computation, which coincides with the riso-stratification since it defines locally closed strata.
\end{proof}

\begin{example}\cite{Lipman:2000}*{1.c}
    Consider the surface $X=\spec\big(k[x,y,z]/(f)\big)$, where $f=x(x-y)(x+y)(xz-y)$. We claim that:
    \begin{align*}
        \mathcal{S}_2(X/k)&=\mathrm{Sh}^2_1\big(X(K)\big)=X_\mathrm{sm}\\\mathcal{S}_1(X/k)&=\mathrm{Sh}^1_1\big(X(K)\big)=X_\mathrm{sing}\setminus\{p_1,p_2,p_3\}\\
        \mathcal{S}_0(X/k)&=\mathrm{Sh}^1_0\big(X(K)\big)=\{p_1,p_2,p_3\}
    \end{align*}
    where $p_1=(0,0,0),~p_2=(0,0,-1),~p_3=(0,0,1)$ as closed points of $X$.
\end{example}
\begin{proof}
It follows from Corollary \ref{cor: smooth implies riso-trivial} and Remark \ref{rmrk: locally closed shadow is riso-strat} that $X_\mathrm{sm}\subset \mathcal{S}_2(X/k)$. Notice that $V=\{x=y=0\}=X_\mathrm{sing}\subset \A_k^3$, and $X(K)$ is at most $V$-riso-trivial on $B_{X,x_0}$ for a singular point $x_0\in X_\mathrm{sing}$ according to \ref{Prop: invariant image in the cone}. Consider $x_0=(0,0,z_0)\in X_\mathrm{sing}(k)\setminus \{p_1,p_2,p_3\}$, we have an embedded set-theoretic description:
\begin{align}\label{eq: decomp-example}
    B_{X,x_0}=B_{H_1,x_0}\cup B_{H_2,x_0}\cup B_{H_3,x_0}\cup B_{S,x_0},
\end{align}
where $H_1=\{x=0\},H_2=\{x-y=0\}, H_3=\{x+y=0\}, S=\{xz=y\}\subset \A_k^3$. Using this decomposition, it is possible to define the function:
\begin{align*}
    \varphi\colon B_{X,x_0}&\longrightarrow \A_k^3(\O_K)\\
    \alpha=(\alpha_1,\alpha_2,\alpha_3)&\longmapsto \begin{cases}
        (\alpha_1,\alpha_1z_0,\alpha_3)~,~\text{if $\alpha\in S(\O_K)$}\\
        \alpha~,~\text{else.}
    \end{cases}
\end{align*}
It is an exercise to prove that this map defines a risometry to its image, which is $V$-translation-invariant on $B_{\A_k^3,x}$. This confirms $X_\mathrm{sing}\setminus\{p_1,p_2,p_3\}\subset \mathcal{S}_1(X/k)$. The same decomposition (\ref{eq: decomp-example}) can be used to detect impossibility of a risometry as in Proposition \ref{Prop: Improved riso} adapted to the projection $\pi_V\colon B_{X,p_i}\to V(\O_K)$. Indeed, there are more rv-classes of differences away from the central fiber. This confirms that $\{p_1,p_2,p_3\}\subset \mathcal{S}_0(X/k)$, and concludes that the first shadow has locally closed strata, hence it already computes the riso-stratification.
\end{proof}

\begin{example}[Whitney Umbrella] Consider the surface $X=\spec\big(k[x,y,z]/(g)\big)$, where $g=x^2-y^2z$. We claim that:
    \begin{align*}
        \mathcal{S}_2(X/k)&=\mathrm{Sh}^2_1\big(X(K)\big)=X_\mathrm{sm}\\\mathcal{S}_1(X/k)&=\mathrm{Sh}^1_1\big(X(K)\big)=X_\mathrm{sing}\setminus\{p\}\\
        \mathcal{S}_0(X/k)&=\mathrm{Sh}^1_0\big(X(K)\big)=\{p\}
    \end{align*}
    where $p=(0,0,0)$ a closed point of $X$.
\end{example}
\begin{proof}
    It follows from Corollary \ref{cor: smooth implies riso-trivial} and Remark \ref{rmrk: locally closed shadow is riso-strat} that $X_\mathrm{sm}\subset \mathcal{S}_2(X/k)$. Notice that $V=\{x=y=0\}=X_\mathrm{sing}\subset \A_k^3$, and $X(K)$ is at most $V$-riso-trivial on $B_{X,x_0}$ for a singular point $x_0\in X_\mathrm{sing}$ according to Proposition \ref{Prop: invariant image in the cone}. Now, let $\pi_V\colon B_{X,x_0}\to V(\O_K)$ be the projection that forgets the $x,y$-coordinates. Let $x_0=(0,0,z_0)\in X_\mathrm{sing}(k)\setminus \{p\}$, notice that we have an embedded set-theoretic description:
\begin{align}\label{eq: decomp-example2}
    \pi_V^{-1}(v)= B_{S_{v},v} \cup B_{H_{v},v}
\end{align}
for every $v\in B_{V,x_0}$, where $S_{v}=\{x+\omega y=0\}, ~H_{v}=\{x-\omega y=0\}\subset \A_k^3$ and $\omega\in R$ is a square root of $v$ (which exists since $K$ is algebraically closed, and $v\neq 0$). Moreover, if we set $\omega_0\in K$ a square root of $z_0$, then we can choose $\omega$ uniformly for each $v\in B_{V,x_0}$ in the sense of $S_{v}$ and $H_v$ corresponding respectively to $S_{x_0}$ and $H_{x_0}$ in terms of the rv-classes they define when taking differences to its image by $\pi_V$. From this, we construct the following function:
\begin{align*}
    \varphi\colon B_{X,x_0}&\longrightarrow \A_k^3(\O_K)\\
    \alpha=(\alpha_1,\alpha_2,\alpha_3)&\longmapsto \begin{cases}
        (-\alpha_1\omega_0,\alpha_2,\alpha_3)~,~\text{if $\alpha\in B_{S_{\alpha_3},\alpha_3}$}\\
        (\alpha_1\omega_0,\alpha_2,\alpha_3)~,~\text{if $\alpha\in B_{H_{\alpha_3},\alpha_3}$}.
    \end{cases}
\end{align*}
It is an exercise to check that this defines a risometry to its image $\pi^{-1}_V(x_0)+B_{V,x_0}$. This confirms $X_\mathrm{sing}\setminus\{p\}\subset \mathcal{S}_1(X/k)$. Finally, this same decomposition (\ref{eq: decomp-example2}) can be used to measure the fibers of $\pi_V\colon B_{X,p}\to V(\O_K)$, and check that the central fiber has strictly less rv-classes of differences than any other fiber, hence Proposition \ref{Prop: Improved riso} cannot hold. Therefore, $\{p\}\subset \mathcal{S}_0(X/k)$ which concludes that the first shadow has locally closed strata, hence it already computes the riso-stratification.
\end{proof}

The existing examples in algebraic situations suggest that the iterated shadow procedure, defined in \ref{def: riso-strat}, stabilizes at the first application for affine $k$-schemes. This is proved to be false for strictly semi-algebraic sets \cite{BWH25}*{4.2.4}. However, in our more restrictive algebraic situation, the geometric interpretation of a shadow not-stabilizing seems difficult to be achieved and so the situation is not so clear. In particular, the following question would be helpful for clarifying this situation:
\begin{Question}\label{Question 1}
    Is $\mathrm{rtd}_x(X;\O_K)\geq r$ an open condition on $X$?  
\end{Question}

We see from the previous examples that the first shadow construction, which always makes sense as a partition of $k$-points via our general definitions, does return constructible strata defining a reasonable stratification for simple singularities in char $p>0$. It is very natural to ask if this holds in more generality. The following question is one of the first that should be answered in this direction of testing riso-stratifications in char $p$:          
\begin{Question}\label{Question 2}
    Does Corollary \ref{Cor: rtdsp independent of K} hold in arbitrary characteristic?
\end{Question}


\bigskip
\end{document}